\setlist[itemize]{leftmargin=2em}
\numberwithin{equation}{section}
\crefname{equation}{Eq.\!\!}{Eq.\!\!}
\crefname{assumption}{A.\!\!}{A.\!\!}
\crefname{definition}{D.\!\!}{D.\!\!}
\crefname{lemma}{L.\!\!}{L.\!\!}
\crefname{proposition}{P.\!\!}{P.\!\!}
\crefname{theorem}{T.\!\!}{T.\!\!}
\crefname{corollary}{C.\!\!}{C.\!\!}
\crefname{example}{E.\!\!}{E.\!\!}
\crefname{remark}{R.\!\!}{R.\!\!}
\newtheorem{definition}{Definition}[section]
\newtheorem{assumption}[definition]{Assumption}
\newtheorem{lemma}[definition]{Lemma}
\newtheorem{theorem}[definition]{Theorem}
\newtheorem{corollary}[definition]{Corollary}
\newtheorem{example}[definition]{Example}
\newtheorem{remark}[definition]{Remark}
\newcommand{\ifNotEmpty}[2]{\ifthenelse{\equal{#1}{}}{}{#2}}
\newcounter{repeatCharCounter}
\newcommand{\repeatChar}[2]{\setcounter{repeatCharCounter}{0}\whiledo{\value{repeatCharCounter}<#1}{#2\stepcounter{repeatCharCounter}}}
\renewcommand{\phi}{\varphi}
\renewcommand{\ell}{l}
\renewcommand{\epsilon}{\varepsilon}
\renewcommand{\*}{\hspace{-0.15em}\cdot\hspace{-0.15em}}
\newcommand{\cleq}{\lesssim}
\newcommand{\cgeq}{\gtrsim}
\newcommand{\ceq}{\eqsim}
\renewcommand{\(}{\bigg(}
\renewcommand{\)}{\bigg)}
\newcommand{\set}[1]{\{#1\}}
\newcommand{\Set}[2]{\{#1\,|\,#2\}}
\newcommand{\Pow}[1]{\mathrm{Pow}(#1)}
\newcommand{\N}{\mathbb{N}}
\newcommand{\R}{\mathbb{R}}
\newcommand{\Ball}[3][]{\mathrm{Ball}_{#1}(#2,#3)}
\newcommand{\closureN}[1]{\overline{#1}}
\newcommand{\convexhullN}[1]{\mathrm{conv} #1}
\newcommand{\convexhull}[1]{\mathrm{conv}(#1)}
\newcommand{\boundaryN}[1]{\oldpartial #1}
\newcommand{\inflateN}[3][]{{#2}^{#3}_{#1}}
\newcommand{\inflate}[3][]{(#2)^{#3}_{#1}}
\newcommand{\cardN}[1]{\# #1}
\newcommand{\meas}[2][]{|#2|_{#1}}
\newcommand{\diam}[2][]{\mathrm{diam}_{#1}(#2)}
\newcommand{\dist}[3][]{\mathrm{dist}_{#1}(#2,#3)}
\newcommand{\supp}[2][]{\mathrm{supp}_{#1}(#2)}
\newcommand{\restrictN}[2]{#1|_{#2}}
\newcommand{\restrict}[2]{(#1)|_{#2}}
\newcommand{\fDef}[3]{#1: #2 \longrightarrow #3}
\newcommand{\fDefB}[5][]{
\ifNotEmpty{#1}{#1:}
\left\{
\begin{array}{ccc}
#2 & \longrightarrow & #3 \\
#4 & \longmapsto & #5
\end{array}
\right.
}
\newcommand{\identity}{\mathrm{id}}
\newcommand{\kronecker}[1]{\delta_{#1}}
\newcommand{\Landau}[1]{\mathcal{O}(#1)}
\let\oldprime\prime
\renewcommand{\prime}[2]{(#2)^{\repeatChar{#1}{\oldprime}}}
\let\oldpartial\partial
\renewcommand{\partial}[3][]{\oldpartial_{#2}^{\ifNotEmpty{#1}{(#1)}}(#3)}
\newcommand{\gradN}[2][]{\nabla_{#1} #2}
\newcommand{\grad}[2][]{\nabla_{#1}(#2)}
\renewcommand{\div}[2][]{\mathrm{div}_{#1}(#2)}
\newcommand{\I}[4][]{\int\displaylimits_{#2}^{#1} #3 \,\mathrm{d}#4}
\newcommand{\abs}[1]{|#1|}
\newcommand{\seminorm}[2][]{|#2|_{#1}}
\newcommand{\norm}[2][]{\|#2\|_{#1}}
\newcommand{\normB}[2][]{\bigg\|#2\bigg\|_{#1}}
\newcommand{\skalar}[3][]{\langle#2,#3\rangle_{#1}}
\newcommand{\bilinear}[3][]{#1(#2,#3)}
\newcommand{\ran}[1]{\mathrm{ran}(#1)}
\newcommand{\rank}[1]{\mathrm{rank}(#1)}
\newcommand{\mvemph}[1]{\boldsymbol{#1}}
\renewcommand{\det}[1]{\mathrm{det}(#1)}
\newcommand{\spanN}[1]{\mathrm{span}\,#1}
\renewcommand{\dim}[1]{\mathrm{dim}(#1)}
\newcommand{\dimN}[1]{\mathrm{dim}\,#1}
\newcommand{\Ck}[2]{C^{#1}(#2)}
\newcommand{\CkPw}[2]{C^{#1}_{\mathrm{pw}}(#2)}
\newcommand{\lp}[2]{\ell^{#1}(#2)}
\newcommand{\Lp}[2]{L^{#1}(#2)}
\newcommand{\Hk}[2]{H^{#1}(#2)}
\newcommand{\HkO}[3][]{H^{#2}_{0}(#3 \ifNotEmpty{#1}{,} #1)}
\newcommand{\Wkp}[3]{W^{#1,#2}(#3)}
\newcommand{\Pp}[2]{\mathbb{P}^{#1}(#2)}
\newcommand{\PpO}[2]{\mathbb{P}^{#1}_0(#2)}
\newcommand{\Skp}[3]{\mathbb{S}^{#2,#1}(#3)}
\newcommand{\SkpO}[4][]{\mathbb{S}^{#3,#2}_{0}(#4 \ifNotEmpty{#1}{,} #1)}
\newcommand{\SHarm}[1]{\mathbb{S}_{\mathrm{harm}}(#1)}
\newcommand{\Elements}{\mathcal{T}}
\newcommand{\ElementsB}{\mathcal{B}}
\newcommand{\ElementsS}{\mathcal{S}}
\newcommand{\Nodes}{\mathcal{N}}
\newcommand{\Patch}[2]{#1(#2)}
\newcommand{\h}[1]{h_{#1}}
\newcommand{\hMin}[1]{h_{\min\ifNotEmpty{#1}{,}#1}}
\newcommand{\hMax}[1]{h_{\max\ifNotEmpty{#1}{,}#1}}
\newcommand{\Tree}[2][]{\mathbb{T}_{#2}^{\ifNotEmpty{#1}{(#1)}}}
\newcommand{\depth}[1]{\mathrm{depth}(#1)}
\newcommand{\BPart}{\mathbb{P}}
\newcommand{\BPartAdm}{\mathbb{P}_{\mathrm{adm}}}
\newcommand{\BPartSmall}{\mathbb{P}_{\mathrm{small}}}
\newcommand{\HMatrices}[2]{\mathcal{H}(#1,#2)}
\newcommand{\CSparse}{\sigma_{\mathrm{sparse}}} 
\newcommand{\CPcre}{\sigma_{\mathrm{Pcr}}}      
\newcommand{\CShape}{\sigma_{\mathrm{shp}}}     
\newcommand{\CShapeTilde}{\widetilde{\sigma}_{\mathrm{shp}}}     
\newcommand{\CCard}{\sigma_{\mathrm{card}}}     
\newcommand{\CAdm}{\sigma_{\mathrm{adm}}}       
\newcommand{\CSmall}{\sigma_{\mathrm{small}}}   
\newcommand{\CExp}{\sigma_{\mathrm{exp}}}       
\newcommand{\CRed}{\sigma_{\mathrm{red}}}
\newcommand{\CStab}{\sigma_{\mathrm{stab}}}
\newcommand{\SolutionOp}[1]{S_{#1}}
\newcommand{\CutoffFc}[2]{\kappa_{#1}^{#2}}
\newcommand{\CutoffOp}[2]{K_{#1}^{#2}}
\newcommand{\CoarseningOp}[2]{Q_{#1}^{#2}}
\begin{document}

\title{Exponential meshes and $\mathcal{H}$-matrices}
\author[N. Angleitner, M. Faustmann, J.M. Melenk]{Niklas Angleitner, Markus Faustmann, Jens Markus Melenk}
\address{Institute for Analysis and Scientific Computing (Inst. E 101), Vienna University of Technology, Wiedner Hauptstrasse 8-10, 1040 Wien, Austria}
\email{niklas.angleitner@tuwien.ac.at, markus.faustmann@tuwien.ac.at, melenk@tuwien.ac.at}
\date{\today}
\subjclass[2010]{Primary: 65F50, Secondary: 65F30, 65N30}   
\keywords{FEM, H-matrices, Approximability, Non-uniform meshes}

\begin{abstract}
In \cite{Angleitner_H_matrices_FEM}, we proved that the inverse of the stiffness matrix of an $h$-version finite element method (FEM) applied to scalar second order elliptic boundary value problems can be approximated at an exponential rate in the block rank by $\mathcal{H}$-matrices. Here, we improve on this result in multiple ways: (1) The class of meshes is significantly enlarged and includes certain exponentially graded meshes. (2) The dependence on the polynomial degree $p$ of the discrete ansatz space is made explicit in our analysis. (3) The bound for the approximation error is sharpened, and (4) the proof is simplified.
\end{abstract}

\maketitle

\section{Introduction}

${\mathcal H}$-matrices were introduced by W.~Hackbusch in \cite{hackbusch99} as a data-sparse matrix format 
of blockwise low-rank matrices. A particular feature of the ${\mathcal H}$-matrix format is that 
that it comes with an arithmetic that includes the approximate addition, multiplication, and inversion in logarithmic-linear
complexity; we refer to \cite{Hackbusch_Hierarchical_matrices, boermbook, GH03, grasedyck01, bebendorfbook} for a detailed 
discussion of the algorithmic aspects of ${\mathcal H}$-matrices. 
{}A large class of matrices can be represented or at least approximated well in the ${\mathcal H}$-matrix format. 
Discretizations of differential equations can typically be presented exactly, and the matrices from the discretization
of integral operators with so-called asymptotically smooth kernels, which forms a large class of practically relevant
integral operators, can be approximated with an error that is exponentially small in the block rank. Given that 
${\mathcal H}$-matrices come with an approximate arithmetic, it is important to understand, for which matrices that can be 
approximated well in that format, also their inverses can be approximated well. It is the purpose of the present paper
to study this question for matrices arising from Galerkin discretizations of second order elliptic equations on 
strongly graded meshes. 

The question of ${\mathcal H}$-matrix approximability of the inverses of matrices arising in the finite element method (FEM) has attracted 
some attention  in the past. The first results \cite{BebendorfHackbusch03} for scalar elliptic problem 
and \cite{bebendorf2009parallel} for the time-harmonic Maxwell system showed the existence of locally separable approximations 
of the Green's function and inferred from that the approximability of the inverses of the FEM matrices by 
${\mathcal H}$-matrices via a final projection step. This approach generalizes to certain classes of pseudodifferential
operators, \cite{doelz-harbrecht-schwab17}, and results in exponential convergence in the block rank up the 
final projection error. A fully discrete approach, which avoids the final projection steps and leads to exponential convergence in the block rank, was taken
in \cite{Faustmann_H_matrices_FEM,faustmann2021mathcalhmatrix} in a FEM setting on quasi-uniform meshes and in the boundary element method 
(BEM) in \cite{FMP16,FMP17,FMP21}. The generalization of \cite{Faustmann_H_matrices_FEM} to non-uniform meshes 
was achieved in \cite{Angleitner_H_matrices_FEM} for low order FEM on certain classes of meshes that includes 
algebraically graded meshes. In the present work, we generalize \cite{Angleitner_H_matrices_FEM} in several directions: 
first, we admit a larger class of meshes that includes certain shape-regular meshes that are graded exponentially towards
a lower-dimensional manifold. In particular, we can show exponential approximability in the block rank for 
the inverses of FEM matrices arising in variants of the boundary concentrated FEM, \cite{Melenk_Boundary_concentrated_FEM}. 
Second, our analysis is explicit in the polynomial degree $p$. 
For our $p$-explicit analysis, we develop polynomial-preserving lifting and polynomial projection operators on 
simplices in arbitrary spatial dimension. Such operators, generalizing the projection-based operators 
of \cite{demkowicz-babuska03,demkowicz-buffa05,demkowicz-cao05,demkowicz08,Melenk_Rojik}, which were restricted to spatial dimensions $d \in \{1,2,3\}$, are of independent interest. Third, on a more technical level, we remove the condition of \cite[D.2.4]{Angleitner_H_matrices_FEM} on the relation between
the minimal and the maximal mesh size and the maximal element (see \cref{SSec_Proof_overview} for details).

We follow the notation of \cite{Angleitner_H_matrices_FEM}. In particular, we write $a \cleq b$, if there exists a constant $C>0$, such that $a \leq Cb$. The constant $C$ may depend on the space dimension $d$, the problem domain $\Omega$, the PDE coefficients $a_1,a_2,a_3$, the shape-regularity constant $\CShape$, the admissibility constant $\CAdm$ or the sparsity constant $\CSparse$. However, it may \emph{not} depend on the polynomial degree $p$.

\section{Main results} \label{Sec_Main_results}
\subsection{The model problem} \label{SSec_Model_problem}

We investigate the following \emph{model problem}: Let $d \geq 1$ and $\Omega \subseteq \R^d$ be a bounded polyhedral Lipschitz domain. Furthermore, let $a_1 \in \Lp{\infty}{\Omega,\R^{d \times d}}$, $a_2 \in \Lp{\infty}{\Omega,\R^d}$ and $a_3 \in \Lp{\infty}{\Omega,\R}$ be given coefficient functions and $f \in \Lp{2}{\Omega}$ be a given right-hand side. We seek a weak solution $u \in \HkO{1}{\Omega}$ to the following equations:
\begin{equation*}
\begin{array}{rcll}
-\div{a_1 \gradN{u}} + a_2 \cdot \gradN{u} + a_3u &=& f & \text{in} \,\, \Omega, \\
u &=& 0 & \text{on} \,\, \boundaryN{\Omega}.
\end{array}
\end{equation*}

We assume that $a_1$ is coercive in the sense $\skalar{a_1(x) y}{y} \geq \alpha_1 \norm[2]{y}^2$
for all $x \in \Omega$, $y \in \R^d$ and some constant 
$\alpha_1 > \CPcre^2 (\norm[\Lp{\infty}{\Omega}]{a_2} + \norm[\Lp{\infty}{\Omega}]{a_3}) \geq 0$. 
Here, $\CPcre>0$ denotes the constant in the Poincar\'e inequality $\norm[\Hk{1}{\Omega}]{\cdot} \leq \CPcre \seminorm[\Hk{1}{\Omega}]{\cdot}$ on $\HkO{1}{\Omega}$.

\begin{definition} \label{Bilinear_form}
We introduce the following bilinear form:
\begin{equation*}
\forall u,v \in \HkO{1}{\Omega}: \quad \quad \bilinear[a]{u}{v} := \skalar[\Lp{2}{\Omega}]{a_1 \gradN{u}}{\gradN{v}} + \skalar[\Lp{2}{\Omega}]{a_2 \* \gradN{u}}{v} + \skalar[\Lp{2}{\Omega}]{a_3 u}{v}.
\end{equation*}
\end{definition}

The weak formulation of the \emph{model problem} reads as follows: Find $u \in \HkO{1}{\Omega}$ such that
\begin{equation*}
\forall v \in \HkO{1}{\Omega}: \quad \quad \bilinear[a]{u}{v} = \skalar[\Lp{2}{\Omega}]{f}{v}.
\end{equation*}

The assumptions on the PDE coefficients imply that the bilinear form $\bilinear[a]{\cdot}{\cdot}$ is continuous and coercive. In particular, the well-known Lax-Milgram Lemma yields the existence of a unique solution $u \in \HkO{1}{\Omega}$.

\subsection{The spline spaces}

For the discretization of the model problem, we introduce the well-known spline spaces $\SkpO{1}{p}{\Elements} \subseteq \HkO{1}{\Omega}$, where $\Elements$ is a \emph{mesh} on $\Omega$ and $p \geq 1$ is a prescribed polynomial degree. 

\begin{definition} \label{Mesh}
A finite set $\Elements \subseteq \Pow{\Omega}$ is a \emph{mesh}, if there exists an open simplex $\hat{T} \subseteq \R^d$ (the \emph{reference element}) such that every \emph{element} $T \in \Elements$ is of the form $T = F_T(\hat{T})$, where $\fDef{F_T}{\R^d}{\R^d}$ is an affine diffeomorphism. Furthermore, the elements must be pairwise disjoint, i.e., $\meas{T \cap S} = 0$ for all $T \neq S \in \Elements$, and constitute a partition of $\Omega$, i.e., $\bigcup_{T \in \Elements} \overline{T}  = \overline{\Omega}$. Finally, a mesh must be regular in the sense of \cite{Ciarlet_FEM_Meshes}, i.e., it does not contain any hanging nodes.
\end{definition}

For every element $T \in \Elements$, we define the \emph{patch} $\Patch{\Elements}{T} := \Set{S \in \Elements}{\closureN{S} \cap \closureN{T} \neq \emptyset}$. To measure the size of an element $T \in \Elements$, we introduce the local \emph{mesh width} $\h{T} := \sup_{x,y \in T} \norm[2]{y-x}$. Similarly, for every collection of elements $\ElementsB \subseteq \Elements$, we set $\hMax{\ElementsB} := \max_{T \in \ElementsB} \h{T}$ and $\hMin{\ElementsB} := \min_{T \in \ElementsB} \h{T}$. In the case $\ElementsB = \Elements$, we abbreviate $\hMax{} := \hMax{\Elements}$ and $\hMin{} := \hMin{\Elements}$.

For every $T \in \Elements$, we denote the center of the largest inscribable ball by $x_T \in T$ (the \emph{incenter}). Here, $\Ball[2]{x}{r} := \Set{y \in \R^d}{\norm[2]{y-x}<r}$ is the open ball with radius $r>0$, centered around $x \in \R^d$.

\begin{assumption} \label{Shape_regularity}
We assume that $\Elements$ is part of a \emph{shape-regular} family of meshes, i.e., there exists a constant $\CShape \geq 1$ such that
\begin{equation*}
\forall T \in \Elements: \quad \quad \Ball[2]{x_T}{\CShape^{-1} \h{T}} \subseteq T \subseteq \bigcup\Patch{\Elements}{T} \subseteq \Ball[2]{x_T}{\CShape \h{T}}.
\end{equation*}
\end{assumption}

Let us next give a formal definition of the spline spaces.

\begin{definition} \label{Space_Skp}
We set
\begin{equation*}
\SkpO{1}{p}{\Elements} := \Set{v \in \HkO{1}{\Omega}}{\forall T \in \Elements: v \circ F_T \in \Pp{p}{\hat{T}}},
\end{equation*}
where $\Pp{p}{\hat{T}} := \spanN{\Set{\hat{T} \ni x \mapsto x^q}{\norm[1]{q} \leq p}}$ denotes the usual space of polynomials of (total) degree $p$ on the reference element $\hat{T}$. Similarly, we set
\begin{equation*}
\Skp{0}{p}{\Elements} := \Set{v \in \Lp{2}{\Omega}}{\forall T \in \Elements: v \circ F_T \in \Pp{p}{\hat{T}}}.
\end{equation*}
\end{definition}

\begin{remark}
Note that the polynomial degree $p$ is the same for all elements of the mesh $\Elements$. In contrast, in the $hp$-version of the FEM, a polynomial degree distribution $\Set{p_T}{T \in \Elements}$ is prescribed. In this context, $p$ may be regarded as the maximum of these values. 
The analysis of a general polynomial degree distribution is beyond the scope of the present work, and we focus on the uniform polynomial degree distribution. 
\end{remark}

The following definition introduces the bases of $\SkpO{1}{p}{\Elements}$ that we consider.

\begin{definition} \label{Basis_fcts}
Let $N := N(\Elements,p) := \dimN{\SkpO{1}{p}{\Elements}}$ and let $\set{\phi_1,\dots,\phi_N} \subseteq \SkpO{1}{p}{\Elements}$ be a basis. We say that the basis \emph{allows for a system of local dual functions}, if there exist functions $\set{\lambda_1,\dots,\lambda_N} \subseteq \Lp{2}{\Omega}$ with the following properties:
\begin{enumerate}
\item Duality: For all $n,m \in \set{1,\dots,N}$, there holds $\skalar[\Lp{2}{\Omega}]{\phi_n}{\lambda_m} = \kronecker{nm}$ (Kronecker delta).

\item Stability: There exist constants $C_{\rm stab}$, $\CStab>0$ such that
\begin{equation*}
\forall \mvemph{x} \in \R^N: \quad \quad \normB[\Lp{2}{\Omega}]{\sum_{m=1}^{N} \mvemph{x}_m \lambda_m} \leq C_{\rm stab} p^{\CStab} \hMin{}^{-d/2} \norm[2]{\mvemph{x}}.
\end{equation*}

\item Locality and overlap: For every $n \in \set{1,\dots,N}$, there exists a characteristic element $T_n \in \Elements$ such that $\supp{\lambda_n} \subseteq \bigcup\Patch{\Elements}{T_n}$. For all $T \in \Elements$, there holds the bound $\cardN{\Set{n}{T_n = T}} \leq \binom{p+d}{d}$.

\end{enumerate}
\end{definition}

\begin{example}
Typically, a finite element basis $\set{\phi_1,\dots,\phi_N} \subseteq \SkpO{1}{p}{\Elements}$ is constructed from a predefined basis of \emph{shape functions} $\Set{\hat{\phi}_i}{i=1,\dots,\binom{d+p}{d}} \subseteq \Pp{p}{\hat{T}}$ on the reference element $\hat{T} \subseteq \R^d$. Following \cite[Sec.3.3]{Angleitner_H_matrices_FEM}, we can then build the dual functions $\set{\lambda_1,\dots,\lambda_N} \subseteq \Lp{2}{\Omega}$ from the \emph{dual shape functions} $\Set{\hat{\lambda}_j}{j=1,\dots,\binom{d+p}{d}} \subseteq \Pp{p}{\hat{T}}$, which are defined via the conditions $\skalar[\Lp{2}{\hat{T}}]{\hat{\phi}_i}{\hat{\lambda}_j} = \kronecker{ij}$. However, since we want to include the case $p \rightarrow \infty$ in our analysis, the standard Lagrange basis has to be replaced with a basis with good stability properties in $p$.

In $d=2$ space dimensions, for example, we can pick the shape functions $\hat{\phi}_i$ from \cite[D.2.4.]{Melenk_hp_bases}. 
It was shown in \cite[L.4.4.]{Melenk_hp_bases} that the corresponding coordinate mapping $\hat{\Phi}c := \sum_{i} c_i \hat{\phi}_i$ exhibits the stability bounds $p^{-3}\norm[2]{c} \cleq \norm[\Lp{2}{\hat{T}}]{\hat{\Phi}c} \cleq \norm[2]{c}$ for all $c \in \R^{(p+2)(p+1)/2}$. In particular, using the Euclidean unit vectors $e_i \in \R^{(p+2)(p+1)/2}$, we also get a stability bound for the dual shape functions $\hat{\lambda}_j$:
\begin{equation*}
\norm[\Lp{2}{\hat{T}}]{\hat{\lambda}_j}^2 = \skalar[\Lp{2}{\hat{T}}]{\hat{\Phi}\hat{\Phi}^{-1}\hat{\lambda}_j}{\hat{\lambda}_j} = \sum_i \skalar[2]{\hat{\Phi}^{-1}\hat{\lambda}_j}{e_i} \skalar[\Lp{2}{\hat{T}}]{\hat{\phi}_i}{\hat{\lambda}_j} = \skalar[2]{\hat{\Phi}^{-1}\hat{\lambda}_j}{e_j} \leq \norm[2]{\hat{\Phi}^{-1}\hat{\lambda}_j} \cleq p^3\norm[\Lp{2}{\hat{T}}]{\hat{\lambda}_j},
\end{equation*}
so that $\norm[\Lp{2}{\hat{T}}]{\hat{\lambda}_j} \cleq p^3$. Updating the proof of \cite[L.3.6]{Angleitner_H_matrices_FEM}, we find that the stability bound in \cref{Basis_fcts} is satisfied with $\CStab = d/2+3 = 4$.

Finally, let us motivate the assumption $\cardN{\Set{n}{T_n = T}} \leq \binom{p+d}{d}$ from \cref{Basis_fcts}: The previously mentioned construction in \cite[Sec.3.3]{Angleitner_H_matrices_FEM} guarantees that not only $\supp{\lambda_n} \subseteq \bigcup\Patch{\Elements}{T_n}$, but even $\supp{\lambda_n} = T_n$. Furthermore, owing to item $(1)$ in \cref{Basis_fcts}, the system $\set{\lambda_1,\dots,\lambda_N} \subseteq \Lp{2}{\Omega}$ is linearly independent. Then, given an arbitrary element $T \in \Elements$, the system $\Set{\restrictN{\lambda_n}{T}}{n \in \set{1,\dots,N} \,\,\text{with}\,\, T_n = T} \subseteq \Pp{p}{T}$ must be linearly independent as well. It follows that $\cardN{\Set{n}{T_n = T}} \leq \dim{\Pp{p}{T}} = \binom{d+p}{d}$, i.e., the overlap condition is fulfilled.
\end{example}

The supports of the dual functions play an import role in our analysis. It pays off to introduce some names:
\begin{definition} \label{Dual_supports}
For all $n \in \set{1,\dots,N}$ and all $I \subseteq \set{1,\dots,N}$, we set
\begin{equation*}
\Omega_n := \supp{\lambda_n} \subseteq \R^d, \quad \quad \quad \Omega_I := \bigcup_{n \in I} \Omega_n \subseteq \R^d.
\end{equation*}
\end{definition}

\subsection{The system matrix} \label{SSec_System_matrix}

Now that the spline spaces $\SkpO{1}{p}{\Elements} \subseteq \HkO{1}{\Omega}$ are at our disposal, the \emph{discrete model problem} reads as follows: For given $f \in \Lp{2}{\Omega}$, find $u \in \SkpO{1}{p}{\Elements}$ such that
\begin{equation*}
\forall v \in \SkpO{1}{p}{\Elements}: \quad \quad \bilinear[a]{u}{v} = \skalar[\Lp{2}{\Omega}]{f}{v}.
\end{equation*}

Again, existence and uniqueness of a solution $u \in \SkpO{1}{p}{\Elements}$ follow from the Lax-Milgram Lemma.

As usual, given a basis of the ansatz space, the discrete model problem can be rephrased as an equivalent linear system of equations. The bilinear form $\bilinear[a]{\cdot}{\cdot}$ from \cref{Bilinear_form} and the basis functions $\phi_n \in \SkpO{1}{p}{\Elements}$ from \cref{Basis_fcts} compose the governing system matrix.

\begin{definition} \label{System_matrix}
We define the system matrix
\begin{equation*}
\mvemph{A} := (\bilinear[a]{\phi_n}{\phi_m})_{m,n=1}^{N} \in \R^{N \times N}.
\end{equation*}
\end{definition}

Note that the unique solvability of the discrete model problem already ensures that the matrix $\mvemph{A}$ is invertible.

\subsection{Hierarchical matrices} \label{SSec_Hierarchical_matrices}

In this section, we provide the basic definitions from the theory of hierarchical matrices. We slightly divert from \cite[Section 2.5]{Angleitner_H_matrices_FEM} and use the formulation from our previous work on radial basis functions, \cite[Section 2.4]{Angleitner_H_matrices_RBF}. As will be discussed later in \cref{SSec_Proof_overview}, a formulation in terms of axes-parallel boxes $B \subseteq \R^d$, rather than collections of elements $\ElementsB \subseteq \Elements$, has certain advantages. An extensive discussion of hierarchical matrices can be found, e.g., in the books \cite{Hackbusch_Hierarchical_matrices,bebendorfbook,boermbook}.

\begin{definition} \label{Box}
A subset $B \subseteq \R^d$ is called \emph{(axes-parallel) box}, if it has the form $B = \bigtimes_{i=1}^{d} (a_i,b_i)$ with $a_i < b_i$.
\end{definition}

For the next definition, we remind the reader of the subsets $\Omega_I \subseteq \R^d$, introduced in \cref{Dual_supports}. Furthermore, we use the usual definition of Euclidean diameter and distance of subsets $B,B_1,B_2 \subseteq \R^d$, i.e.,
\begin{equation*}
\diam[2]{B} := \sup_{x,y \in B} \norm[2]{x-y}, \quad \quad \quad \dist[2]{B_1}{B_2} := \inf_{x \in B_1, y \in B_2} \norm[2]{x-y}.
\end{equation*}

\begin{definition} \label{Block_partition}
Let $\CSmall,\CAdm>0$. A tuple $(I,J)$ with $I,J \subseteq \set{1,\dots,N}$ is called \emph{small}, if there holds $\min\set{\cardN{I}, \cardN{J}} \leq \CSmall$. It is called \emph{admissible}, if there exist boxes $B_I,B_J \subseteq \R^d$ such that $\Omega_I \subseteq B_I$, $\Omega_J \subseteq B_J$ and
\begin{equation*}
\diam[2]{B_I} \leq \CAdm \dist[2]{B_I}{B_J}.
\end{equation*}

A set $\BPart$ of tuples $(I,J)$ with $I,J \subseteq \set{1,\dots,N}$ is called \emph{sparse hierarchical block partition}, if the following assumptions are satisfied:
\begin{enumerate}
\item The system $\Set{I \times J}{(I,J) \in \BPart}$ forms a partition of $\set{1,\dots,N} \times \set{1,\dots,N}$.

\item There holds $\BPart = \BPartSmall \cup \BPartAdm$, where every $(I,J) \in \BPartSmall$ is small and every $(I,J) \in \BPartAdm$ is admissible.

\item For all $\mvemph{B} \in \R^{N \times N}$, there holds the bound
\begin{equation*}
\norm[2]{\mvemph{B}} \cleq \ln(\hMin{}^{-d}) \max_{(I,J) \in \BPart} \norm[2]{\restrictN{\mvemph{B}}{I \times J}}.
\end{equation*}

\end{enumerate}
\end{definition}

\begin{definition} \label{H_matrices}
Let $\BPart$ be a sparse hierarchical block partition and $r \in \N$ a given \emph{block rank bound}. We define the set of \emph{$\mathcal{H}$-matrices} by
\begin{equation*}
\HMatrices{\BPart}{r} := \Set{\mvemph{B} \in \R^{N \times N}}{\forall (I,J) \in \BPartAdm: \exists \mvemph{X} \in \R^{I \times r}, \mvemph{Y} \in \R^{J \times r}: \restrictN{\mvemph{B}}{I \times J} = \mvemph{X} \mvemph{Y}^T}.
\end{equation*}
\end{definition}

We mention that a sparse hierarchical block partition $\BPart$ can be constructed, e.g., using the \emph{geometrically balanced clustering strategy} from \cite{Hackbusch_Geometric_clustering}. In fact, recall from item $(3)$ of \cref{Basis_fcts} that, at any given point $x \in \R^d$, no more than $\binom{p+d}{d}$ of the sets $\Omega_n$ can overlap. Then, assuming that the clustering parameter $\CSmall$ is chosen such that $\binom{p+d}{d} \leq \CSmall \cleq \binom{p+d}{d}$, the authors of \cite{Hackbusch_Geometric_clustering} derived the following bounds for the \emph{block cluster tree $\Tree{N \times N}$}:
\begin{equation*}
C_{\mathrm{sparse}}(\Tree{N \times N}) \cleq 1, \quad \quad \quad \depth{\Tree{N \times N}} \cleq \ln(\hMin{}^{-d}).
\end{equation*}
(See, e.g., \cite{Hackbusch_Geometric_clustering} or \cite{Hackbusch_Hierarchical_matrices} for a precise definition of these fundamental quantities.) The asserted bound in item $(3)$ of \cref{Block_partition} then follows readily from \cite[L.6.5.8]{Hackbusch_Hierarchical_matrices}.

Finally, according to \cite[Lemma 6.3.6]{Hackbusch_Hierarchical_matrices}, the memory requirements to store an $\mathcal{H}$-matrix $\mvemph{B} \in \HMatrices{\BPart}{r}$ can be bounded by
\begin{equation*}
N_{\mathrm{memory}} \leq C_{\mathrm{sparse}}(\Tree{N \times N}) (r+\CSmall) \depth{\Tree{N \times N}} N \cleq (r+p^d) \ln(\hMin{}^{-d}) N.
\end{equation*}

Since $\mvemph{B}$ shall serve as an approximation for the $N^2$ entries of the matrix $\mvemph{A}^{-1} \in \R^{N \times N}$, this approach requires bounds of $r$, $p$ and $\hMin{}$ in terms of $N = \dimN{\SkpO{1}{p}{\Elements}} \ceq p^d \cardN{\Elements}$. For example, if the mesh $\Elements$ is such that
\begin{equation} \label{Mesh_Assumption}
1 \cleq (\cardN{\Elements})^{\CCard} \hMin{}^d,
\end{equation}
for some constant $\CCard \geq 1$, then we end up with the following bound:
\begin{equation*}
N_{\text{memory}} \cleq (r+p^d)\ln(N/p^d) N \leq (r+p^d) \ln(N) N.
\end{equation*}

\subsection{The main result} \label{SSec_The_main_result}

The following theorem is the main result of the present work. Roughly speaking, it states that inverses of FEM matrices can be approximated at an exponential rate in the block rank by hierarchical matrices.

\begin{theorem} \label{Main_result}
Let $\bilinear[a]{\cdot}{\cdot}$ be the elliptic bilinear form from \cref{Bilinear_form}, let $\Elements \subseteq \Pow{\Omega}$ be a mesh as in \cref{Mesh}, and let $p \geq 1$ be an arbitrary integer. Let $\set{\phi_1,\dots,\phi_N} \subseteq \SkpO{1}{p}{\Elements}$ be a basis that allows for a system of local dual functions (see \cref{Basis_fcts}) and denote the corresponding stability constant by $\CStab>0$. Furthermore, let $\mvemph{A} \in \R^{N \times N}$ be the Galerkin stiffness matrix from \cref{System_matrix} and $\BPart$ be a sparse hierarchical block partition as in \cref{Block_partition}. Finally, let $\CRed \geq 2$ be the constant from \cref{Melenk_Rojik} further below. Then, there exists a constant $\CExp = C(d,\Omega,a,\CShape,\CAdm) > 0$ such that the following holds true: For every block rank bound $r \in \N$, there exists an $\mathcal{H}$-matrix $\mvemph{B} \in \HMatrices{\BPart}{r}$ such that
\begin{equation*}
\norm[2]{\mvemph{A}^{-1} - \mvemph{B}} \cleq p^{2\CStab} \ln(\hMin{}^{-d}) \hMin{}^{-d} \exp(-\CExp r^{1/(d+1)} p^{-\CRed}).
\end{equation*}
\end{theorem}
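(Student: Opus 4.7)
The plan is to follow the standard discrete-Green's-function strategy, with a $p$-explicit version of the iterated cutoff/coarsening scheme.

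\textbf{Global-to-local reduction and entry representation.} By item~(3) of \cref{Block_partition}, it suffices to construct, for every admissible block $(I,J)\in\BPartAdm$, a rank-$r$ matrix $\mvemph{B}_{I\times J}$ making $\norm[2]{\restrict{\mvemph{A}^{-1}-\mvemph{B}}{I\times J}}$ exponentially small; for small blocks $(I,J)\in\BPartSmall$ one stores the exact entries, which is automatically of rank at most $\CSmall$. Introduce the discrete solution operator $\fDef{S_h}{\Lp{2}{\Omega}}{\SkpO{1}{p}{\Elements}}$ via $\bilinear[a]{S_hf}{v}=\skalar[\Lp{2}{\Omega}]{f}{v}$ for all $v\in\SkpO{1}{p}{\Elements}$. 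Using $\skalar[\Lp{2}{\Omega}]{\phi_n}{\lambda_m}=\kronecker{nm}$, one obtains the representation $(\mvemph{A}^{-1})_{nm}=\skalar[\Lp{2}{\Omega}]{\lambda_n}{S_h\lambda_m}$. Consequently, if $V_r\subseteq\Lp{2}{\Omega}$ is an $r$-dimensional subspace with the property that, for every $m\in J$, the function $S_h\lambda_m$ admits an $\Lp{2}{B_I}$-approximation from $V_r$ with uniform error $\varepsilon$, then testing with $\lambda_n$ for $n\in I$ produces a rank-$r$ block approximation whose spectral-norm error is controlled by $\varepsilon$ times the two stability constants from \cref{Basis_fcts}(2), i.e.\ by $p^{2\CStab}\hMin{}^{-d}\varepsilon$ up to a generic constant.

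\textbf{Low-dimensional approximation of the discrete Green's function on separated boxes.} Fix an admissible block with target box $B_I$ and source box $B_J$. The admissibility condition $\diam[2]{B_I}\leq\CAdm\dist[2]{B_I}{B_J}$ lets us choose a chain $B_I=:B^{(0)}\subsetneq B^{(1)}\subsetneq\cdots\subsetneq B^{(L)}$ of concentric boxes, each enlarged by a fixed small fraction of $\dist[2]{B_I}{B_J}$, with $B^{(L)}$ still well separated from $B_J$. For $f$ supported in $B_J$, the discrete function $u:=S_hf$ satisfies a Galerkin orthogonality against all test functions of $\SkpO{1}{p}{\Elements}$ supported in each $B^{(\ell)}$, so $u$ is ``$a$-harmonic'' on these annuli in a discrete sense. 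The single-step building block combines the cutoff and coarsening operators with the polynomial-preserving lifting and projection operators on simplices in arbitrary dimension (generalising \cite{demkowicz-babuska03,demkowicz-buffa05,Melenk_Rojik}) to produce an operator that maps $u$ into a finite-dimensional space of dimension $\cleq L^d$ up to polynomial-in-$p$ factors, and that contracts the $\Lp{2}{}$-norm from $B^{(\ell)}$ down to $B^{(\ell-1)}$ by a factor $q(p)<1$ whose distance to $1$ scales as $p^{-\CRed}$.

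\textbf{Iteration and rate optimisation.} Composing the single-step operators along the nested chain and telescoping the errors, one obtains an operator $Q$ whose image sits inside a subspace of dimension $\cleq L^{d+1}$ (times polynomial factors in $p$) and which satisfies $\norm[\Lp{2}{B_I}]{u-Qu}\cleq q(p)^L\norm[\Lp{2}{B^{(L)}}]{u}$; the outer norm is controlled by the standard stability of $S_h$. Balancing dimension and error by $L\sim r^{1/(d+1)}$ and using $q(p)^L\leq\exp(-c L p^{-\CRed})$ yields the targeted factor $\exp(-\CExp r^{1/(d+1)} p^{-\CRed})$. Combining with the logarithmic factor from \cref{Block_partition}(3) and the two stability factors from the first paragraph produces the asserted estimate.

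\textbf{Main obstacle.} The delicate point is the $p$-explicit single-step reduction. Once it is available, the iteration and bookkeeping follow the scheme of \cite{Angleitner_H_matrices_FEM}. The novelty, and the technical heart of the proof, is the construction of polynomial-preserving lifting and projection operators on shape-regular simplicial meshes in arbitrary spatial dimension with $p$-explicit stability and approximation bounds, and the verification that they interact correctly with the cutoff and coarsening operators in the fully discrete Galerkin setting so that each step loses only the polynomial factor $p^{-\CRed}$ in the geometric reduction. This is exactly what the generalisation of the projection-based operators from \cite{demkowicz-babuska03,demkowicz-buffa05,Melenk_Rojik} advertised in the introduction is designed to supply.
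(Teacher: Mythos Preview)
Your outline follows the paper's proof: reduce to admissible blocks via \cref{Block_partition}(3), pass to the function level through the dual-function representation (\cref{Mat_lvl_to_fct_lvl}), iterate a single-step coarsening operator along a chain of nested boxes (\cref{Coarse_op_single}, \cref{Coarse_op_multi}, \cref{Space_VBDL}), and choose $L$ in terms of $r$.

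The one place where your account is off is the location of the $p$-loss. You put it in the contraction rate (``$1-q(p)\sim p^{-\CRed}$'') while also allowing ``polynomial factors in $p$'' in the per-step rank, and then balance via $L\sim r^{1/(d+1)}$; this bookkeeping is inconsistent and, taken literally, would produce a worse $p$-exponent than claimed. In the paper the $p$-loss lives \emph{only} in the rank: the Cl\'ement mesh size in \cref{Coarse_op_single} is chosen as $H=\delta/(2Cp^{\CRed})$, which makes the contraction factor a fixed $\tfrac12$ (independent of $p$) at the price of per-step rank $\cleq p^{d\CRed}(1+\diam[2]{B}/\delta)^d$. After $L$ steps one has error $2^{-L}$ and total rank $\cleq p^{d\CRed}L^{d+1}$; the choice $L:=\lfloor(r/C)^{1/(d+1)}p^{-\CRed}\rfloor$ then gives rank $\le r$ (via $p^{d\CRed}L^{d+1}\le(p^{\CRed}L)^{d+1}\le r$) together with $2^{-L}\le\exp(-\CExp r^{1/(d+1)}p^{-\CRed})$. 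A related imprecision: the operator of \cref{Melenk_Rojik} does not enter the coarsening step directly. It is used one level earlier, to prove the $p$-explicit discrete Caccioppoli inequality (\cref{Space_SHarm_Cacc_old}, \cref{Space_SHarm_Cacc}) via the test function $J_{\Elements}^p(\kappa^2 u)\in\SkpO{1}{p}{\Elements}$, and the factor $p^{\CRed}$ there is precisely what forces the above choice of $H$. The coarsening operator itself is merely the Cl\'ement operator on an auxiliary uniform mesh of size $H$, composed with a smooth cut-off and the orthogonal projection onto $\SHarm{B}$.
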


Note that, apart from shape-regularity, this result needs no further assumptions on the mesh $\Elements$. However, the previous discussion about the storage complexity of $\mathcal{H}$-matrices suggests that we might as well assume \cref{Mesh_Assumption}. In this case, we immediately get the following corollary:

\begin{corollary} \label{Main_result_corollary}
Assume that the mesh $\Elements$ satisfies \cref{Mesh_Assumption}, for some constant $\CCard \geq 1$. Then \cref{Main_result} holds verbatim with a bound
\begin{equation*}
\norm[2]{\mvemph{A}^{-1} - \mvemph{B}} \cleq p^{2\CStab-d\CCard} \ln(N) N^{\CCard} \exp(-\CExp r^{1/(d+1)} p^{-\CRed}).
\end{equation*}

\end{corollary}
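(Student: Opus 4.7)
The proof is a direct substitution: \cref{Main_result_corollary} is obtained from \cref{Main_result} by expressing $\hMin{}$ in terms of $N$ and $p$ via the hypothesis \cref{Mesh_Assumption}. I would therefore begin by invoking \cref{Main_result} to produce, for the given $r \in \N$, an $\mathcal{H}$-matrix $\mvemph{B} \in \HMatrices{\BPart}{r}$ realizing
\begin{equation*}
\norm[2]{\mvemph{A}^{-1} - \mvemph{B}} \cleq p^{2\CStab} \ln(\hMin{}^{-d}) \hMin{}^{-d} \exp(-\CExp r^{1/(d+1)} p^{-\CRed}).
\end{equation*}
The task is then to replace the geometric prefactor $p^{2\CStab}\ln(\hMin{}^{-d})\hMin{}^{-d}$ by the asserted $N$-dependent quantity $p^{2\CStab - d\CCard}\ln(N) N^{\CCard}$; the exponential factor is left untouched.

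The second step is to bring in the standard dimension count for the spline space, $N = \dim{\SkpO{1}{p}{\Elements}} \ceq p^d \cardN{\Elements}$, which gives $\cardN{\Elements} \ceq p^{-d} N$. Combined with the hypothesis $\hMin{}^{-d} \cleq (\cardN{\Elements})^{\CCard}$ from \cref{Mesh_Assumption}, this yields the two elementary bounds
\begin{equation*}
\hMin{}^{-d} \cleq (\cardN{\Elements})^{\CCard} \ceq p^{-d\CCard} N^{\CCard}, \qquad \ln(\hMin{}^{-d}) \cleq \CCard \ln(p^{-d} N) \leq \CCard \ln N,
\end{equation*}
where the last inequality uses $p \geq 1$ (so $\ln p \geq 0$) and $\CCard \geq 1$. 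Inserting both estimates into the prefactor produces
\begin{equation*}
p^{2\CStab} \ln(\hMin{}^{-d}) \hMin{}^{-d} \cleq p^{2\CStab - d\CCard} \ln(N)\, N^{\CCard},
\end{equation*}
which is exactly the prefactor appearing in \cref{Main_result_corollary}.

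There is essentially no obstacle here: the corollary is a pure bookkeeping step, and all implied constants — including $\CCard$, which is a fixed mesh parameter — may be absorbed into the generic $\cleq$-constant under our conventions. The only care needed is to keep the $p$ and $N$ contributions separated, which is automatic because the dimension identity $\cardN{\Elements} \ceq p^{-d} N$ decouples them cleanly, ensuring that the $p$-explicit character of \cref{Main_result} is preserved in the $N$-explicit formulation of \cref{Main_result_corollary}.
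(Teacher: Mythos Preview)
Your proof is correct and follows exactly the substitution the paper has in mind; the paper omits the argument entirely (``we immediately get the following corollary''), and your derivation via $\hMin{}^{-d} \cleq (\cardN{\Elements})^{\CCard} \ceq p^{-d\CCard} N^{\CCard}$ together with $\ln(\hMin{}^{-d}) \cleq \ln(N/p^d) \leq \ln N$ matches the computation the authors sketch just above the corollary for the memory bound.
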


\begin{figure}[H]
\begin{center}
\includegraphics[width=1\textwidth, trim=0cm 0cm 0cm 0cm]{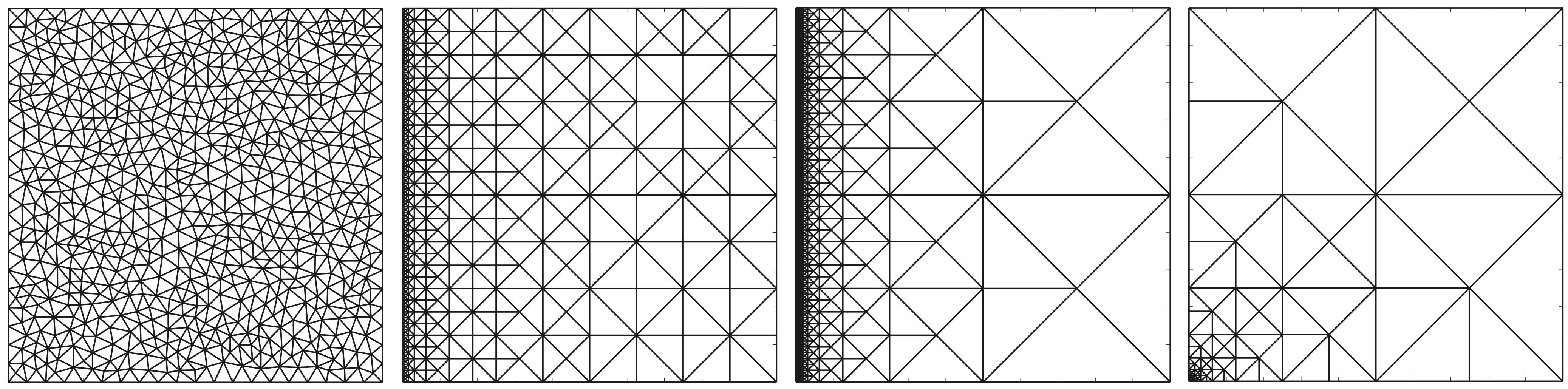}
\caption{From left to right: Uniform, algebraically graded towards edge, exponentially graded towards edge, exponentially graded towards corner. \cref{Main_result_corollary} covers the first, second and third type, but \emph{not} the last one.}
\label{Mesh_examples}
\end{center}
\end{figure}

The assumption \cref{Mesh_Assumption} is satisfied for a wide variety of meshes including uniform, algebraically graded and even some exponentially graded ones (cf. \cref{Mesh_examples}). Given parameters $H>0$ and $\alpha \in [1,\infty]$, and a subset $\Gamma \subseteq \Omega$, we say that a mesh $\Elements$ is \emph{graded towards $\Gamma$}, if there holds the relationship $\h{T} \ceq \dist[2]{x_T}{\Gamma}^{1-1/\alpha} H$, for all $T \in \Elements$. The case $\alpha=1$ is called \emph{uniform}, the case $\alpha \in (1,\infty)$ is an \emph{algebraic grading} and the case $\alpha=\infty$ represents \emph{exponential grading}. If $\alpha \in [1,\infty)$, then \cref{Mesh_Assumption} is satisfied with $\CCard = \alpha$. In the case $\alpha=\infty$, however, the relationship need not necessarily be fulfilled.

\begin{remark}
One possible application of exponentially graded meshes can be found in the context of the \emph{boundary concentrated FEM}, e.g., \cite{Melenk_Boundary_concentrated_FEM} and \cite{Melenk_Boundary_concentrated_FEM_Direct_solver}. This method is similar to the boundary element method (BEM), in that most mesh elements are near the boundary of $\Omega$. However, we mention that \cref{Main_result} is not directly applicable to this method, because \cite{Melenk_Boundary_concentrated_FEM} replaces the (constant-degree) spline spaces $\SkpO{1}{p}{\Elements}$ from \cref{Space_Skp} with variable degree spline spaces $\SkpO{1}{\mvemph{p}}{\Elements}$, $\mvemph{p} = \Set{p_T}{T \in \Elements}$.
\end{remark}

\begin{remark}
In contrast to our previous work, \cite[T.2.15]{Angleitner_H_matrices_FEM}, the constant $\CCard$ from \cref{Main_result_corollary} does not enter the argument of the exponential in the error bound any more. In particular, the rate of convergence (as $r \rightarrow \infty$) does not deteriorate for meshes with stronger grading. This behavior is in accordance with the radial basis function setting, observed in \cite[T.2.18]{Angleitner_H_matrices_RBF}.
\end{remark}

\section{Proof of main result} \label{Sec_Proof_of_main_result}
\subsection{Overview} \label{SSec_Proof_overview}

The main result of our previous work, \cite[T.2.15]{Angleitner_H_matrices_FEM}, was applicable to a class of meshes with \emph{locally bounded cardinality}, which included uniform and algebraically graded meshes, but excluded exponential grading. Quoting \cite[D.2.4]{Angleitner_H_matrices_FEM}, a mesh $\Elements$ has locally bounded cardinality, if there exists a constant $\CCard \geq 1$ such that
\begin{equation}
\label{eq:locally-bounded-cardinality}
\hMax{}^{\CCard} \cleq \hMin{}, \quad \quad \quad \forall \ElementsB \subseteq \Elements: \quad \cardN{\ElementsB} \cleq (1 + \diam[\Elements]{\ElementsB}/\hMax{\ElementsB})^{d\CCard}.
\end{equation}

The left part of (\ref{eq:locally-bounded-cardinality}) could easily be replaced by the assumption $1 \cleq N^{\CCard} \hMin{}^d$ from \cref{Main_result_corollary}. However, we may as well avoid it altogether. In fact, whenever $\hMin{}$ appears during the subsequent proof, we simply leave it as is and refrain from replacing it with any potential lower bound. Consequently, the error bound in \cref{Main_result} is formulated in terms of $\hMin{}$, rather than $\hMax{}$ or $N$.

The right part of the assumption (\ref{eq:locally-bounded-cardinality}) is much harder to remove. It was used in the proof of \cite[T.3.31]{Angleitner_H_matrices_FEM} to find a suitable rank bound for the \emph{single-step coarsening operator} $\fDef{\CoarseningOp{\ElementsB}{\delta}}{\SHarm{\inflateN{\ElementsB}{\delta}}}{\SHarm{\ElementsB}}$, where $\ElementsB \subseteq \Elements$ was a given set of elements and where $\delta>0$ was the inflation radius. (See \cite[D.3.25]{Angleitner_H_matrices_FEM} for the precise definition of the space $\SHarm{\ElementsB}$.) Let us briefly illustrate why our previous construction of this operator might fail for exponentially graded meshes: For certain technical reasons, the cases $\delta \cgeq \hMax{\ElementsB}$ and $\delta \cleq \hMax{\ElementsB}$ were treated differently. In the case $\delta \cgeq \hMax{\ElementsB}$, we used a uniform mesh $\ElementsS$ of meshsize $\delta$ for re-interpolation, producing an approximant with roughly $\Landau{\delta^{-d}}$ degrees of freedom. In the remaining case $\delta \cleq \hMax{\ElementsB}$, however, re-interpolation was not necessary. In fact, due to the assumption of locally bounded cardinality, the function $u \in \SHarm{\inflateN{\ElementsB}{\delta}}$ to be approximated had less than $\Landau{\delta^{-d\CCard}}$ degrees of freedom anyways. Now, in the case of an exponentially graded mesh $\Elements$, the input $u$ might have significantly more than $\Landau{\delta^{-d\CCard}}$ degrees of freedom. After all, we could refine one of the elements in $\ElementsB$ arbitrarily often without ever affecting $\hMax{\ElementsB}$, essentially raising the dimension above any fixed power of $\delta^{-1}$.

The main idea of our revised proof is to eliminate all occurrences of the technical assumption $\delta \cgeq \hMax{\ElementsB}$, so that the uniform mesh $\ElementsS$ can be used in all cases, regardless of the relative sizes of $\delta$ and $\hMax{\ElementsB}$. This problematic assumption rooted in our decision to use a \emph{discrete} cut-off function $\CutoffFc{\ElementsB}{\delta} \in \Skp{1}{1}{\Elements}$ with $\restrictN{\CutoffFc{\ElementsB}{\delta}}{\ElementsB} \equiv 1$ and $\supp[\Elements]{\CutoffFc{\ElementsB}{\delta}} \subseteq \inflateN{\ElementsB}{\delta}$. With its help, we proved the discrete Caccioppoli inequality $\delta\seminorm[\Hk{1}{\ElementsB}]{u} \cleq \norm[\Lp{2}{\inflateN{\ElementsB}{\delta}}]{u}$, $u \in \SHarm{\inflateN{\ElementsB}{\delta}}$, and constructed the discrete cut-off operator $\fDef{\CutoffOp{\ElementsB}{\delta}}{\Skp{1}{p}{\Elements}}{\Skp{1}{p}{\Elements}}$. In order to avoid the problems that come with \emph{discrete} cut-off functions, we revert to the original idea of \cite{Faustmann_H_matrices_FEM} of using axes-parallel boxes $B \subseteq \R^d$ instead of element clusters $\ElementsB \subseteq \Elements$. A \emph{smooth} cut-off function $\CutoffFc{B}{\delta} \in \Ck{\infty}{\closureN{\Omega}}$ with $\restrictN{\CutoffFc{B}{\delta}}{B} \equiv 1$ and $\supp{\CutoffFc{B}{\delta}} \subseteq \inflateN{B}{\delta}$ can easily be constructed, even if $\delta \ll \hMax{\ElementsB}$. However, the cut-off operator now maps $\fDef{\CutoffOp{B}{\delta}}{\Hk{1}{\Omega}}{\Hk{1}{\Omega}}$, so that the previous definition of the space $\SHarm{\ElementsB} \subseteq \SkpO{1}{p}{\Elements}$ needs some minor modifications (recall from \cite[L.3.26]{Angleitner_H_matrices_FEM} that $\CutoffOp{\ElementsB}{\delta} u \in \SHarm{\ElementsB}$, for all $u \in \SHarm{\ElementsB}$, was an important property). Finally, we need to show the discrete Caccioppoli inequality for the updated spaces $\SHarm{B} \subseteq \HkO{1}{\Omega}$. It turns out that the assumption $\delta \cgeq \hMax{\ElementsB}$ can be dropped, because the discrete Caccioppoli inequality reduces to an inverse inequality on large elements.

\subsection{Reduction from matrix level to function space level} \label{SSec_Reduction_mat_lvl_fct_lvl}

\begin{definition} \label{Sol_op_disc}
Let $\fDef{a}{\HkO{1}{\Omega} \times \HkO{1}{\Omega}}{\R}$ be the bilinear form from \cref{Bilinear_form}. For every $f \in \Lp{2}{\Omega}$, denote by $\SolutionOp{\Elements} f \in \SkpO{1}{p}{\Elements}$ the unique function satisfying the following variational equality:
\begin{equation*}
\forall v \in \SkpO{1}{p}{\Elements}: \quad \quad \bilinear[a]{\SolutionOp{\Elements} f}{v} = \skalar[\Lp{2}{\Omega}]{f}{v}.
\end{equation*}

The linear mapping $\fDef{\SolutionOp{\Elements}}{\Lp{2}{\Omega}}{\SkpO{1}{p}{\Elements}}$ is called \emph{discrete solution operator}.
\end{definition}

Note that existence and uniqueness of $\SolutionOp{\Elements} f$ are provided by the Lax-Milgram Lemma. Additionally, there holds the a priori bound $\norm[\Hk{1}{\Omega}]{\SolutionOp{\Elements} f} \cleq \norm[\Lp{2}{\Omega}]{f}$.

According to \cref{H_matrices} and the asserted stability bound in \cref{Block_partition}, the task of approximating the whole matrix $\mvemph{A}^{-1}$ by an $\mathcal{H}$-matrix $\mvemph{B} \in \HMatrices{\BPart}{r}$ reduces to the one of approximating the admissible blocks $\restrictN{\mvemph{A}^{-1}}{I \times J}$ by means of matrices $\mvemph{X} \in \R^{I \times r}$ and $\mvemph{Y} \in \R^{J \times r}$. We then proceed as in \cite{Angleitner_H_matrices_FEM} and transfer from the matrix level to the function space level:

\begin{lemma} \label{Mat_lvl_to_fct_lvl}
Let $I,J \subseteq \set{1,\dots,N}$ and $V \subseteq \Lp{2}{\Omega}$ be a finite-dimensional subspace. Then, there exist an integer $r \leq \dimN{V}$ and matrices $\mvemph{X} \in \R^{I \times r}$ and $\mvemph{Y} \in \R^{J \times r}$, such that there holds the following error bound:
\begin{equation*}
\norm[2]{\restrictN{\mvemph{A}^{-1}}{I \times J} - \mvemph{X}\mvemph{Y}^T} \cleq p^{2\CStab} \hMin{}^{-d} \sup_{\substack{f \in V: \\ \supp{f} \subseteq \Omega_J}} \inf_{v \in V} \frac{\norm[\Lp{2}{\Omega_I}]{\SolutionOp{\Elements} f - v}}{\norm[\Lp{2}{\Omega}]{f}}.
\end{equation*}
\end{lemma}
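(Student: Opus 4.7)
The strategy is a duality reduction: use the dual basis $\set{\lambda_n}$ to translate between coefficient vectors and functions in $\Lp{2}{\Omega}$. The synthesis map $\mvemph{x} \mapsto \sum_m \mvemph{x}_m \lambda_m \colon \R^N \to \Lp{2}{\Omega}$ and its adjoint both have operator norm $\cleq p^{\CStab} \hMin{}^{-d/2}$ by \cref{Basis_fcts}~(2). The two factors $p^{\CStab} \hMin{}^{-d/2}$ in the desired bound will arise from applying this estimate once on the input side (passing from $\norm[\Lp{2}{\Omega}]{f_{\mvemph{y}}}$ to $\norm[2]{\mvemph{y}}$) and once on the output side (passing from an $\Lp{2}{\Omega_I}$ approximation error to the $\ell^2$ norm on $I$).

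First, I would make the action of $\mvemph{A}^{-1}$ analytically explicit. For $\mvemph{y} \in \R^N$ supported in $J$, set $f_{\mvemph{y}} := \sum_{k \in J} \mvemph{y}_k \lambda_k$; then $\supp{f_{\mvemph{y}}} \subseteq \Omega_J$ by locality of $\set{\lambda_k}$. Testing the Galerkin equation against $\phi_m$ and using $\skalar[\Lp{2}{\Omega}]{\phi_m}{\lambda_k} = \kronecker{mk}$, one sees that $\mvemph{A}^{-1}\mvemph{y}$ is the coefficient vector of $\SolutionOp{\Elements} f_{\mvemph{y}}$ in the basis $\set{\phi_n}$, recovered via $(\mvemph{A}^{-1}\mvemph{y})_n = \skalar[\Lp{2}{\Omega}]{\SolutionOp{\Elements} f_{\mvemph{y}}}{\lambda_n}$. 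Because $\supp{\lambda_n} = \Omega_n \subseteq \Omega_I$ for $n \in I$, these inner products depend only on the restriction of $\SolutionOp{\Elements} f_{\mvemph{y}}$ to $\Omega_I$.

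To construct the rank-$r$ approximant, let $\Pi$ denote the $\Lp{2}{\Omega_I}$-orthogonal projection onto $V|_{\Omega_I}$, set $\widetilde{u}_{\mvemph{y}} := \Pi(\SolutionOp{\Elements} f_{\mvemph{y}})$ and $(\widetilde{T}\mvemph{y})_n := \skalar[\Lp{2}{\Omega_I}]{\widetilde{u}_{\mvemph{y}}}{\lambda_n}$ for $n \in I$. Since $\mvemph{y} \mapsto \widetilde{u}_{\mvemph{y}}$ is linear with image of dimension $r \leq \dim V$, fixing a basis $\set{w_j}_{j=1}^{r}$ of that image and expanding $\widetilde{u}_{\mvemph{y}} = \sum_j (\mvemph{Y}^T\mvemph{y})_j w_j$ yields $\widetilde{T} = \mvemph{X}\mvemph{Y}^T$ with $\mvemph{X}_{n,j} := \skalar[\Lp{2}{\Omega_I}]{w_j}{\lambda_n}$. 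For the error, I would apply the adjoint stability bound to $g := (\SolutionOp{\Elements} f_{\mvemph{y}} - \widetilde{u}_{\mvemph{y}}) \charFunc{\Omega_I}$, which does not alter any of the inner products with $\lambda_n$ for $n \in I$; this gives
\[
\norm[2]{(\mvemph{A}^{-1}|_{I \times J} - \widetilde{T})\mvemph{y}} \cleq p^{\CStab} \hMin{}^{-d/2} \norm[\Lp{2}{\Omega_I}]{\SolutionOp{\Elements} f_{\mvemph{y}} - \widetilde{u}_{\mvemph{y}}} = p^{\CStab} \hMin{}^{-d/2} \inf_{v \in V} \norm[\Lp{2}{\Omega_I}]{\SolutionOp{\Elements} f_{\mvemph{y}} - v},
\]
the final equality being the best-approximation property of $\Pi$. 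A second application of the stability bound delivers $\norm[\Lp{2}{\Omega}]{f_{\mvemph{y}}} \cleq p^{\CStab} \hMin{}^{-d/2} \norm[2]{\mvemph{y}}$; dividing and taking the supremum over nonzero $\mvemph{y}$ (whereby $f_{\mvemph{y}}$ ranges over $\spanN{\set{\lambda_k : k \in J}} \subseteq V$, as arranged in the intended applications) completes the bound.

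The main technical subtlety I anticipate is the clean reduction from $\Lp{2}{\Omega}$ to $\Lp{2}{\Omega_I}$ in the adjoint estimate, achieved here via the indicator-function trick; this is what allows the right-hand side of the lemma to feature an infimum over $v \in V$ measured only on $\Omega_I$, a property that will be decisive for the admissibility exploitation in subsequent sections. Apart from this step, the proof is a chain of elementary linear-algebra and duality identities; no approximation theory for $\SolutionOp{\Elements}$ is needed at this stage, since the choice of $V$ is left free.
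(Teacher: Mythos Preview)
Your argument is the standard duality reduction and matches the proof underlying \cite[L.3.13]{Angleitner_H_matrices_FEM}, which the paper merely cites; the paper's only contribution here is to make the constant $\norm{\Lambda}^2$ explicit in $p$ via \cref{Basis_fcts}, exactly as you do by applying the stability bound twice to obtain $p^{2\CStab}\hMin{}^{-d}$.

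One point deserves correction. Your argument naturally produces a supremum over $f \in \spanN{\set{\lambda_k : k \in J}}$, not over $\Set{f \in V}{\supp{f} \subseteq \Omega_J}$ as the lemma asserts. Your parenthetical claim that ``$\spanN{\set{\lambda_k : k \in J}} \subseteq V$, as arranged in the intended applications'' is not true: in the proof of \cref{Main_result} the space $V = V_{B_I,B_J,L}$ is the range of the multi-step coarsening operator and lies inside $\SHarm{B_I} \subseteq \HkO{1}{\Omega}$, whereas the dual functions $\lambda_k$ are merely in $\Lp{2}{\Omega}$ and have no reason to belong to $V$. This is not a defect in your argument but rather a slip in the lemma's displayed supremum (which should read $f \in \Lp{2}{\Omega}$, or at least $f$ in the span of the relevant $\lambda_k$'s); indeed, the paper's own proof sketch writes an unqualified ``$\sup_f$''. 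The downstream application is unaffected either way, since \cref{Space_VBDL} furnishes the approximation bound for \emph{all} $f \in \Lp{2}{\Omega}$ with $\supp{f} \subseteq D$, hence in particular for the $f_{\mvemph{y}}$ you construct. So drop the parenthetical and state the supremum over the correct set; the rest of your proof is sound.
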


\begin{proof}
In \cite[L.3.13]{Angleitner_H_matrices_FEM}, the bound $\norm[2]{\restrictN{\mvemph{A}^{-1}}{I \times J} - \mvemph{X}\mvemph{Y}^T} \leq \norm{\Lambda}^2 \sup_f \inf_v \norm[\Lp{2}{\Omega_I}]{\SolutionOp{\Elements} f - v}/\norm[\Lp{2}{\Omega}]{f}$ was shown. Using the asserted stability bound from \cite[D.2.6]{Angleitner_H_matrices_FEM}, the constant $\norm{\Lambda}^2$ was subsequently bounded by $C(d,\CShape,p)\hMin{}^{-d}$. Since the stability bound in \cref{Basis_fcts} is now explicit in $p$, we can plug in $C(d,\CShape,p) = C(d,\CShape) p^{2\CStab}$.
\end{proof}

\subsection{The cut-off operator}

\begin{definition} \label{Box_infl}
Let $B = \bigtimes_{i=1}^{d} (a_i,b_i)$ with $a_i < b_i$ be a box as in \cref{Box}. For every $\delta \geq 0$, we introduce the \emph{inflated box} $\inflateN{B}{\delta} := \bigtimes_{i=1}^{d} (-\delta+a_i,b_i+\delta) \subseteq \R^d$.
\end{definition}

Note that $\inflateN{B}{\delta}$ is again a box. In particular, we can iterate $\inflate{\inflateN{B}{\delta}}{\delta} = \inflateN{B}{2\delta}$, $\inflate{\inflate{\inflateN{B}{\delta}}{\delta}}{\delta} = \inflateN{B}{3\delta}$, et cetera.

\begin{lemma} \label{Cut_off_fct}
Let $B \subseteq \R^d$ be a box and $\delta>0$. Then, there exists a \emph{smooth cut-off function} $\CutoffFc{B}{\delta}$ with the following properties:
\begin{equation*}
\CutoffFc{B}{\delta} \in \Ck{\infty}{\closureN{\Omega}}, \quad \quad \supp{\CutoffFc{B}{\delta}} \subseteq \Omega \cap \inflateN{B}{\delta}, \quad \quad \restrictN{\CutoffFc{B}{\delta}}{\Omega \cap B} \equiv 1, \quad \quad 0 \leq \CutoffFc{B}{\delta} \leq 1, \quad \quad \forall l \in \N_0: \seminorm[\Wkp{l}{\infty}{\Omega}]{\CutoffFc{B}{\delta}} \cleq \delta^{-l}.
\end{equation*}
\end{lemma}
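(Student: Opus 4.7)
The plan is to construct $\kappa_B^\delta$ as a tensor product of one-dimensional cutoffs, which is the standard approach. The entire lemma is essentially a scaling argument applied to a fixed smooth template.

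First, I would fix once and for all a univariate auxiliary function $\chi \in C^\infty(\R)$ satisfying $0 \leq \chi \leq 1$, $\chi(t) = 1$ for $t \leq 0$ and $\chi(t) = 0$ for $t \geq 1$. Such a function exists by the classical construction via the indicator of $(-\infty,1/2)$ mollified by a bump, or explicitly via $\chi(t) = g(1-t)/(g(t) + g(1-t))$ with $g(t) = e^{-1/t}\mathbb{I}_{t>0}$. In particular all derivatives $\chi^{(l)}$ are bounded: $\|\chi^{(l)}\|_{L^\infty(\R)} \leq C_l$ for some constants $C_l$ depending only on $l$.

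Next, writing $B = \bigtimes_{i=1}^d (a_i, b_i)$, I define for each coordinate a scaled and translated cutoff
\begin{equation*}
\chi_{B,\delta,i}(t) := \chi\!\left(\frac{a_i - t}{\delta}\right) \chi\!\left(\frac{t - b_i}{\delta}\right), \quad t \in \R,
\end{equation*}
so that $\chi_{B,\delta,i} \equiv 1$ on $[a_i, b_i]$, vanishes outside $(a_i-\delta, b_i+\delta)$, lies in $[0,1]$, and belongs to $C^\infty(\R)$. I then set
\begin{equation*}
\CutoffFc{B}{\delta}(x) := \prod_{i=1}^d \chi_{B,\delta,i}(x_i), \quad x \in \R^d,
\end{equation*}
restricted to $\closureN{\Omega}$. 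The support, pointwise bounds, and the fact that $\CutoffFc{B}{\delta} \equiv 1$ on $B$ follow immediately from the corresponding properties of the one-dimensional factors, and $\supp{\CutoffFc{B}{\delta}} \subseteq \Omega \cap \inflateN{B}{\delta}$ follows since $\inflateN{B}{\delta} = \bigtimes_i (a_i-\delta, b_i+\delta)$ by \cref{Box_infl}.

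The derivative bound is obtained by the chain rule. For a multi-index $\alpha \in \N_0^d$ with $|\alpha| = l$, the partial derivative $\partial^\alpha \CutoffFc{B}{\delta}$ is a product of factors $\chi_{B,\delta,i}^{(\alpha_i)}(x_i)$, and each differentiation of the inner argument $t \mapsto (a_i-t)/\delta$ or $t \mapsto (t-b_i)/\delta$ produces a factor $\delta^{-1}$. Hence
\begin{equation*}
|\partial^\alpha \CutoffFc{B}{\delta}(x)| \cleq \delta^{-|\alpha|},
\end{equation*}
with a constant depending only on $l$ and $d$. Taking the supremum over all such multi-indices yields $\seminorm[\Wkp{l}{\infty}{\Omega}]{\CutoffFc{B}{\delta}} \cleq \delta^{-l}$, as required.

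There is no substantial obstacle here; the only technical care is to keep the constants $C_l$ coming from $\chi^{(l)}$ absorbed into the symbol $\cleq$, which is allowed since they depend only on $l$ and $d$, and the paper's convention permits such dependence.
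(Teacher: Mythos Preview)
Your construction is essentially identical to the paper's: the paper also picks a univariate $g \in C^\infty(\R)$ with $0 \leq g \leq 1$, $g|_{(-\infty,0]} \equiv 1$, $g|_{[1/2,\infty)} \equiv 0$ and sets $\CutoffFc{B}{\delta}(x) = \prod_{i=1}^d g((a_i-x_i)/\delta)\,g((x_i-b_i)/\delta)$. The only cosmetic difference is that the paper takes the vanishing threshold at $1/2$ rather than $1$; with your choice $\chi(t)=0$ for $t\geq 1$ the support of $\CutoffFc{B}{\delta}$ is the \emph{closed} box $\overline{\inflateN{B}{\delta}}$, which is not literally contained in the open box $\inflateN{B}{\delta}$ as the statement demands, so you should shift the threshold to any value strictly below $1$.
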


\begin{proof}
Write $B = \bigtimes_{i=1}^{d} (a_i,b_i)$ and pick a univariate function $g \in \Ck{\infty}{\R}$ with $0 \leq g \leq 1$, $\restrictN{g}{(-\infty,0]} \equiv 1$ and $\restrictN{g}{[1/2,\infty)} \equiv 0$. Then, the function $\CutoffFc{B}{\delta}(x) := \prod_{i=1}^{d} g((a_i-x_i)/\delta)g((x_i-b_i)/\delta)$, $x \in \Omega$, is a valid choice.
\end{proof}

Since $\CutoffFc{B}{\delta}$ is now a smooth function, the corresponding cut-off operator has different mapping properties than before (cf. \cite[D.3.23]{Angleitner_H_matrices_FEM}).

\begin{definition} \label{Cut_off_op}
Let $B \subseteq \R^d$ be a box and $\delta>0$. Denote by $\CutoffFc{B}{\delta} \in \Ck{\infty}{\closureN{\Omega}}$ the smooth cut-off function from \cref{Cut_off_fct}. We define the \emph{cut-off operator}
\begin{equation*}
\fDefB[\CutoffOp{B}{\delta}]{\Hk{1}{\Omega}}{\Hk{1}{\Omega}}{v}{\CutoffFc{B}{\delta}v}.
\end{equation*}
\end{definition}

Let us summarize the key properties of this operator:
\begin{lemma} \label{Cut_off_op_Props}
Let $B \subseteq \R^d$ be a box and $\delta>0$. For all $v \in \Hk{1}{\Omega}$, there hold the cut-off property $\supp{\CutoffOp{B}{\delta} v} \subseteq \Omega \cap \inflateN{B}{\delta}$ and the local projection property $\restrict{\CutoffOp{B}{\delta} v}{\Omega \cap B} = \restrictN{v}{\Omega \cap B}$. If $v \in \HkO{1}{\Omega}$, then $\CutoffOp{B}{\delta} v \in \HkO{1}{\Omega}$ as well. Finally, for all $v \in \Hk{1}{\Omega}$, there holds the stability estimate
\begin{equation*}
\sum_{l=0}^{1} \delta^l \seminorm[\Hk{l}{\Omega}]{\CutoffOp{B}{\delta} v} \cleq \sum_{l=0}^{1} \delta^l \seminorm[\Hk{l}{\Omega \cap \inflateN{B}{\delta}}]{v}.
\end{equation*}
\end{lemma}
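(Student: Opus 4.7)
The proof is a direct verification from the definition $\CutoffOp{B}{\delta} v = \CutoffFc{B}{\delta} v$, combined with the pointwise properties of the smooth cut-off function established in \cref{Cut_off_fct}. I expect the argument to be short and routine; the main content is an application of the product rule together with the bounds $\norm[\Lp{\infty}{\Omega}]{\CutoffFc{B}{\delta}} \leq 1$ and $\norm[\Lp{\infty}{\Omega}]{\gradN{\CutoffFc{B}{\delta}}} \cleq \delta^{-1}$.

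\textbf{Pointwise assertions.} First, I would observe $\supp{\CutoffOp{B}{\delta} v} \subseteq \supp{\CutoffFc{B}{\delta}} \subseteq \Omega \cap \inflateN{B}{\delta}$, giving the support property. The local projection property follows from $\restrictN{\CutoffFc{B}{\delta}}{\Omega \cap B} \equiv 1$, hence $\restrictN{(\CutoffFc{B}{\delta} v)}{\Omega \cap B} = \restrictN{v}{\Omega \cap B}$. For the $\HkO{1}{\Omega}$-preservation, note that for $v \in \HkO{1}{\Omega}$ there exists a sequence $(v_n) \subseteq \CkO{\infty}{\Omega}$ converging to $v$ in $\Hk{1}{\Omega}$; multiplication by the bounded smooth function $\CutoffFc{B}{\delta}$ is continuous on $\Hk{1}{\Omega}$ and sends $\CkO{\infty}{\Omega}$ into itself, so $\CutoffOp{B}{\delta} v \in \HkO{1}{\Omega}$.

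\textbf{Stability estimate.} For the $\Lp{2}$-part, $0 \leq \CutoffFc{B}{\delta} \leq 1$ and the support property immediately give
\begin{equation*}
\seminorm[\Hk{0}{\Omega}]{\CutoffOp{B}{\delta} v} = \norm[\Lp{2}{\Omega \cap \inflateN{B}{\delta}}]{\CutoffFc{B}{\delta} v} \leq \seminorm[\Hk{0}{\Omega \cap \inflateN{B}{\delta}}]{v}.
\end{equation*}
For the $\Hk{1}$-seminorm, the product rule yields $\gradN{(\CutoffFc{B}{\delta} v)} = \CutoffFc{B}{\delta} \gradN{v} + v \gradN{\CutoffFc{B}{\delta}}$, and both summands are supported in $\Omega \cap \inflateN{B}{\delta}$. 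Using the pointwise bounds from \cref{Cut_off_fct},
\begin{equation*}
\seminorm[\Hk{1}{\Omega}]{\CutoffOp{B}{\delta} v} \leq \norm[\Lp{\infty}{\Omega}]{\CutoffFc{B}{\delta}} \seminorm[\Hk{1}{\Omega \cap \inflateN{B}{\delta}}]{v} + \norm[\Lp{\infty}{\Omega}]{\gradN{\CutoffFc{B}{\delta}}} \seminorm[\Hk{0}{\Omega \cap \inflateN{B}{\delta}}]{v} \cleq \seminorm[\Hk{1}{\Omega \cap \inflateN{B}{\delta}}]{v} + \delta^{-1} \seminorm[\Hk{0}{\Omega \cap \inflateN{B}{\delta}}]{v}.
\end{equation*}

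\textbf{Combination.} Multiplying the $\Hk{1}$-bound by $\delta$ and adding the $\Lp{2}$-bound gives
\begin{equation*}
\sum_{l=0}^{1} \delta^l \seminorm[\Hk{l}{\Omega}]{\CutoffOp{B}{\delta} v} \cleq \sum_{l=0}^{1} \delta^l \seminorm[\Hk{l}{\Omega \cap \inflateN{B}{\delta}}]{v},
\end{equation*}
which is the asserted stability estimate. As anticipated, no step is subtle; the only point that requires a brief justification is the $\HkO{1}{\Omega}$-preservation, and even that reduces to a standard density argument for multiplication by a smooth bounded function.
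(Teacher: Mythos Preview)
Your proof is correct and follows essentially the same approach as the paper: the support and local projection properties are read off directly from \cref{Cut_off_fct}, and the stability estimate is obtained via the Leibniz product rule combined with the bounds $\seminorm[\Wkp{l}{\infty}{\Omega}]{\CutoffFc{B}{\delta}} \cleq \delta^{-l}$. Your write-up is in fact more detailed than the paper's, which dispatches the whole lemma in two sentences.
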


\begin{proof}
The stability estimate follows from Leibniz' product rule for derivatives and the relation $\seminorm[\Wkp{l}{\infty}{\Omega}]{\CutoffFc{B}{\delta}} \cleq \delta^{-l}$, $l \in \N_0$. The remaining properties are immediate consequences of \cref{Cut_off_fct}.
\end{proof}

\subsection{The spaces of discrete and harmonic functions}

In this section, we introduce the spaces $\SHarm{B}$ of functions that are discrete and harmonic on some subset $B \subseteq \R^d$. The definition is slightly different from the previous one, \cite[D.3.25]{Angleitner_H_matrices_FEM}. Most notably, $\SHarm{B}$ is now an infinite-dimensional space.

\begin{definition} \label{Space_SHarm}
Let $B \subseteq \R^d$. A function $u \in \HkO{1}{\Omega}$ is called \dots
\begin{enumerate}
\item \dots \emph{discrete on $B$}, if there exists a function $\tilde{u} \in \SkpO{1}{p}{\Elements}$ such that $\restrictN{u}{\Omega \cap B} = \restrictN{\tilde{u}}{\Omega \cap B}$.
\item \dots \emph{harmonic on $B$}, if $\bilinear[a]{u}{v} = 0$ for all $v \in \SkpO{1}{p}{\Elements}$ with $\supp{v} \subseteq B$.
\end{enumerate}

We define the space of \emph{discrete and harmonic functions},
\begin{equation*}
\SHarm{B} := \Set{u \in \HkO{1}{\Omega}}{u \,\, \text{is discrete and harmonic on} \,\, B} \subseteq \HkO{1}{\Omega}.
\end{equation*}
\end{definition}

Note that $\SHarm{B}$ consists of global functions $\fDef{u}{\Omega}{\R}$ that merely happen to have some additional properties on the subset $\Omega \cap B$. Furthermore, we emphasize that $\SHarm{B}$ is an infinite-dimensional space, in general.

The next lemma summarizes the relevant properties of these spaces. Recall the definition of the discrete solution operator $\fDef{\SolutionOp{\Elements}}{\Lp{2}{\Omega}}{\SkpO{1}{p}{\Elements}}$ from \cref{Sol_op_disc} and the cut-off operator $\fDef{\CutoffOp{B}{\delta}}{\Hk{1}{\Omega}}{\Hk{1}{\Omega}}$ from \cref{Cut_off_op}.

\begin{lemma} \label{Space_SHarm_Props}
\begin{enumerate}
\item The subspace $\SHarm{B} \subseteq \HkO{1}{\Omega}$ is closed.
\item For all $B \subseteq B^+ \subseteq \R^d$, there holds $\SHarm{B^+} \subseteq \SHarm{B}$.
\item For all $B,D \subseteq \R^d$ with $B \cap D = \emptyset$ and all $f \in \Lp{2}{\Omega}$ with $\supp{f} \subseteq D$, there holds $\SolutionOp{\Elements} f \in \SHarm{B}$.
\item For all boxes $B \subseteq \R^d$, $\delta>0$ and $u \in \SHarm{B}$, there holds $\CutoffOp{B}{\delta} u \in \SHarm{B}$.
\end{enumerate}
\end{lemma}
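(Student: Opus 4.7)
The plan is to dispatch the four assertions in turn, each by a short argument that unfolds \cref{Space_SHarm} and invokes \cref{Cut_off_op_Props} or \cref{Sol_op_disc} where appropriate. Statements (2) and (3) fall out of the definitions directly, while (1) and (4) need a moment's thought.

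For (1), I would split the two defining conditions. Discreteness on $B$ is equivalent to membership in $\SkpO{1}{p}{\Elements} + V_B$, where $V_B := \{w \in \HkO{1}{\Omega} : \restrictN{w}{\Omega \cap B} = 0\}$ is the kernel of the continuous restriction map $\HkO{1}{\Omega} \to \Lp{2}{\Omega \cap B}$ and hence closed; adding the finite-dimensional space $\SkpO{1}{p}{\Elements}$ preserves closedness by the standard ``closed plus finite-dimensional is closed'' fact. Harmonicity on $B$ is the intersection of the kernels of the continuous linear functionals $u \mapsto \bilinear[a]{u}{v}$ for $v \in \SkpO{1}{p}{\Elements}$ with $\supp{v} \subseteq B$, which is closed as an intersection of closed sets. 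Then $\SHarm{B}$ is the intersection of these two closed sets.

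For (2) and (3): monotonicity (2) is immediate, since a witness $\tilde u \in \SkpO{1}{p}{\Elements}$ for discreteness on $B^+$ also witnesses discreteness on $B \subseteq B^+$, and every test function $v$ with $\supp{v} \subseteq B$ also satisfies $\supp{v} \subseteq B^+$, so harmonicity on $B^+$ forces harmonicity on $B$. For (3), $\SolutionOp{\Elements} f \in \SkpO{1}{p}{\Elements}$ is trivially discrete on any subset (choose $\tilde u := \SolutionOp{\Elements} f$), while harmonicity on $B$ follows because, by \cref{Sol_op_disc}, $\bilinear[a]{\SolutionOp{\Elements} f}{v} = \skalar[\Lp{2}{\Omega}]{f}{v}$, and this $L^2$-pairing vanishes whenever $\supp{v} \subseteq B$ is disjoint from $\supp{f} \subseteq D$.

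For (4), I would combine \cref{Cut_off_op_Props} with the Galerkin orthogonality inherent in harmonicity. The lemma directly yields $\CutoffOp{B}{\delta} u \in \HkO{1}{\Omega}$ together with the local projection property $\restrict{\CutoffOp{B}{\delta} u}{\Omega \cap B} = \restrictN{u}{\Omega \cap B}$, so any witness $\tilde u$ for discreteness of $u$ on $B$ also witnesses discreteness of $\CutoffOp{B}{\delta} u$ on $B$. For harmonicity, I would exploit that $\CutoffFc{B}{\delta} \equiv 1$ on $\Omega \cap B$, so its gradient vanishes there; the product rule then gives $\CutoffOp{B}{\delta} u = u$ and $\gradN{(\CutoffOp{B}{\delta} u)} = \gradN{u}$ a.e.\ on $\Omega \cap B$. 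Since any test function $v \in \SkpO{1}{p}{\Elements}$ with $\supp{v} \subseteq B$ localizes every integrand of $\bilinear[a]{\cdot}{v}$ to $\Omega \cap B$, we conclude $\bilinear[a]{\CutoffOp{B}{\delta} u}{v} = \bilinear[a]{u}{v} = 0$ by the assumed harmonicity of $u$. No part is a serious obstacle; the only non-trivial ingredient is the functional-analytic closedness fact used in (1), while the conceptual heart of (4) is the observation that $\CutoffFc{B}{\delta}$ equals $1$, with vanishing derivatives, precisely on the set $B$ on which both discreteness and harmonicity are being tested.
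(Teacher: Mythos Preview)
Your proposal is correct. The paper proves only item (1) explicitly and leaves (2)--(4) to the reader; your arguments for (2)--(4) are exactly the intended ones. For (1), the paper takes a direct sequential route: it introduces the finite-dimensional (hence closed) subspace $Z := \Set{\restrictN{\tilde u}{\Omega \cap B}}{\tilde u \in \SkpO{1}{p}{\Elements}} \subseteq \Hk{1}{\Omega \cap B}$, then passes a convergent sequence $(u_n) \subseteq \SHarm{B}$ through the restriction map and through the continuous functionals $\bilinear[a]{\cdot}{v}$. Your argument repackages the same two ingredients more structurally, writing the set of functions discrete on $B$ as $\SkpO{1}{p}{\Elements} + V_B$ with $V_B$ a closed kernel, and invoking ``closed plus finite-dimensional is closed''. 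The two approaches are equivalent in content; yours is a bit slicker and avoids naming a sequence, while the paper's is perhaps more transparent about where the $\Hk{1}$-topology enters.
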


\begin{proof}
We only show closedness. Since $\Omega \cap B \subseteq \R^d$ is open, the Sobolev space $\Hk{1}{\Omega \cap B}$ is well-defined. The subset $Z := \Set{\restrictN{\tilde{u}}{\Omega \cap B}}{\tilde{u} \in \SkpO{1}{p}{\Elements}} \subseteq \Hk{1}{\Omega \cap B}$ is a finite-dimensional subspace and thus closed. Note that any given function $u \in \HkO{1}{\Omega}$ is discrete on $B$ (in the sense of \cref{Space_SHarm}), if and only if $\restrictN{u}{\Omega \cap B} \in Z$.

Now, let $(u_n)_{n \in \N} \subseteq \SHarm{B}$ and $u \in \HkO{1}{\Omega}$ with $\norm[\Hk{1}{\Omega}]{u-u_n} \xrightarrow{n} 0$. In particular, for every $n \in \N$, we know that $\restrictN{u_n}{\Omega \cap B} \in Z$ and that $\bilinear[a]{u_n}{v} = 0$, for all $v \in \SkpO{1}{p}{\Elements}$ with $\supp{v} \subseteq B$. The trivial bound $\norm[\Hk{1}{\Omega \cap B}]{u-u_n} \leq \norm[\Hk{1}{\Omega}]{u-u_n} \xrightarrow{n} 0$ and the closedness of $Z$ immediately yield $\restrictN{u}{\Omega \cap B} \in Z$, meaning that $u$ is discrete on $B$. Finally, for all $v \in \SkpO{1}{p}{\Elements}$ with $\supp{v} \subseteq B$, we have
\begin{equation*}
\abs{\bilinear[a]{u}{v}} = \abs{\bilinear[a]{u-u_n}{v}} \cleq \norm[\Hk{1}{\Omega}]{u-u_n} \norm[\Hk{1}{\Omega}]{v} \xrightarrow{n} 0,
\end{equation*}
indicating that $u$ is harmonic on $B$. This concludes the proof of closedness.

\end{proof}

In the remainder of this section, we develop an improved version of the discrete Caccioppoli inequality from \cite[L.3.27]{Angleitner_H_matrices_FEM}. This time we are interested in large polynomial degrees $p \rightarrow \infty$ as well. Therefore, we need to revisit our previous proof and keep track of $p$. Since the elementwise Lagrange interpolant $\fDef{I_{\Elements}^p}{\CkPw{0}{\Elements}}{\Skp{0}{p}{\Elements}}$ from \cite[D.3.19]{Angleitner_H_matrices_FEM} is not suitable for large $p$, we employ an alternative operator:

\begin{lemma} \label{Melenk_Rojik}
There exists a linear operator $\fDef{J_{\Elements}^p}{\Skp{0}{p+2}{\Elements}}{\Skp{0}{p}{\Elements}}$ with the following properties:
\begin{enumerate}
\item \emph{Continuity and boundary values}: For all $v \in \SkpO{1}{p+2}{\Elements}$, there holds $J_{\Elements}^p v \in \SkpO{1}{p}{\Elements}$.

\item \emph{Supports}: For $v \in \Skp{0}{p+2}{\Elements}$, there holds $\supp{J_{\Elements}^p v} \subseteq \supp{v}$.

\item \emph{Error bound}: Let $\CRed := d(d+1)/4+2$. For all $\kappa \in \Skp{0}{1}{\Elements}$, all $u \in \Skp{0}{p}{\Elements}$ and all $T \in \Elements$, there holds the error bound
\begin{equation*}
\sum_{l=0}^{1} \h{T}^l \seminorm[\Hk{l}{T}]{(\identity-J_{\Elements}^p)(\kappa^2 u)} \cleq p^{\CRed} \h{T} \seminorm[\Wkp{1}{\infty}{T}]{\kappa^2} \norm[\Lp{2}{T}]{u}.
\end{equation*}

\end{enumerate}
\end{lemma}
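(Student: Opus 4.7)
The plan is to construct $J_{\Elements}^p$ elementwise from a single reference operator $\hat{J}^p \colon \Pp{p+2}{\hat{T}} \to \Pp{p}{\hat{T}}$ that preserves polynomial traces on every subsimplex of the reference element, in the spirit of the projection-based interpolation of \cite{demkowicz-babuska03, demkowicz-buffa05, demkowicz-cao05, demkowicz08, Melenk_Rojik}, extended from the cases $d \in \{1,2,3\}$ to arbitrary spatial dimension. A polynomial $\hat{v} \in \Pp{p+2}{\hat{T}}$ is decomposed hierarchically along the skeleton of $\hat{T}$: vertex values are retained, the edge trace is projected onto a degree-$p$ polynomial vanishing at the endpoints, the $2$-face trace is projected onto one vanishing on the edges, and so on, terminating in an interior bubble projection. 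The mesh operator is then obtained by pullback, $(J_{\Elements}^p v)|_T := \hat{J}^p(v \circ F_T) \circ F_T^{-1}$.

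\textbf{Properties and error bound.} Because the trace of $\hat{J}^p \hat{v}$ on any face of $\hat{T}$ depends only on the trace of $\hat{v}$ on that face, the elementwise definition assembles to a globally continuous piecewise polynomial; the same argument applied to boundary faces gives $J_{\Elements}^p v \in \SkpO{1}{p}{\Elements}$ whenever $v \in \SkpO{1}{p+2}{\Elements}$, and support preservation is immediate from the locality of the construction. For the error bound, the key observation is that $\hat{J}^p$ is idempotent on $\Pp{p}{\hat{T}}$, hence $J_{\Elements}^p$ is idempotent on $\Pp{p}{T}$ for each element $T$. Since $\kappa(x_T)^2 u \in \Pp{p}{T}$, one has
\begin{equation*}
(\identity - J_{\Elements}^p)(\kappa^2 u)|_T = (\identity - J_{\Elements}^p)\bigl((\kappa^2 - \kappa(x_T)^2)\, u\bigr)|_T.
\end{equation*}
Since $\kappa$ is affine on $T$, the pointwise estimate $\norm[\Lp{\infty}{T}]{\kappa^2 - \kappa(x_T)^2} \leq \h{T}\, \seminorm[\Wkp{1}{\infty}{T}]{\kappa^2}$ controls the offending factor; combining this with a $p$-explicit stability bound $\seminorm[\Hk{l}{\hat{T}}]{\hat{J}^p \hat{v}} \cleq p^{\CRed}\, \norm[\Lp{2}{\hat{T}}]{\hat{v}}$ for $l \in \{0,1\}$, together with the triangle inequality applied to $\identity - \hat{J}^p$ and affine scaling back to $T$, delivers the claimed estimate.

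\textbf{Main obstacle.} The technical heart of the proof is the construction of $\hat{J}^p$ in arbitrary dimension together with the explicit stability constant $p^{\CRed}$. This requires polynomial-preserving lifting operators from the boundary of each $k$-subsimplex into its interior that preserve zero-boundary data on the $(k-1)$-skeleton and carry $p$-explicit norm bounds. Building such liftings inductively level by level and accumulating a factor on the order of $p^{(k+1)/2}$ per skeleton stratum leads to the exponent $\CRed = d(d+1)/4 + 2$. This $p$-explicit, dimension-independent lifting construction is the substantive novel contribution underlying the lemma; by comparison, the verification of properties (1)-(3) from a reference operator with the correct trace-preservation and stability is routine.
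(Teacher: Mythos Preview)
Your proposal is correct and follows essentially the same route as the paper. The paper organizes the construction of $\hat{J}^p$ as an induction on the spatial dimension $d$ (apply $\hat{J}^p_{d-1}$ on each $(d-1)$-face, lift with a polynomial-preserving operator $\hat{L}$, then add an interior $L^2$-projection onto $\PpO{p}{\hat{T}}$), which when unfolded is precisely your skeleton-by-skeleton projection-based interpolation; the paper likewise uses the projection property to replace $\kappa^2 u$ by $\kappa(0)^2 u$, bounds the remainder via Taylor, applies the $L^2$-stability $\norm[\Lp{2}{\hat{T}}]{\hat{J}^p f} \cleq p^{d(d+1)/4}\norm[\Lp{2}{\hat{T}}]{f}$ together with an inverse inequality for the extra $p^2$, and scales back to $T$.
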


For the sake of readability, we postpone the lengthy proof of \cref{Melenk_Rojik} to \cref{Sec_Rojik_proof} further below. Furthermore, we mention that the value of the constant $\CRed$ is not optimal. (The subscript ``red'' is reminiscent of the fact that the the operator $J_{\Elements}^p$ \emph{red}uces the polynomial degree of its input.)

For the subsequent revision of \cite[L.3.27]{Angleitner_H_matrices_FEM}, we remind the reader of our definition of \emph{inflated clusters}:
\begin{equation*}
\forall \ElementsB \subseteq \Elements: \forall \delta > 0: \quad \quad \inflateN{\ElementsB}{\delta} := \Set{T \in \Elements}{\exists S \in \ElementsB: \norm[2]{x_T-x_S} \leq \delta}.
\end{equation*}

\begin{lemma} \label{Space_SHarm_Cacc_old}
Let $\ElementsB \subseteq \Elements$ be a collection of elements and $\delta>0$ be a parameter satisfying $4\CShape^3\hMax{\ElementsB} \leq \delta \cleq 1$. Let $u \in \SkpO{1}{p}{\Elements}$ be a function that satisfies $\bilinear[a]{u}{v} = 0$, for all $v \in \SkpO{1}{p}{\Elements}$ with $\supp{v} \subseteq \bigcup\inflateN{\ElementsB}{\delta}$. Then, with the constant $\CRed \geq 2$ from \cref{Melenk_Rojik}, there holds the Caccioppoli inequality
\begin{equation*}
\delta\seminorm[\Hk{1}{\ElementsB}]{u} \cleq p^{\CRed} \norm[\Lp{2}{\inflateN{\ElementsB}{\delta}}]{u}.
\end{equation*}
\end{lemma}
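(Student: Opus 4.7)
The plan is to follow the classical discrete Caccioppoli strategy of \cite[L.3.27]{Angleitner_H_matrices_FEM}, but with the $p$-explicit operator $J_{\Elements}^p$ from \cref{Melenk_Rojik} replacing the Lagrange interpolation used there. First I would construct a piecewise-linear mesh cut-off $\eta \in \Skp{0}{1}{\Elements}$ via nodal $\Elements$-interpolation of the Lipschitz function $\tilde{\eta}(x) := \max(0, 1 - \dist[2]{x}{\bigcup\ElementsB}/\delta)$; shape-regularity together with the hypothesis $4\CShape^3\hMax{\ElementsB} \leq \delta$ ensures $\eta \equiv 1$ on $\bigcup\ElementsB$, $\supp{\eta} \subseteq \bigcup\inflateN{\ElementsB}{\delta}$, $0 \leq \eta \leq 1$, and both the uniform bound $\seminorm[\Wkp{1}{\infty}{T}]{\eta^2} \lesssim \delta^{-1}$ and the sharper elementwise bound $\seminorm[\Wkp{1}{\infty}{T}]{\eta^2} \lesssim \norm[\Lp{\infty}{T}]{\eta}/\delta$. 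Then I pass to the discrete test function $v := J_{\Elements}^p(\eta^2 u)$; items (1)--(2) of \cref{Melenk_Rojik} give $v \in \SkpO{1}{p}{\Elements}$ with $\supp{v} \subseteq \bigcup\inflateN{\ElementsB}{\delta}$, so the harmonicity assumption on $u$ yields $\bilinear[a]{u}{v} = 0$.

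Coercivity of $a_1$, the identity $\eta^2 \nabla u = \nabla(\eta^2 u) - 2\eta u \nabla\eta$, and $\bilinear[a]{u}{v} = 0$ combine to give the master inequality
\begin{equation*}
\alpha_1 \norm[\Lp{2}{\Omega}]{\eta\nabla u}^2 \leq \skalar[\Lp{2}{\Omega}]{a_1\nabla u}{\nabla(\eta^2 u - v)} - \skalar[\Lp{2}{\Omega}]{a_2 \* \nabla u}{v} - \skalar[\Lp{2}{\Omega}]{a_3 u}{v} - 2\skalar[\Lp{2}{\Omega}]{\eta u \, a_1\nabla u}{\nabla \eta},
\end{equation*}
whose four right-hand-side contributions I label (I)--(IV) in order. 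Term (IV) is the classical Caccioppoli term, handled by Cauchy--Schwarz, $\norm[\Lp{\infty}{\Omega}]{\nabla \eta} \lesssim \delta^{-1}$, and Young absorption into $\frac{\alpha_1}{4}\norm[\Lp{2}{\Omega}]{\eta\nabla u}^2 + C\delta^{-2}\norm[\Lp{2}{\inflateN{\ElementsB}{\delta}}]{u}^2$. Terms (II) and (III) follow the same pattern after noting $\norm[\Lp{2}{\Omega}]{v} \leq \norm[\Lp{2}{\Omega}]{\eta^2 u} + \norm[\Lp{2}{\Omega}]{\eta^2 u - v} \lesssim p^{\CRed}\norm[\Lp{2}{\inflateN{\ElementsB}{\delta}}]{u}$ (which follows from item (3) of \cref{Melenk_Rojik} together with $h_T\seminorm[\Wkp{1}{\infty}{T}]{\eta^2} \lesssim 1$); the coercivity margin $\alpha_1 > \CPcre^2(\norm[\Lp{\infty}{\Omega}]{a_2} + \norm[\Lp{\infty}{\Omega}]{a_3})$ absorbs the residual $\norm[\Lp{2}{\Omega}]{\eta\nabla u}$-factors.

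The technical heart is Term (I). Since $\seminorm[\Wkp{1}{\infty}{T}]{\eta^2} = 0$ on $T \in \ElementsB$ (where $\eta \equiv 1$), the error $\eta^2 u - v$ is supported in the transition cluster $\inflateN{\ElementsB}{\delta} \setminus \ElementsB$, and I would split that region by element size. On \emph{large} transition elements ($h_T \geq \delta$) one has $\seminorm[\Wkp{1}{\infty}{T}]{\eta^2} \lesssim h_T^{-1}$, and combining this with the $p$-explicit inverse estimate $\norm[\Lp{2}{T}]{\nabla u} \lesssim p^2 h_T^{-1}\norm[\Lp{2}{T}]{u}$ yields an elementwise contribution $\lesssim p^{\CRed+2} h_T^{-2}\norm[\Lp{2}{T}]{u}^2 \leq p^{2\CRed}\delta^{-2}\norm[\Lp{2}{T}]{u}^2$; this is precisely the ``reduction to an inverse inequality on large elements'' flagged in \cref{SSec_Proof_overview}. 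On \emph{small} transition elements ($h_T < \delta$) a direct inverse estimate would cost a factor $\delta/h_T$, so I subdivide by $\norm[\Lp{\infty}{T}]{\eta}$: when $\norm[\Lp{\infty}{T}]{\eta} \gtrsim h_T/\delta$ the piecewise linearity of $\eta$ gives $\min_T \eta \gtrsim \norm[\Lp{\infty}{T}]{\eta}$, so $\norm[\Lp{2}{T}]{\nabla u} \lesssim \norm[\Lp{\infty}{T}]{\eta}^{-1}\norm[\Lp{2}{T}]{\eta\nabla u}$, and this trade against the sharper bound $\seminorm[\Wkp{1}{\infty}{T}]{\eta^2} \lesssim \norm[\Lp{\infty}{T}]{\eta}/\delta$ cancels the $\norm[\Lp{\infty}{T}]{\eta}$-factor and closes via Young to $\epsilon\norm[\Lp{2}{T}]{\eta\nabla u}^2 + Cp^{2\CRed}\delta^{-2}\norm[\Lp{2}{T}]{u}^2$; when $\norm[\Lp{\infty}{T}]{\eta} \lesssim h_T/\delta$ the refined $\seminorm[\Wkp{1}{\infty}{T}]{\eta^2} \lesssim h_T/\delta^2$ together with the standard inverse estimate again closes to $\lesssim p^{2\CRed}\delta^{-2}\norm[\Lp{2}{T}]{u}^2$.

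Summing over elements, absorbing the collected $\epsilon\norm[\Lp{2}{\Omega}]{\eta\nabla u}^2$ fractions into the LHS, and using $\eta \equiv 1$ on $\bigcup\ElementsB$ yields $\delta^2\seminorm[\Hk{1}{\ElementsB}]{u}^2 \leq \delta^2\norm[\Lp{2}{\Omega}]{\eta\nabla u}^2 \lesssim p^{2\CRed}\norm[\Lp{2}{\inflateN{\ElementsB}{\delta}}]{u}^2$, which gives the claim after taking square roots. The principal obstacle is the case analysis on small transition elements in Term (I): avoiding the $\delta/h_T$-blow-up inherent in a naive inverse-estimate argument requires both the sharper elementwise bound $\seminorm[\Wkp{1}{\infty}{T}]{\eta^2} \lesssim \norm[\Lp{\infty}{T}]{\eta}/\delta$ and the local, pointwise trade between $\nabla u$ and $\eta\nabla u$; everything else is routine bookkeeping, with the explicit $p^{\CRed}$ appearing precisely as the factor in item (3) of \cref{Melenk_Rojik}.
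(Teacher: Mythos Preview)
Your argument is essentially the paper's: the same discrete cut-off $\eta$ (the paper calls it $\kappa$, citing \cite[L.3.18]{Angleitner_H_matrices_FEM}), the same test function $v=J_{\Elements}^p(\eta^2 u)$, the same coercivity/absorption structure. Two points where the paper is cleaner, and one small gap in your write-up:

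\textbf{The three-case split is unnecessary.} Instead of your $(h_T\ge\delta)$/$(h_T<\delta,\ \norm[\Lp{\infty}{T}]{\eta}\gtrsim h_T/\delta)$/$(h_T<\delta,\ \norm[\Lp{\infty}{T}]{\eta}\lesssim h_T/\delta)$ trichotomy, the paper observes in one line that $\norm[\Lp{\infty}{T}]{\eta}\le \min_T\eta + Ch_T\delta^{-1}$ by Taylor, so $\norm[\Lp{\infty}{T}]{\eta}\seminorm[\Hk{1}{T}]{u}\lesssim \norm[\Lp{2}{T}]{\eta\nabla u}+p^2\delta^{-1}\norm[\Lp{2}{T}]{u}$ after an inverse inequality. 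This handles all elements uniformly. Also note that your ``large element'' regime is not really a separate case here: the hypothesis $4\CShape^3\hMax{\ElementsB}\le\delta$ together with $\hMax{\inflateN{\ElementsB}{\delta}}\le\hMax{\ElementsB}+\CShape\delta$ already forces $h_T\lesssim\delta$ on all of $\inflateN{\ElementsB}{\delta}$. The passage in \cref{SSec_Proof_overview} you cite about ``reduction to an inverse inequality on large elements'' refers to the \emph{next} lemma (\cref{Space_SHarm_Cacc}), not this one.

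\textbf{Term (II) as you wrote it does not close.} In your master inequality the lower-order terms carry $v$, not $\eta^2 u$, so $\skalar[\Lp{2}{\Omega}]{a_2\cdot\nabla u}{v}$ gives $\norm[\Lp{2}{\inflateN{\ElementsB}{\delta}}]{\nabla u}\norm[\Lp{2}{\Omega}]{v}$, and the first factor is \emph{not} controlled by $\norm[\Lp{2}{\Omega}]{\eta\nabla u}$ (on elements where $\eta$ is tiny this blows up). The paper avoids this by organising the computation around $\bilinear[a]{u}{\kappa^2 u}=\bilinear[a]{u}{\kappa^2 u-v}$: then all lower-order pairings carry $\kappa^2 u$, and one factor of $\kappa$ can always be shifted onto $\nabla u$. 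In your framework the fix is equally easy---split $v=\eta^2 u+(v-\eta^2 u)$ in (II) and treat the correction exactly as you do for (I)---but the sentence as written, bounding only $\norm[\Lp{2}{\Omega}]{v}$, is not enough. (The coercivity margin $\alpha_1>\CPcre^2(\dots)$ you invoke is irrelevant here; one only needs $\alpha_1>0$ and Young.)
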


\begin{proof}
According to \cite[L.3.18]{Angleitner_H_matrices_FEM}, the assumption $4\CShape^3\hMax{\ElementsB} \leq \delta \cleq 1$ allows us to construct a \emph{discrete cut-off function} $\kappa$ with the following properties:
\begin{equation*}
\kappa \in \Skp{1}{1}{\Elements}, \quad \quad \supp{\kappa} \subseteq \bigcup\inflateN{\ElementsB}{\delta}, \quad \quad \restrictN{\kappa}{\bigcup\ElementsB} \equiv 1, \quad \quad 0 \leq \kappa \leq 1, \quad \quad \forall l \in \set{0,1}: \seminorm[\Wkp{l}{\infty}{\Omega}]{\kappa} \cleq \delta^{-l}.
\end{equation*}

Let $u \in \SkpO{1}{p}{\Elements}$ as above. We consider the function $v := J_{\Elements}^p(\kappa^2 u)$, where $\fDef{J_{\Elements}^p}{\Skp{0}{p+2}{\Elements}}{\Skp{0}{p}{\Elements}}$ denotes the approximation operator from \cref{Melenk_Rojik}. Since $\kappa^2 u \in \SkpO{1}{p+2}{\Elements}$, we know that $v \in \SkpO{1}{p}{\Elements}$ and that $\supp{v} \subseteq \supp{\kappa^2 u} \subseteq \bigcup\inflateN{\ElementsB}{\delta}$. In particular, $v$ is a viable test function and we obtain the identity $\bilinear[a]{u}{v} = 0$. Using the constant $\CRed \geq 2$ defined in \cref{Melenk_Rojik}, we compute
\begin{equation*}
\begin{array}{rcccl}
\bilinear[a]{u}{\kappa^2 u} &=& \bilinear[a]{u}{\kappa^2 u - v} &=& \bilinear[a]{u}{(\identity-J_{\Elements}^p)(\kappa^2 u)} \\
&\stackrel{\cref{Bilinear_form}}{\cleq}& \sum\limits_{T \in \inflateN{\ElementsB}{\delta}} \norm[\Hk{1}{T}]{u} \norm[\Hk{1}{T}]{(\identity-J_{\Elements}^p)(\kappa^2 u)} &\stackrel{\cref{Melenk_Rojik}}{\cleq}& p^{\CRed} \sum\limits_{T \in \inflateN{\ElementsB}{\delta}} \seminorm[\Wkp{1}{\infty}{T}]{\kappa^2} \norm[\Hk{1}{T}]{u} \norm[\Lp{2}{T}]{u} \\
&\cleq& p^{\CRed} \delta^{-1} \sum\limits_{T \in \inflateN{\ElementsB}{\delta}} \norm[\Lp{\infty}{T}]{\kappa} \norm[\Hk{1}{T}]{u} \norm[\Lp{2}{T}]{u}.
\end{array}
\end{equation*}

Then, using Taylor's Theorem, a polynomial inverse inequality \cite{Ditzian_Inverse_inequality},  and the relation $\delta \cleq 1 \leq p$, we find that
\begin{equation*}
\norm[\Lp{\infty}{T}]{\kappa} \norm[\Hk{1}{T}]{u} \cleq \norm[\Lp{2}{T}]{u} + \(\min_{x \in T} \abs{\kappa(x)} + \h{T}\delta^{-1}\) \seminorm[\Hk{1}{T}]{u} \cleq p^2 \delta^{-1} \norm[\Lp{2}{T}]{u} + \norm[\Lp{2}{T}]{\kappa\gradN{u}},
\end{equation*}
which leads us to the following bound:
\begin{equation*}
\bilinear[a]{u}{\kappa^2 u} \cleq p^{\CRed} \delta^{-1} \sum_{T \in \inflateN{\ElementsB}{\delta}} \norm[\Lp{\infty}{T}]{\kappa} \norm[\Hk{1}{T}]{u} \norm[\Lp{2}{T}]{u} \cleq p^{\CRed+2} \delta^{-2} \norm[\Lp{2}{\inflateN{\ElementsB}{\delta}}]{u}^2 + p^{\CRed} \delta^{-1} \norm[\Lp{2}{\Omega}]{\kappa\gradN{u}} \norm[\Lp{2}{\inflateN{\ElementsB}{\delta}}]{u}.
\end{equation*}

On the other hand, we can use the definition of $\bilinear[a]{\cdot}{\cdot}$ from \cref{Bilinear_form} to expand the term $\bilinear[a]{u}{\kappa^2 u}$. One of the summands is amenable to the coercivity of the PDE coefficient $a_1$:
\begin{eqnarray*}
\norm[\Lp{2}{\Omega}]{\kappa\gradN{u}}^2 &\cleq& \skalar[\Lp{2}{\Omega}]{a_1\kappa\gradN{u}}{\kappa\gradN{u}} \\
&=& \bilinear[a]{u}{\kappa^2 u} - 2\skalar[\Lp{2}{\Omega}]{a_1\kappa\gradN{u}}{u\gradN{\kappa}} - \skalar[\Lp{2}{\Omega}]{a_2 \* \gradN{u}}{\kappa^2 u} - \skalar[\Lp{2}{\Omega}]{a_3 u}{\kappa^2 u} \\
&\stackrel{\delta \cleq 1 \leq p}{\cleq}& p^{\CRed+2} \delta^{-2} \norm[\Lp{2}{\inflateN{\ElementsB}{\delta}}]{u}^2 + p^{\CRed} \delta^{-1} \norm[\Lp{2}{\Omega}]{\kappa\gradN{u}} \norm[\Lp{2}{\inflateN{\ElementsB}{\delta}}]{u} \\
&\stackrel{\forall \epsilon>0}{\cleq}& C_{\epsilon} p^{2\CRed} \delta^{-2} \norm[\Lp{2}{\inflateN{\ElementsB}{\delta}}]{u}^2 + \epsilon\norm[\Lp{2}{\Omega}]{\kappa\gradN{u}}^2.
\end{eqnarray*}

Since the Young parameter $\epsilon$ can be chosen arbitrarily small, we may absorb the last summand in the left-hand side of the overall inequality. Finally, since $\kappa \equiv 1$ on $\ElementsB$, we obtain the desired Caccioppoli inequality:
\begin{equation*}
\seminorm[\Hk{1}{\ElementsB}]{u} \leq \norm[\Lp{2}{\Omega}]{\kappa\gradN{u}} \cleq p^{\CRed} \delta^{-1} \norm[\Lp{2}{\inflateN{\ElementsB}{\delta}}]{u}.
\end{equation*}

\end{proof}

We close this section with the promised improvement of the discrete Caccioppoli inequality. This time, it will be phrased in terms of the new spaces $\SHarm{B}$ and $\SHarm{\inflateN{B}{\delta}}$, where $B \subseteq \R^d$ is an axes-parallel box, $\delta>0$ is a given parameter, and $\inflateN{B}{\delta} \subseteq \R^d$ is the inflated box in the sense of \cref{Box_infl}. Most importantly, \emph{no} lower bound on $\delta$ is assumed.

The basic idea is to split the elements $T \in \Elements$ touching the inner box $B$ into two groups, based on the relative size of $\h{T}$ and $\delta$. The first group contains the elements that are small in relation to $\delta$ and can therefore be treated with \cref{Space_SHarm_Cacc_old}. The second group contains the larger elements (relative to $\delta$) and we can use an inverse inequality to derive the desired bound. However, since the larger elements might not be fully contained in the outer box $\inflateN{B}{\delta}$, we have to break them up into smaller pieces first.

\begin{lemma} \label{Subdivide_element}
Denote by $\CShape \geq 1$ the shape-regularity constant from \cref{Shape_regularity}. Let $T \in \Elements$ and $\delta>0$ be such that $16\CShape^4 \h{T} > \delta$. Then, there exists a mesh $\ElementsS \subseteq \Pow{T}$ with the following properties:
\begin{enumerate}
\item For all $S,\tilde{S} \in \Elements$ with $S \neq \tilde{S}$, there holds $S \cap \tilde{S} = \emptyset$. Furthermore, $\closureN{\bigcup\ElementsS} = \closureN{T}$.
\item There hold the bounds $\hMax{\ElementsS} \leq \delta \leq C(d,\CShape) \hMin{\ElementsS}$.
\item The mesh $\ElementsS$ is shape-regular in the sense of \cref{Shape_regularity} with a constant $\CShapeTilde = C(d,\CShape)$.
\end{enumerate}
\end{lemma}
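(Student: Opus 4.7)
The plan is to produce $\ElementsS$ by pulling $T$ back to the reference simplex $\hat T$ through $F_T^{-1}$, applying a suitable number of steps of a canonical simplicial refinement, and then pushing the resulting sub-simplices forward again through $F_T$. Shape-regularity of $\Elements$ bounds the linear distortion of $F_T$ in terms of $\CShape$, so geometric quantities on $\hat T$ transfer to $T$ with multiplicative constants depending only on $d$ and $\CShape$. The refinement depth will be tuned so that sub-element diameters end up in a window of the form $[\delta/C,\delta]$ for a constant $C = C(d,\CShape)$, which delivers both inequalities of item (2) simultaneously.

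First I would dispatch the easy regime $\h{T} \leq \delta$: take $\ElementsS := \{T\}$, so that items (1) and (3) are inherited directly from $T$, and item (2) reads $\hMax{\ElementsS} = \h{T} \leq \delta < 16\CShape^4 \h{T} = 16\CShape^4 \hMin{\ElementsS}$ by the standing hypothesis, i.e., it holds with $C(d,\CShape) := 16\CShape^4$. In the genuine regime $\h{T} > \delta$ I would set $k := \lceil \log_2(C_0\h{T}/\delta) \rceil \geq 1$ for a suitable $C_0 = C_0(d,\CShape)$ and apply $k$ iterations of a fixed $d$-dimensional simplicial refinement rule to $\hat T$: red/Bey refinement when $d \in \{2,3\}$, or Kuhn/Freudenthal refinement in general dimension. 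After $k$ steps each reference sub-simplex $\hat S \subseteq \hat T$ satisfies $\diam[2]{\hat S} \ceq 2^{-k}\diam[2]{\hat T}$, and the pushed-forward simplices $S := F_T(\hat S)$ have $\h{S} \ceq 2^{-k}\h{T}$ with constants of the advertised type. The choice of $k$ places $2^{-k}\h{T}$ in the window $(\delta/(2C_0),\delta/C_0]$, which gives item (2). Item (1) is immediate from the fact that $F_T$ is an affine bijection of $\hat T$ onto $T$ and the refinement partitions $\hat T$.

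The main obstacle is item (3): the shape-regularity constant of $\ElementsS$ must be controlled independently of the refinement depth $k$, i.e., independently of how small $\delta$ is compared to $\h{T}$. This is the classical quality-preservation question for iterated simplicial refinement. In dimensions $d \leq 3$ the result is standard (red refinement in 2D, Bey's eight-tetrahedron scheme in 3D). In arbitrary dimension one invokes that Kuhn/Freudenthal-type refinement produces only finitely many similarity classes of descendants of a given simplex, so the resulting sub-mesh of $\hat T$ is shape-regular with a constant depending only on $d$. Composing with the affine map $F_T$, whose linear part has singular-value ratio bounded by a power of $\CShape$ by \cref{Shape_regularity}, yields item (3) with $\CShapeTilde = C(d,\CShape)$ and completes the argument.
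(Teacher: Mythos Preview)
Your proposal is correct and follows essentially the same strategy as the paper: pull back to the reference simplex via $F_T$, apply a simplicial refinement scheme that generates only finitely many congruence classes (so that shape-regularity is uniform in the refinement depth), choose the depth to land the sub-element diameters in a window comparable to $\delta$, and push forward. The only cosmetic difference is that the paper invokes Edelsbrunner's one-shot $M^d$-partition of $\hat T$ with $M := \lceil C\hat r^{-1}\h{T}\delta^{-1}\rceil$ rather than your iterated dyadic Kuhn/Freudenthal refinement, and it absorbs your separate case $\h{T}\le\delta$ into the choice of $M$ together with the hypothesis $16\CShape^4\h{T}>\delta$ (which keeps the $+1$ from the ceiling harmless in the lower bound for $\hMin{\ElementsS}$).
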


\begin{proof}
Denote by $\hat{T} \subseteq \R^d$ the reference simplex, let $M \in \N$ and set $J := \set{1,\dots,M^d}$. In \cite{Edelsbrunner}, it was shown that $\hat{T}$ can be partitioned into $M^d$ simplices $\hat{S}_1,\dots,\hat{S}_{M^d} \subseteq \hat{T}$ of at most $d!/2$ congruence classes, such that $\meas{\hat{S}_j} = M^{-d}\meas{\hat{T}}$. Since the number of congruence classes is uniformly bounded (independent of $M$), one can then show that $C^{-1} \leq M\h{\hat{S}_j} \leq C$, for some constant $C = C(d) \geq 1$.

Now, denote by $\fDef{F_T}{\hat{T}}{T}$ the affine diffeomorphism from \cref{Mesh}. Without proof, we mention that $\norm[2]{\gradN{F_T}} \leq \hat{r}^{-1}\h{T}$, where $\hat{r}>0$ is the radius of the largest ball that can be inscribed into $\hat{T}$. Similarly, exploiting the shape regularity of the mesh $\Elements$ (cf. \cref{Shape_regularity}), there holds $\norm[2]{\grad{F_T^{-1}}} \leq (\CShape^{-1}\h{T})^{-1}\h{\hat{T}} \cleq \h{T}^{-1}$. Then, using the ceiling function $\lceil \cdot \rceil$, we choose
\begin{equation*}
M := \lceil C\hat{r}^{-1}\h{T}\delta^{-1} \rceil \in \N
\end{equation*}
and argue that the system $\ElementsS := \Set{F_T(\hat{S}_j)}{j \in J}$ has the desired properties: Item $(1)$ follows from the fact that the simplices $\hat{S}_j$ partition $\hat{T}$ and item $(3)$ follows from the uniform bound on the number of congruence classes. Finally, to see item $(2)$, we compute
\begin{equation*}
\hMax{\ElementsS} = \max_{j \in J} \h{F_T(\hat{S}_j)} \leq \max_{j \in J} \norm[2]{\gradN{F_T}} \h{\hat{S}_j} \leq C\hat{r}^{-1} \h{T} M^{-1} \stackrel{\text{Def.}M}{\leq} \delta.
\end{equation*}

An analogous computation involving the inverse mapping $\fDef{F_T^{-1}}{T}{\hat{T}}$ reveals the bound $\min_{j \in J} \h{\hat{S}_j} \cleq \h{T}^{-1}\hMin{\ElementsS}$. Furthermore, we invoke the assumption $16\CShape^4 \h{T} > \delta$ to conclude that $M \leq C\hat{r}^{-1}\h{T}\delta^{-1} + 1 \cleq \h{T}\delta^{-1}$. Combining both, we end up with the lower bound
\begin{equation*}
\h{T}^{-1}\delta \cleq C^{-1} M^{-1} \leq \min_{j \in J} \h{\hat{S}_j} \cleq \h{T}^{-1}\hMin{\ElementsS},
\end{equation*}
which readily yields $\delta \cleq \hMin{\ElementsS}$. This concludes the proof.

\end{proof}

Now that we know how to break up an element $T \in \Elements$ into smaller pieces, we present the updated proof of the discrete Caccioppoli inequality.

\begin{lemma} \label{Space_SHarm_Cacc}
Let $B \subseteq \R^d$ be a box and $\delta>0$ with $\delta \cleq 1$. Denote by $\CRed \geq 2$ the constant from \cref{Melenk_Rojik}. Then, for all $u \in \SHarm{\inflateN{B}{\delta}}$, there holds the \emph{Caccioppoli inequality}
\begin{equation*}
\delta \seminorm[\Hk{1}{\Omega \cap B}]{u} \cleq p^{\CRed} \norm[\Lp{2}{\Omega \cap \inflateN{B}{\delta}}]{u}.
\end{equation*}
\end{lemma}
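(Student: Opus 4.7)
The plan is to follow the strategy sketched just above the statement: partition the elements meeting the inner box $B$ by size, treat small ones via the old Caccioppoli inequality (Lemma \ref{Space_SHarm_Cacc_old}), and treat large ones by subdivision plus a polynomial inverse inequality.

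First I would set $\Elements_B := \Set{T \in \Elements}{T \cap B \neq \emptyset}$ and split $\Elements_B = \Elements_{B,s} \cup \Elements_{B,\ell}$ along the threshold of Lemma \ref{Subdivide_element}: $T \in \Elements_{B,s}$ iff $16\CShape^4\h{T} \leq \delta$. Because $\inflateN{B}{\delta}$ is an $\ell^\infty$-inflation, every $T \in \Elements_B$ with $\h{T} \leq \delta$ satisfies $T \subseteq \inflateN{B}{\delta}$; in particular every $T \in \Elements_{B,s}$ lies inside $\inflateN{B}{\delta}$. The seminorm on $\Omega \cap B$ then splits as
\begin{equation*}
\seminorm[\Hk{1}{\Omega \cap B}]{u}^2 \leq \seminorm[\Hk{1}{\Elements_{B,s}}]{u}^2 + \sum_{T \in \Elements_{B,\ell}} \seminorm[\Hk{1}{T \cap B}]{u}^2.
\end{equation*}

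For the first term I would pick a representative $\tilde{u} \in \SkpO{1}{p}{\Elements}$ that agrees with $u$ on $\Omega \cap \inflateN{B}{\delta}$ (available from $u \in \SHarm{\inflateN{B}{\delta}}$), fix an auxiliary parameter $\delta' \ceq \delta$ satisfying $4\CShape^3\hMax{\Elements_{B,s}} \leq \delta'$ and, crucially, $\bigcup\inflateN{\Elements_{B,s}}{\delta'} \subseteq \inflateN{B}{\delta}$, and apply Lemma \ref{Space_SHarm_Cacc_old} to $\tilde{u}$ with cluster $\Elements_{B,s}$. The required harmonicity of $\tilde{u}$ against $v \in \SkpO{1}{p}{\Elements}$ with $\supp{v} \subseteq \bigcup\inflateN{\Elements_{B,s}}{\delta'}$ follows from harmonicity of $u$ on $\inflateN{B}{\delta}$, because $u = \tilde{u}$ on $\supp{v}$. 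Lemma \ref{Space_SHarm_Cacc_old} then delivers
\begin{equation*}
\delta\seminorm[\Hk{1}{\Elements_{B,s}}]{u} \cleq p^{\CRed} \norm[\Lp{2}{\Omega \cap \inflateN{B}{\delta}}]{u}.
\end{equation*}

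For the second term I would apply Lemma \ref{Subdivide_element} to each $T \in \Elements_{B,\ell}$ to obtain a shape-regular auxiliary mesh $\ElementsS_T \subseteq \Pow{T}$ with $\hMax{\ElementsS_T} \leq \delta \cleq \hMin{\ElementsS_T}$. For each $S \in \ElementsS_T$ with $S \cap B \neq \emptyset$, the condition $\h{S} \leq \delta$ forces $S \subseteq \inflateN{B}{\delta}$, so $u|_S = \tilde{u}|_S$ is a polynomial of degree at most $p$. A standard polynomial inverse inequality yields $\seminorm[\Hk{1}{S}]{u} \cleq p^2 \h{S}^{-1} \norm[\Lp{2}{S}]{u} \cleq p^2 \delta^{-1} \norm[\Lp{2}{S}]{u}$. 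Since the $S$ with $S \cap B \neq \emptyset$ cover $T \cap B$ and are pairwise disjoint up to measure zero, summing over $S \in \ElementsS_T$ and then over $T \in \Elements_{B,\ell}$ gives
\begin{equation*}
\delta^2 \sum_{T \in \Elements_{B,\ell}} \seminorm[\Hk{1}{T \cap B}]{u}^2 \cleq p^4 \norm[\Lp{2}{\Omega \cap \inflateN{B}{\delta}}]{u}^2.
\end{equation*}
Combining the two estimates and absorbing $p^2 \leq p^{\CRed}$ closes the argument.

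The main obstacle I anticipate is the bookkeeping in the small-element step: a neighbor of a small element picked up by the element-inflation $\inflateN{\Elements_{B,s}}{\delta'}$ can itself be large and might, \emph{a priori}, extend outside $\inflateN{B}{\delta}$, which would break the transfer of harmonicity from $u$ to $\tilde{u}$. Securing the inclusion $\bigcup\inflateN{\Elements_{B,s}}{\delta'} \subseteq \inflateN{B}{\delta}$, possibly by mildly shrinking $B$ before invoking Lemma \ref{Space_SHarm_Cacc_old} and recovering the full seminorm by a triangle inequality, is the delicate technical point; the large-element step by contrast is routine once the uniform shape-regularity of $\ElementsS_T$ granted by Lemma \ref{Subdivide_element} is in hand.
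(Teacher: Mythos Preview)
Your proposal is correct and matches the paper's proof closely: the same small/large split with threshold $16\CShape^4 \h{T} \leq \delta$, the same application of Lemma~\ref{Space_SHarm_Cacc_old} on the small elements via a discrete representative $\tilde u$, and the same subdivision-plus-inverse-inequality argument on the large ones. The obstacle you flag is resolved in the paper not by shrinking $B$ but by taking $\delta' = \delta/(4\CShape)$ together with the shape-regularity consequence $\hMax{\inflateN{\ElementsB}{\delta'}} \leq \hMax{\ElementsB} + \CShape\delta'$ (from \cite[L.3.15]{Angleitner_H_matrices_FEM}), which guarantees that any element reached by the cluster inflation stays small enough for $\bigcup\inflateN{\ElementsB}{\delta'} \subseteq \inflateN{B}{\delta}$ to hold directly.
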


\begin{proof}
First, we apply \cref{Space_SHarm_Cacc_old} to the collection $\ElementsB := \Set{T \in \Elements}{\closureN{T} \cap \closureN{B} \neq \emptyset, 16\CShape^4\h{T} \leq \delta}$ and the parameter $\epsilon := \delta/(4\CShape) > 0$. It is not difficult to see that $4\CShape^3\hMax{\ElementsB} \leq \epsilon \cleq 1$, meaning that $\epsilon$ is indeed a valid parameter choice. Next, let us demonstrate that $\bigcup\inflateN{\ElementsB}{\epsilon} \subseteq \Omega \cap \inflateN{B}{\delta}$: Given $T \in \inflateN{\ElementsB}{\epsilon}$, we know that there exists an element $S \in \ElementsB$ such that $\norm[2]{x_T-x_S} \leq \epsilon$. Since $S$ touches $B$, we can use two triangle inequalities to derive the inclusion $T \subseteq \inflateN{B}{\hMax{\ElementsB} + \epsilon + \hMax{\inflateN{\ElementsB}{\epsilon}}}$. From \cite[L.3.15]{Angleitner_H_matrices_FEM}, we know that $\hMax{\inflateN{\ElementsB}{\epsilon}} \leq \hMax{\ElementsB} + \CShape\epsilon$, which implies $\hMax{\ElementsB} + \epsilon + \hMax{\inflateN{\ElementsB}{\epsilon}} \leq 2\CShape(\hMax{\ElementsB}+\epsilon) \leq \delta$ and ultimately $T \subseteq \inflateN{B}{\delta}$. Since $T \in \inflateN{\ElementsB}{\epsilon}$ was arbitrary, we conclude that indeed $\bigcup\inflateN{\ElementsB}{\epsilon} \subseteq \Omega \cap \inflateN{B}{\delta}$. Now, for every $u \in \SHarm{\inflateN{B}{\delta}}$, we know from \cref{Space_SHarm} that there exists a function $\tilde{u} \in \SkpO{1}{p}{\Elements}$ such that $\restrictN{u}{\Omega \cap \inflateN{B}{\delta}} = \restrictN{\tilde{u}}{\Omega \cap \inflateN{B}{\delta}}$. Additionally, for all $v \in \SkpO{1}{p}{\Elements}$ with $\supp{v} \subseteq \inflateN{B}{\delta}$, we know that $\bilinear[a]{u}{v} = 0$. In particular, $\bilinear[a]{\tilde{u}}{v} = \bilinear[a]{u}{v} = 0$ as well, because $v$ restricts the effective integration domain to $\Omega \cap \inflateN{B}{\delta}$, where $u$ and $\tilde{u}$ coincide. In other words, we are allowed to apply \cref{Space_SHarm_Cacc_old} to the function $\tilde{u}$:
\begin{equation*}
\delta \seminorm[\Hk{1}{\ElementsB}]{u} \ceq \epsilon \seminorm[\Hk{1}{\ElementsB}]{\tilde{u}} \cleq p^{\CRed} \norm[\Lp{2}{\inflateN{\ElementsB}{\epsilon}}]{\tilde{u}} \leq p^{\CRed} \norm[\Lp{2}{\Omega \cap \inflateN{B}{\delta}}]{\tilde{u}} = p^{\CRed} \norm[\Lp{2}{\Omega \cap \inflateN{B}{\delta}}]{u}.
\end{equation*}

Second, consider an element $T \in \Elements$ with $\closureN{T} \cap \closureN{B} \neq \emptyset$ and $16\CShape^4\h{T} > \delta$. Using \cref{Subdivide_element}, we can find a uniform mesh $\ElementsS$ such that $\closureN{\bigcup\ElementsS} = \closureN{T}$ and $\hMax{\ElementsS} \leq \delta \cleq \hMin{\ElementsS}$. Now consider the elements $\ElementsS_B := \Set{S \in \ElementsS}{\closureN{S} \cap \closureN{B} \neq \emptyset}$. Exploiting $\closureN{\bigcup\ElementsS} = \closureN{T}$, it is not difficult to show that $T \cap B \subseteq \closureN{\bigcup\ElementsS_B}$. Furthermore, since $\hMax{\ElementsS} \leq \delta$, an elementary geometric argument proves that $\bigcup\ElementsS_B \subseteq T \cap \inflateN{B}{\delta}$. Now, for every $u \in \SHarm{\inflateN{B}{\delta}}$, we know from \cref{Space_SHarm} that there exists a function $\tilde{u} \in \SkpO{1}{p}{\Elements}$ such that $\restrictN{u}{\Omega \cap \inflateN{B}{\delta}} = \restrictN{\tilde{u}}{\Omega \cap \inflateN{B}{\delta}}$. Then, using the well-known (e.g., \cite{Ditzian_Inverse_inequality}) inverse inequality $\h{S}\seminorm[\Hk{1}{S}]{\tilde{u}} \cleq p^2 \norm[\Lp{2}{S}]{\tilde{u}}$, $S \in \ElementsS_B$, we get
\begin{equation*}
\delta^2 \seminorm[\Hk{1}{T \cap B}]{u}^2 \cleq \hMin{\ElementsS}^2 \seminorm[\Hk{1}{\bigcup\ElementsS_B}]{\tilde{u}}^2 \leq \sum_{S \in \ElementsS_B} \h{S}^2 \seminorm[\Hk{1}{S}]{\tilde{u}}^2 \cleq p^4 \sum_{S \in \ElementsS_B} \norm[\Lp{2}{S}]{\tilde{u}}^2 = p^4 \norm[\Lp{2}{\bigcup\ElementsS_B}]{\tilde{u}}^2 \leq p^4 \norm[\Lp{2}{T \cap \inflateN{B}{\delta}}]{u}^2.
\end{equation*}
Note that the implicit constant from the inverse inequality only depends on the shape regularity constant $\CShapeTilde = C(d,\CShape)$ from \cref{Subdivide_element}.

Finally, for every $u \in \SHarm{\inflateN{B}{\delta}}$, we put the estimates for both groups of elements together:
\begin{equation*}
\delta^2 \seminorm[\Hk{1}{\Omega \cap B}]{u}^2 = \delta^2 \sum_{\substack{T \in \Elements: \\ \closureN{T} \cap \closureN{B} \neq \emptyset}} \seminorm[\Hk{1}{T \cap B}]{u}^2 \leq \delta^2 \seminorm[\Hk{1}{\ElementsB}]{u}^2 + \sum_{\substack{T \in \Elements: \\ \closureN{T} \cap \closureN{B} \neq \emptyset, \\ 16\CShape^4\h{T} > \delta}} \delta^2 \seminorm[\Hk{1}{T \cap B}]{u}^2 \cleq (p^{2\CRed} + p^4) \norm[\Lp{2}{\Omega \cap \inflateN{B}{\delta}}]{u}^2.
\end{equation*}
Noting $\CRed \geq 2$, this finishes the proof.

\end{proof}

\subsection{The low-rank approximation operator} \label{SSec:Low_rank_approx_op}

In our previous construction of the single-step coarsening operator $\fDef{\CoarseningOp{\ElementsB}{\delta}}{\SHarm{\inflateN{\ElementsB}{\delta}}}{\SHarm{\ElementsB}}$, \cite[T.3.31]{Angleitner_H_matrices_FEM}, we used the orthogonal projection $\fDef{\Pi_{\ElementsS}^p}{\Lp{2}{\Omega}}{\Skp{0}{p}{\ElementsS}}$ on a uniform mesh $\ElementsS$ to reduce the overall rank. The output was subsequently fed into the orthogonal projection $\fDef{P_{\ElementsB}}{\Lp{2}{\Omega}}{\SHarm{\ElementsB}}$ in order to generate an element of $\SHarm{\ElementsB}$ again. The existence of $P_{\ElementsB}$ hinged on the fact that $\SHarm{\ElementsB}$ was finite-dimensional and thus a closed subspace of $\Lp{2}{\Omega}$. However, according to \cref{Space_SHarm_Props}, the updated spaces $\SHarm{B}$ from \cref{Space_SHarm} are closed subspaces of $\Hk{1}{\Omega}$, rather than $\Lp{2}{\Omega}$. Therefore, we now have to use the orthogonal projection $\fDef{P_B}{\Hk{1}{\Omega}}{\SHarm{B}}$, and a replacement $\fDef{\Pi_H}{\Hk{1}{\Omega}}{\Hk{1}{\Omega}}$ for the orthogonal projection $\fDef{\Pi_{\ElementsS}^p}{\Lp{2}{\Omega}}{\Skp{0}{p}{\ElementsS}}$ is needed.

\begin{lemma} \label{Low_rank_approx_op_2}
Let $H>0$ be a free parameter. Then, there exists a \emph{low-rank approximation operator}
\begin{equation*}
\fDef{\Pi_H}{\Hk{1}{\Omega}}{\Hk{1}{\Omega}}
\end{equation*}
with the following properties:
\begin{enumerate}
\item \emph{Local rank:} For all boxes $B \subseteq \R^d$, there holds the dimension bound
\begin{equation*}
\dimN{\Set{\Pi_H v}{v \in \Hk{1}{\Omega} \,\, \text{with} \,\, \supp{v} \subseteq B}} \cleq (1+\diam[2]{B}/H)^d.
\end{equation*}

\item \emph{Error bound:} For all $v \in \Hk{1}{\Omega}$, there holds the global error bound
\begin{equation*}
\sum_{l=0}^{1} H^l \seminorm[\Hk{l}{\Omega}]{v-\Pi_H v} \cleq H \seminorm[\Hk{1}{\Omega}]{v}.
\end{equation*}
\end{enumerate}
\end{lemma}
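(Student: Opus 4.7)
The plan is to realise $\Pi_H$ as a Scott--Zhang type quasi-interpolation operator onto the space $V_H$ of continuous piecewise linears on an auxiliary shape-regular, quasi-uniform simplicial mesh $\mathcal{S}_H$ with $\h{S} \ceq H$ for all $S \in \mathcal{S}_H$, covering a hypercube $Q \supseteq \overline{\Omega}$. Concretely, I take $\mathcal{S}_H$ to be a Kuhn triangulation of $Q$ scaled to mesh size $H$; this auxiliary mesh is independent of the original FEM mesh $\Elements$. For every vertex $z$ of $\mathcal{S}_H$ with $z \in \overline{\Omega}$ I select a simplex $\sigma_z \in \mathcal{S}_H$ with $z \in \overline{\sigma_z}$ and $\sigma_z \subseteq \overline{\Omega}$, together with the $L^2(\sigma_z)$-dual basis function $\psi_z$ of the nodal hat function $\phi_z$ restricted to $\sigma_z$. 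Setting
\begin{equation*}
\Pi_H v := \sum_z \left(\int_{\sigma_z} v \, \psi_z \, \mathrm{d}x\right) \phi_z|_\Omega, \qquad v \in \Hk{1}{\Omega},
\end{equation*}
yields a continuous piecewise linear function on $\mathcal{S}_H$ whose restriction to $\Omega$ lies in $\Hk{1}{\Omega}$, and the map $\Pi_H$ is clearly linear.

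For the local rank bound in item (1), I observe that whenever $\supp{v} \subseteq B$, the coefficient of $\phi_z$ in $\Pi_H v$ vanishes unless $\sigma_z \cap B \neq \emptyset$, which forces $\dist[2]{z}{B} \cleq H$. By quasi-uniformity of $\mathcal{S}_H$, the number of such vertices is bounded by the number of $\mathcal{S}_H$-cells inside an $H$-neighbourhood of $B$, which is $\cleq (1 + \diam[2]{B}/H)^d$. Since $\Pi_H v$ lies in the span of the corresponding hat functions $\phi_z|_\Omega$, the asserted dimension estimate follows.

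For the error bound in item (2), I invoke the classical local approximation estimate for Scott--Zhang interpolation. Invariance of $\Pi_H$ on piecewise linears combined with a Bramble--Hilbert argument on the reference simplex gives, for every $T \in \mathcal{S}_H$ with $T \cap \Omega \neq \emptyset$,
\begin{equation*}
\sum_{l=0}^{1} H^l \seminorm[\Hk{l}{T \cap \Omega}]{v - \Pi_H v} \cleq H \, \seminorm[\Hk{1}{\omega_T \cap \Omega}]{v},
\end{equation*}
where $\omega_T$ denotes the patch of all $\mathcal{S}_H$-simplices sharing a vertex with $T$. Squaring, summing over $T$, and exploiting the bounded overlap of the patches $\omega_T$ (uniform by shape-regularity of $\mathcal{S}_H$) yields the asserted global bound.

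The main obstacle is the construction of the operator near $\partial \Omega$: at every boundary vertex $z$ one must exhibit a simplex $\sigma_z \subseteq \overline{\Omega}$ with $z \in \overline{\sigma_z}$ of uniformly bounded aspect ratio as $H \to 0$. This is the classical Scott--Zhang selection, which relies on the interior cone condition available for the Lipschitz polyhedral domain $\Omega$ and is carried out at scale $H$. All implicit constants depend only on $d$, on $\Omega$ and on the fixed shape-regularity of $\mathcal{S}_H$, and are in particular independent of $p$, of $H$, and of the original FEM mesh $\Elements$.
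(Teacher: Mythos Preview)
Your approach coincides with the paper's: both realise $\Pi_H$ as a quasi-interpolation onto continuous piecewise linears on an auxiliary quasi-uniform simplicial mesh of size $\sim H$, derive the rank bound by counting vertices/elements near $B$, and quote standard approximation estimates for the error bound. The paper uses the Cl\'ement operator (nodal values given by patch averages) on a mesh of $\Omega$ obtained by successive refinements of a fixed initial triangulation, whereas you opt for Scott--Zhang on a Kuhn triangulation of a bounding cube; for the present purpose these variants are interchangeable.

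One technical wrinkle in your version deserves attention: since $\mathcal{S}_H$ does not conform to $\boundaryN{\Omega}$, a vertex $z \in \overline{\Omega}$ near the boundary may have \emph{no} adjacent simplex of $\mathcal{S}_H$ contained in $\overline{\Omega}$, contradicting your selection ``$\sigma_z \in \mathcal{S}_H$ with $\sigma_z \subseteq \overline{\Omega}$''; and on cut elements some vertices lie outside $\overline{\Omega}$ and receive zero coefficient, so $\Pi_H$ fails to reproduce affines there, which undermines the Bramble--Hilbert step on those elements. The paper sidesteps all of this by meshing $\Omega$ itself. Your interior-cone remark points toward the right repair, but carrying it out means either dropping the requirement $\sigma_z \in \mathcal{S}_H$ in favour of ad hoc simplices (with extra bookkeeping) or, more simply, taking the paper's route and building $\mathcal{S}_H$ as a triangulation of $\Omega$.
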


\begin{proof}
Using successive refinements of an arbitrary initial mesh, we can construct a uniform mesh $\ElementsS \subseteq \Pow{\Omega}$ with $\hMax{\ElementsS} \leq H \cleq \hMin{\ElementsS}$. Denote by $\Nodes$ the set of nodes and by $\Set{g_N}{N \in \Nodes} \subseteq \Skp{1}{1}{\ElementsS}$ the corresponding basis of hat functions. We choose the classical \emph{Cl\'ement operator} $\fDef{\Pi_H}{\Lp{2}{\Omega}}{\Skp{1}{1}{\ElementsS}}$ from \cite{clement75}, which maps any given input $v \in \Lp{2}{\Omega}$ to the linear combination $\Pi_H v := \sum_{N \in \Nodes} v_N g_N$, where $v_N \in \R$ is the mean value of $v$ over the support of $g_N$. While the error bound is common knowledge (e.g., \cite[T.1]{clement75}), the dimension bound amounts to counting the number of mesh elements lying inside the slightly inflated box $\inflateN{B}{H}$:
\begin{equation*}
\cardN{\Set{S \in \ElementsS}{S \subseteq \inflateN{B}{H}}} \leq H^{-d} \sum_{S \subseteq \inflateN{B}{H}} \h{S}^d \cleq H^{-d} \sum_{S \subseteq \inflateN{B}{H}} \meas{S} \leq H^{-d} \meas{\inflateN{B}{H}} \cleq (1+\diam[2]{B}/H)^d.
\end{equation*}
\end{proof}

\subsection{The coarsening operators}

At this point, we present an updated construction of the single-step coarsening operator from \cite[T.3.31]{Angleitner_H_matrices_FEM}. The proof is less obfuscated than before, because the tedious case analysis for the parameter $\delta$ has become obsolete.

\begin{theorem} \label{Coarse_op_single}
Let $B \subseteq \R^d$ be a box and $\delta>0$ be a free parameter with $\delta \cleq 1$. Denote by $\CRed \geq 2$ the constant from \cref{Melenk_Rojik}. Then, there exists a linear \emph{single-step coarsening operator}
\begin{equation*}
\fDef{\CoarseningOp{B}{\delta}}{\SHarm{\inflateN{B}{\delta}}}{\SHarm{B}}
\end{equation*}
with the following properties:
\begin{enumerate}
\item \emph{Rank bound:} The rank is bounded by
\begin{equation*}
\rank{\CoarseningOp{B}{\delta}} \cleq p^{d\CRed} (1+\diam[2]{B}/\delta)^d.
\end{equation*}

\item \emph{Approximation error:} For all $u \in \SHarm{\inflateN{B}{\delta}}$, there holds the error bound
\begin{equation*}
\norm[\Lp{2}{\Omega \cap B}]{u - \CoarseningOp{B}{\delta} u} \leq \frac{1}{2} \norm[\Lp{2}{\Omega \cap \inflateN{B}{\delta}}]{u}.
\end{equation*}
\end{enumerate}
\end{theorem}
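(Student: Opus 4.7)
The plan is to generalize the two-step construction from \cite[T.3.31]{Angleitner_H_matrices_FEM}: first reduce the rank via a low-dimensional projection, then return to $\SHarm{B}$ via the orthogonal projection $P_B$. Set $H:=c\,\delta\,p^{-\CRed}$, where $c>0$ is a small absolute constant to be fixed at the end, and put
\begin{equation*}
\CoarseningOp{B}{\delta} u \,:=\, P_B\,\Pi_H\,\CutoffOp{B}{\delta/2} u,
\end{equation*}
with $\CutoffOp{B}{\delta/2}$ as in \cref{Cut_off_op}, $\Pi_H$ the low-rank operator from \cref{Low_rank_approx_op_2}, and $P_B:\Hk{1}{\Omega}\to\SHarm{B}$ the $\Hk{1}(\Omega)$-orthogonal projection, which is well-defined by \cref{Space_SHarm_Props}(1). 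The rank bound is then immediate: $\CutoffOp{B}{\delta/2}u$ is supported in $\Omega\cap\inflateN{B}{\delta/2}$ by \cref{Cut_off_op_Props}, so item~(1) of \cref{Low_rank_approx_op_2} caps the image dimension of $\Pi_H\CutoffOp{B}{\delta/2}$ by $\cleq(1+\diam[2]{B}/H)^d\cleq p^{d\CRed}(1+\diam[2]{B}/\delta)^d$ for our choice of $H$, and post-composition with $P_B$ cannot enlarge it.

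For the approximation estimate, I would decompose $u=\Phi+\psi$ with $\Phi:=\CutoffOp{B}{\delta/2} u$ and $\psi:=u-\Phi$. Since $\psi$ vanishes on $\Omega\cap B$, both conditions in \cref{Space_SHarm} hold trivially for it, so $\psi\in\SHarm{B}$ and $P_B\psi=\psi$. Combined with $u\in\SHarm{\inflateN{B}{\delta}}\subseteq\SHarm{B}$ from \cref{Space_SHarm_Props}(2), which gives $P_Bu=u$, this produces
\begin{equation*}
u-\CoarseningOp{B}{\delta}u \,=\, P_B(u-\Pi_H\Phi) \,=\, \psi+P_B(\Phi-\Pi_H\Phi),
\end{equation*}
so on $\Omega\cap B$ the error reduces to $P_B(\Phi-\Pi_H\Phi)$. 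I then chain three quantitative ingredients. Applying the Caccioppoli inequality \cref{Space_SHarm_Cacc} with inner box $\inflateN{B}{\delta/2}$ and inflation $\delta/2$ (so the hypothesis becomes $u\in\SHarm{\inflateN{B}{\delta}}$) gives $\seminorm[\Hk{1}{\Omega\cap\inflateN{B}{\delta/2}}]{u}\cleq (p^{\CRed}/\delta)\norm[\Lp{2}{\Omega\cap\inflateN{B}{\delta}}]{u}$; the stability estimate of \cref{Cut_off_op_Props} then propagates this to $\seminorm[\Hk{1}{\Omega}]{\Phi}\cleq (p^{\CRed}/\delta)\,\norm[\Lp{2}{\Omega\cap\inflateN{B}{\delta}}]{u}$. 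Item~(2) of \cref{Low_rank_approx_op_2} now yields the $\Lp{2}$-rate $\norm[\Lp{2}{\Omega}]{\Phi-\Pi_H\Phi}\cleq H\seminorm[\Hk{1}{\Omega}]{\Phi}\cleq c\,\norm[\Lp{2}{\Omega\cap\inflateN{B}{\delta}}]{u}$, which is linear in $c$ thanks to the choice $H\ceq\delta/p^{\CRed}$. A final $\Lp{2}$-control of $P_B$ on $\Omega\cap B$ would convert this into $\norm[\Lp{2}{\Omega\cap B}]{u-\CoarseningOp{B}{\delta}u}\cleq c\,\norm[\Lp{2}{\Omega\cap\inflateN{B}{\delta}}]{u}$, after which fixing $c$ sufficiently small absorbs the implicit constant into the required factor $\tfrac{1}{2}$.

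The last step is the genuine difficulty of the proof. Unlike the $\Lp{2}$-projection used in \cite[T.3.31]{Angleitner_H_matrices_FEM}, the operator $P_B$ is only $\Hk{1}$-orthogonal, and a naive Poincar\'e bound picks up $\seminorm[\Hk{1}{\Omega}]{\Phi-\Pi_H\Phi}\cleq \seminorm[\Hk{1}{\Omega}]{\Phi}$, which carries no small factor in $H$ and destroys the linear dependence on $c$. The route I envisage exploits the inclusion $\set{w\in\HkO{1}{\Omega}:\restrictN{w}{\Omega\cap B}\equiv 0}\subseteq\SHarm{B}$: this forces $(\identity-P_B)(\Phi-\Pi_H\Phi)$ to be $\Hk{1}$-orthogonal to every test function supported in $\Omega\setminus B$, localizing the relevant part of $P_B(\Phi-\Pi_H\Phi)$ to the finite-dimensional trace space $\set{\restrictN{\tilde w}{\Omega\cap B}:\tilde w\in\SkpO{1}{p}{\Elements}}$ that already appeared in the closedness proof of \cref{Space_SHarm_Props}. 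On this finite-dimensional object, $\Hk{1}$- and $\Lp{2}$-norms are equivalent, so a $p$-explicit stability estimate should be available through an inverse inequality; turning this argument into a clean bound with a constant that can be absorbed by the freely chosen $c$ is where I expect the bulk of the technical care to be needed.
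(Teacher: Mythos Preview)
Your construction $\CoarseningOp{B}{\delta}=P_B\Pi_H\CutoffOp{B}{\delta/2}$ and the rank bound are exactly what the paper does, and your decomposition $u=\Phi+\psi$ is equivalent to the paper's use of \cref{Space_SHarm_Props}(4) (which says directly that $\Phi=\CutoffOp{B}{\delta/2}u\in\SHarm{B}$). So you correctly reduce the error on $\Omega\cap B$ to $P_B(\Phi-\Pi_H\Phi)$.

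The gap is precisely where you locate it, and your proposed fix will not close it. The issue is that an $H^1$-orthogonal projection need not be $L^2$-bounded with a constant independent of the mesh and of $p$; your finite-dimensional/inverse-inequality route would introduce factors of the type $p^2/\hMin{}$ coming from the equivalence of $\Hk{1}$- and $\Lp{2}$-norms on $\Set{\restrictN{\tilde w}{\Omega\cap B}}{\tilde w\in\SkpO{1}{p}{\Elements}}$, and these cannot be absorbed by the free constant $c$.

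The paper's resolution is a one-line change in the definition of $P_B$: take $P_B$ to be the orthogonal projection onto $\SHarm{B}$ with respect to the \emph{$H$-weighted} inner product inducing the norm $\sum_{l=0}^{1}H^l\seminorm[\Hk{l}{\Omega}]{\cdot}$. Then $P_B$ is automatically a contraction in this norm, and the Cl\'ement estimate from \cref{Low_rank_approx_op_2}(2) reads $\sum_{l=0}^{1}H^l\seminorm[\Hk{l}{\Omega}]{(\identity-\Pi_H)\Phi}\cleq H\seminorm[\Hk{1}{\Omega}]{\Phi}$. Chaining these and simply dropping to the $l=0$ term gives
\begin{equation*}
\norm[\Lp{2}{\Omega\cap B}]{P_B(\identity-\Pi_H)\Phi}\;\leq\;\sum_{l=0}^{1}H^l\seminorm[\Hk{l}{\Omega}]{P_B(\identity-\Pi_H)\Phi}\;\leq\;\sum_{l=0}^{1}H^l\seminorm[\Hk{l}{\Omega}]{(\identity-\Pi_H)\Phi}\;\cleq\;H\seminorm[\Hk{1}{\Omega}]{\Phi},
\end{equation*}
after which your \cref{Cut_off_op_Props}/\cref{Space_SHarm_Cacc} chain and the choice $H=c\,\delta p^{-\CRed}$ finish the proof exactly as you outlined. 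In short: the weighted norm makes the projection ``see'' the $L^2$-part with unit weight, so the $O(H)$ gain from $\Pi_H$ survives passage through $P_B$ without any appeal to inverse inequalities.
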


\begin{proof}
Denote by $\fDef{\CutoffOp{B}{\delta/2}}{\Hk{1}{\Omega}}{\Hk{1}{\Omega}}$ the cut-off operator from \cref{Cut_off_op}. Next, let $H>0$ and denote by $\fDef{\Pi_H}{\Hk{1}{\Omega}}{\Hk{1}{\Omega}}$ the low-rank approximation operator from \cref{Low_rank_approx_op_2}. Furthermore, since $\SHarm{B} \subseteq \Hk{1}{\Omega}$ is a closed subspace (cf. \cref{Space_SHarm_Props}), we may introduce the orthogonal projection $\fDef{P_B}{\Hk{1}{\Omega}}{\SHarm{B}}$ with respect to the equivalent norm $\sum_{l=0}^{1} H^l \seminorm[\Hk{l}{\Omega}]{\cdot}$. We then define the combined operator
\begin{equation*}
\fDef{\CoarseningOp{B}{\delta} := P_B \Pi_H \CutoffOp{B}{\delta/2}}{\SHarm{\inflateN{B}{\delta}}}{\SHarm{B}}.
\end{equation*}

First, we establish the error bound: Let $u \in \SHarm{\inflateN{B}{\delta}}$. From \cref{Space_SHarm_Props} we know that $u \in \SHarm{B}$ and that $\CutoffOp{B}{\delta/2} u \in \SHarm{B}$. Since $P_B$ is a projection onto $\SHarm{B}$, it follows that $P_B \CutoffOp{B}{\delta/2} u = \CutoffOp{B}{\delta/2} u$. Then, using \cref{Cut_off_op_Props}, we get the identity $\restrictN{u}{\Omega \cap B} = \restrict{\CutoffOp{B}{\delta/2} u}{\Omega \cap B} = \restrict{P_B \CutoffOp{B}{\delta/2} u}{\Omega \cap B}$. We compute
\begin{eqnarray*}
\sum_{l=0}^{1} H^l \seminorm[\Hk{l}{\Omega \cap B}]{u-\CoarseningOp{B}{\delta} u} = \sum_{l=0}^{1} H^l \seminorm[\Hk{l}{\Omega \cap B}]{P_B \CutoffOp{B}{\delta/2} u - P_B \Pi_H \CutoffOp{B}{\delta/2} u} = \sum_{l=0}^{1} H^l \seminorm[\Hk{l}{\Omega}]{P_B(\identity-\Pi_H)(\CutoffOp{B}{\delta/2} u)} \\
\leq  \sum_{l=0}^{1} H^l \seminorm[\Hk{l}{\Omega}]{(\identity-\Pi_H)(\CutoffOp{B}{\delta/2} u)} \stackrel{\cref{Low_rank_approx_op_2}}{\cleq} H \seminorm[\Hk{1}{\Omega}]{\CutoffOp{B}{\delta/2} u} \stackrel{\cref{Cut_off_op_Props}}{\cleq} (H/\delta) \sum_{l=0}^{1} \delta^l \seminorm[\Hk{l}{\Omega \cap \inflateN{B}{\delta/2}}]{u} \stackrel{\cref{Space_SHarm_Cacc}}{\cleq} p^{\CRed} (H/\delta) \norm[\Lp{2}{\Omega \cap \inflateN{B}{\delta}}]{u}.
\end{eqnarray*}
Now, denote the implicit cumulative constant by $C>0$. Then, by choosing $H := \delta/(2Cp^{\CRed})>0$, we get the desired factor $1/2$.

Finally, the rank bound can be seen as follows:
\begin{equation*}
\begin{array}{rclcl}
\rank{\CoarseningOp{B}{\delta}} &=& \dimN{\Set{P_B \Pi_H \CutoffOp{B}{\delta/2} u}{u \in \SHarm{\inflateN{B}{\delta}}}} &\stackrel{\cref{Cut_off_op_Props}}{\leq}& \dimN{\Set{\Pi_H v}{v \in \Hk{1}{\Omega} \,\, \text{with} \,\, \supp{v} \subseteq \inflateN{B}{\delta/2}}} \\
&\stackrel{\cref{Low_rank_approx_op_2}}{\cleq}& (1+\diam[2]{\inflateN{B}{\delta/2}}/H)^d &\stackrel{\delta \ceq Hp^{\CRed}}{\cleq}& p^{d\CRed} (1+\diam[2]{B}/\delta)^d.
\end{array}
\end{equation*}
This concludes the proof.

\end{proof}

Now that the new version of the \emph{single}-step coarsening operator $\CoarseningOp{B}{\delta}$ is established, the \emph{multi}-step coarsening operator $\CoarseningOp{B}{\delta,L}$ can be constructed as before:

\begin{theorem} \label{Coarse_op_multi}
Let $B \subseteq \R^d$ be a box and $\delta>0$ be a free parameter with $\delta \cleq 1$. Furthermore, let $L \in \N$. Denote by $\CRed \geq 2$ the constant from \cref{Melenk_Rojik}. Then, there exists a linear \emph{multi-step coarsening operator}
\begin{equation*}
\fDef{\CoarseningOp{B}{\delta,L}}{\SHarm{\inflateN{B}{\delta L}}}{\SHarm{B}}
\end{equation*}
with the following properties:
\begin{enumerate}
\item \emph{Rank bound:} The rank is bounded by
\begin{equation*}
\rank{\CoarseningOp{B}{\delta,L}} \cleq p^{d\CRed} (L+\diam[2]{B}/\delta)^{d+1}.
\end{equation*}

\item \emph{Approximation error:} For all $u \in \SHarm{\inflateN{B}{\delta L}}$, there holds the error bound
\begin{equation*}
\norm[\Lp{2}{\Omega \cap B}]{u - \CoarseningOp{B}{\delta,L} u} \leq 2^{-L} \norm[\Lp{2}{\Omega \cap \inflateN{B}{\delta L}}]{u}.
\end{equation*}
\end{enumerate}
\end{theorem}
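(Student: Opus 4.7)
The plan is to iterate the single-step operator from \cref{Coarse_op_single} on a nested sequence of boxes, but organized as a ``product of errors'' rather than as a plain composition of the $Q$'s. Concretely, I set $B_\ell := \inflateN{B}{\delta\ell}$ for $\ell = 0, 1, \ldots, L$ so that $\inflate{B_\ell}{\delta} = B_{\ell+1}$, and let $Q_\ell := \CoarseningOp{B_\ell}{\delta}$, which by \cref{Coarse_op_single} maps $\SHarm{B_{\ell+1}} \to \SHarm{B_\ell}$. I then define
\begin{equation*}
\CoarseningOp{B}{\delta,L} \;:=\; \identity - (\identity - Q_0)(\identity - Q_1)\cdots(\identity - Q_{L-1}).
\end{equation*}
Writing $w_L := u$ and $w_\ell := (\identity - Q_\ell)w_{\ell+1}$, a downward induction using the inclusions $\SHarm{B_{\ell+1}} \subseteq \SHarm{B_\ell}$ and the fact that each $\SHarm{B_\ell}$ is a linear subspace (both from \cref{Space_SHarm_Props}) shows that $w_\ell \in \SHarm{B_\ell}$. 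In particular $w_0 \in \SHarm{B}$, and hence $\CoarseningOp{B}{\delta,L}u = u - w_0 \in \SHarm{B}$ as required.

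The error bound then follows by iterating the single-step contraction factor $\tfrac{1}{2}$: applying \cref{Coarse_op_single} to $w_{\ell+1} \in \SHarm{B_{\ell+1}} = \SHarm{\inflate{B_\ell}{\delta}}$ yields
\begin{equation*}
\norm[\Lp{2}{\Omega \cap B_\ell}]{w_\ell} = \norm[\Lp{2}{\Omega \cap B_\ell}]{w_{\ell+1} - Q_\ell w_{\ell+1}} \leq \tfrac{1}{2}\norm[\Lp{2}{\Omega \cap B_{\ell+1}}]{w_{\ell+1}},
\end{equation*}
and iterating $L$ times gives the desired $\norm[\Lp{2}{\Omega \cap B}]{u - \CoarseningOp{B}{\delta,L} u} = \norm[\Lp{2}{\Omega \cap B}]{w_0} \leq 2^{-L}\norm[\Lp{2}{\Omega \cap B_L}]{u}$.

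For the rank bound I would unroll the telescoping. Using $u - w_0 = (u - w_1) + Q_0 w_1$ and iterating yields the explicit sum representation
\begin{equation*}
\CoarseningOp{B}{\delta,L}u = u - w_0 = \sum_{\ell=0}^{L-1} Q_\ell w_{\ell+1},
\end{equation*}
so that $\ran{\CoarseningOp{B}{\delta,L}} \subseteq \sum_{\ell=0}^{L-1} \ran{Q_\ell}$. The elementary estimate $\diam[2]{B_\ell} \leq \diam[2]{B} + 2\sqrt{d}\,\delta\ell$ together with the single-step rank bound give $\rank{Q_\ell} \cleq p^{d\CRed}(L + \diam[2]{B}/\delta)^d$ uniformly in $\ell$, hence
\begin{equation*}
\rank{\CoarseningOp{B}{\delta,L}} \leq \sum_{\ell=0}^{L-1}\rank{Q_\ell} \cleq L \cdot p^{d\CRed}\bigl(L + \diam[2]{B}/\delta\bigr)^d \leq p^{d\CRed}\bigl(L + \diam[2]{B}/\delta\bigr)^{d+1}.
\end{equation*}

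The subtlety, and the reason for the ``product of errors'' formulation rather than a plain composition, is this: a naive $L$-fold composition $Q_0 Q_1 \cdots Q_{L-1}$ does \emph{not} work, because the $L^2$-stability of each $Q_\ell$ is only of order $3/2$ on its natural domain (as one sees by combining the single-step error estimate with the triangle inequality), so the telescoping $u - Q_0 \cdots Q_{L-1}u = \sum_\ell (v_{\ell+1} - Q_\ell v_{\ell+1})$ accumulates growing factors $\norm{v_{\ell+1}} \cleq (3/2)^{L-\ell-1}\norm{u}$ and yields $(3/2)^L$ rather than $2^{-L}$. In the formulation $\identity - \prod(\identity - Q_\ell)$, by contrast, every factor $(\identity - Q_\ell)$ is a genuine contraction by the exact factor $1/2$ from the $L^2$-norm on $\SHarm{B_{\ell+1}}\!\!\mid_{B_{\ell+1}}$ to the $L^2$-norm on $B_\ell$, and these contractions multiply cleanly along the chain of nested boxes.
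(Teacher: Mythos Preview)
Your proof is correct and matches the construction the paper has in mind: the paper merely sketches ``combine the single-step operators $\CoarseningOp{B_l}{\delta}$ on the concentric boxes $B_l := \inflateN{B}{\delta l}$'' and defers the details to \cite[T.3.32]{Angleitner_H_matrices_FEM}, which is precisely the telescoping/product-of-errors argument you spell out. Your verification that each $w_\ell \in \SHarm{B_\ell}$ via \cref{Space_SHarm_Props}, the iteration of the single-step contraction, and the rank bound via $\ran{\CoarseningOp{B}{\delta,L}} \subseteq \sum_\ell \ran{Q_\ell}$ are all sound.
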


\begin{proof}
The basic idea is to combine the single-step coarsening operators $\fDef{\CoarseningOp{B_l}{\delta}}{\SHarm{B_{l+1}}}{\SHarm{B_l}}$ that are associated with the concentric boxes $B_l := \inflateN{B}{\delta l}$, $l \in \set{0,\dots,L}$. The details of the construction can be found in \cite[T.3.32]{Angleitner_H_matrices_FEM}.
\end{proof}

\subsection{Putting everything together}

In this section, we mimick \cite[Section 3.9]{Angleitner_H_matrices_FEM}, and may finally prove our main result, \cref{Main_result}. Recall from \cref{SSec_Reduction_mat_lvl_fct_lvl} that we need to approximate the admissible blocks $\restrictN{\mvemph{A}^{-1}}{I \times J}$ by low-rank matrices in order to get an $\mathcal{H}$-matrix approximation to the full matrix $\mvemph{A}^{-1}$. Then, \cref{Mat_lvl_to_fct_lvl} translated the problem into the realm of function spaces, implying that a suitable subspace $V \subseteq \Lp{2}{\Omega}$ needs to be constructed. We already know from our previous work, \cite[T.3.33]{Angleitner_H_matrices_FEM}, that the range of the multi-step coarsening operator $\CoarseningOp{B}{\delta,L}$ does the trick:

\begin{theorem} \label{Space_VBDL}
Let $B,D \subseteq \R^d$ be two boxes with $0 < \diam[2]{B} \leq \CAdm \dist[2]{B}{D}$. Furthermore, let $L \in \N$. Denote by $\CRed \geq 2$ the constant from \cref{Melenk_Rojik}. Then, there exists a subspace
\begin{equation*}
V_{B,D,L} \subseteq \Lp{2}{\Omega}
\end{equation*}
with the following properties:
\begin{enumerate}
\item \emph{Dimension bound:} There holds the dimension bound
\begin{equation*}
\dimN{V_{B,D,L}} \cleq p^{d\CRed} L^{d+1}.
\end{equation*}

\item \emph{Approximation property:} For every $f \in \Lp{2}{\Omega}$ with $\supp{f} \subseteq D$, there holds the error bound
\begin{equation*}
\inf_{v \in V_{B,D,L}} \norm[\Lp{2}{\Omega \cap B}]{S_{\Elements} f - v} \cleq 2^{-L} \norm[\Lp{2}{\Omega}]{f}.
\end{equation*}
\end{enumerate}
\end{theorem}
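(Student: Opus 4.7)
The plan is to set $V_{B,D,L} := \ran{\CoarseningOp{B}{\delta,L}} \subseteq \SHarm{B} \subseteq \Lp{2}{\Omega}$, where $\CoarseningOp{B}{\delta,L}$ is the multi-step coarsening operator of \cref{Coarse_op_multi} and $\delta>0$ is tuned so that (i) the outer inflated box $\inflateN{B}{\delta L}$ still avoids $D$, and (ii) the ratio $\diam[2]{B}/\delta$ is no larger than a constant multiple of $L$, so that the rank bound collapses to $L^{d+1}$.

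First, I would choose $\delta := \dist[2]{B}{D}/(2L\sqrt{d})$. Since $B$ and $D$ are axes-parallel and the $\ell^\infty$-distance between $B$ and $\R^d \setminus \inflateN{B}{\delta L}$ equals $\delta L$, every point of $\inflateN{B}{\delta L}$ lies within Euclidean distance $\delta L\sqrt{d} = \dist[2]{B}{D}/2$ of $B$; in particular, $\inflateN{B}{\delta L} \cap D = \emptyset$. Because $\dist[2]{B}{D} \leq \diam[2]{\Omega} \cleq 1$, the hypothesis $\delta \cleq 1$ of \cref{Coarse_op_multi} is automatically satisfied, and the admissibility $\diam[2]{B} \leq \CAdm \dist[2]{B}{D}$ yields
\begin{equation*}
\diam[2]{B}/\delta = 2L\sqrt{d} \cdot \diam[2]{B}/\dist[2]{B}{D} \cleq L.
\end{equation*}

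Second, I would read off the dimension bound directly from \cref{Coarse_op_multi}:
\begin{equation*}
\dimN{V_{B,D,L}} \leq \rank{\CoarseningOp{B}{\delta,L}} \cleq p^{d\CRed}(L+\diam[2]{B}/\delta)^{d+1} \cleq p^{d\CRed} L^{d+1}.
\end{equation*}
For the approximation estimate, given any $f \in \Lp{2}{\Omega}$ with $\supp{f} \subseteq D$, the disjointness $\inflateN{B}{\delta L} \cap D = \emptyset$ together with item $(3)$ of \cref{Space_SHarm_Props} yields $\SolutionOp{\Elements} f \in \SHarm{\inflateN{B}{\delta L}}$. Thus $w := \CoarseningOp{B}{\delta,L}(\SolutionOp{\Elements} f) \in V_{B,D,L}$ is a legitimate approximant, and the error bound of \cref{Coarse_op_multi} combined with the a priori estimate $\norm[\Hk{1}{\Omega}]{\SolutionOp{\Elements} f} \cleq \norm[\Lp{2}{\Omega}]{f}$ for the discrete solution operator (see the discussion following \cref{Sol_op_disc}) gives
\begin{equation*}
\inf_{v \in V_{B,D,L}} \norm[\Lp{2}{\Omega \cap B}]{\SolutionOp{\Elements} f - v} \leq 2^{-L} \norm[\Lp{2}{\Omega \cap \inflateN{B}{\delta L}}]{\SolutionOp{\Elements} f} \cleq 2^{-L}\norm[\Lp{2}{\Omega}]{f}.
\end{equation*}

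The one genuine point to watch is the geometric tuning of $\delta$: one must verify simultaneously that the outer inflated box stays clear of $\supp{f}$ (so that harmonicity on $\inflateN{B}{\delta L}$ is available), that admissibility absorbs $\diam[2]{B}/\delta$ into a factor of $L$ (so that the rank bound reduces to $L^{d+1}$), and that $\delta \cleq 1$ holds (so that \cref{Coarse_op_multi} applies). All three requirements are compatible thanks to the admissibility hypothesis and the boundedness of $\Omega$, and no further technical work beyond this single parameter choice is needed.
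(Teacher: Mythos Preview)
Your proposal is correct and follows essentially the same approach as the paper: define $V_{B,D,L} := \ran{\CoarseningOp{B}{\delta,L}}$, tune $\delta$ so that $\inflateN{B}{\delta L}$ avoids $D$ while $\diam[2]{B}/\delta \cleq L$, invoke \cref{Space_SHarm_Props} to place $\SolutionOp{\Elements} f$ in $\SHarm{\inflateN{B}{\delta L}}$, and read off both bounds from \cref{Coarse_op_multi}. The only cosmetic difference is the specific choice of $\delta$ --- the paper uses $\delta = \diam[2]{B}/(2\sqrt{d}\CAdm L)$ whereas you use $\delta = \dist[2]{B}{D}/(2L\sqrt{d})$ --- but admissibility makes the two choices comparable, and the remaining arguments are identical.
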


\begin{proof}
Let $B,D \subseteq \R^d$ and $L \in \N$ as above. Set $\delta := \diam[2]{B}/(2\sqrt{d}\CAdm L) > 0$ and denote by $\fDef{\CoarseningOp{B}{\delta,L}}{\SHarm{\inflateN{B}{\delta L}}}{\SHarm{B}}$ the multi-step coarsening operator from \cref{Coarse_op_multi}. We choose the space
\begin{equation*}
V_{B,D,L} := \ran{\CoarseningOp{B}{\delta,L}} \subseteq \SHarm{B} \subseteq \Lp{2}{\Omega}.
\end{equation*}

Using \cref{Coarse_op_multi} and the definition of $\delta$, we can bound the dimension as follows:
\begin{equation*}
\dimN{V_{B,D,L}} = \rank{\CoarseningOp{B}{\delta,L}} \cleq p^{d\CRed} (L+\diam[2]{B}/\delta)^{d+1} \cleq p^{d\CRed} L^{d+1}.
\end{equation*}

Finally, let $f \in \Lp{2}{\Omega}$ with $\supp{f} \subseteq D$. In order to show that the error bound from \cref{Coarse_op_multi} is applicable to the function $S_{\Elements} f \in \SkpO{1}{p}{\Elements}$, we first need to establish the fact that $S_{\Elements} f \in \SHarm{\inflateN{B}{\delta L}}$. According to \cref{Space_SHarm_Props}, it suffices to prove that the sets $\inflateN{B}{\delta L}$ and $D$ are disjoint. To that end, we choose a point $z \in \closureN{\inflateN{B}{\delta L}}$ with $\dist[2]{\inflateN{B}{\delta L}}{D} = \dist[2]{z}{D}$. Then, $\dist[2]{B}{D} \leq \dist[2]{B}{z} + \dist[2]{z}{D} \leq \sqrt{d} \delta L + \dist[2]{\inflateN{B}{\delta L}}{D}$. Combined with the definition of $\delta$ and the admissibility condition, this yields
\begin{equation*}
\dist[2]{\inflateN{B}{\delta L}}{D} \geq \dist[2]{B}{D} - \sqrt{d} \delta L = \dist[2]{B}{D} - \diam[2]{B}/(2\CAdm) \geq \diam[2]{B}/(2\CAdm) > 0.
\end{equation*}

\cref{Space_SHarm_Props} implies $S_{\Elements} f \in \SHarm{\inflateN{B}{\delta L}}$, so that $\CoarseningOp{B}{\delta,L}(S_{\Elements} f) \in V_{B,D,L}$. Hence, the error bound from \cref{Coarse_op_multi} is applicable to the function $S_{\Elements} f$. Using the {\sl a priori} stability bound of the discrete solution operator $\SolutionOp{\Elements}$ (cf. \cref{Sol_op_disc}), we then estimate
\begin{equation*}
\inf_{v \in V_{B,D,L}} \norm[\Lp{2}{\Omega \cap B}]{S_{\Elements} f - v} \leq \norm[\Lp{2}{\Omega \cap B}]{S_{\Elements} f - \CoarseningOp{B}{\delta,L}(S_{\Elements} f)} \leq 2^{-L} \norm[\Lp{2}{\Omega \cap \inflateN{B}{\delta L}}]{S_{\Elements} f} \cleq 2^{-L} \norm[\Lp{2}{\Omega}]{f}.
\end{equation*}
This concludes the proof.
\end{proof}

Finally, we have everything we need to derive our main result:

\begin{proof}[Proof of \cref{Main_result}]

Let $\mvemph{A} \in \R^{N \times N}$ be the system matrix from \cref{System_matrix} and $r \in \N$ a given block rank bound. We define the asserted $\mathcal{H}$-matrix approximant $\mvemph{B} \in \R^{N \times N}$ in a block-wise fashion:

First, consider an admissible block $(I,J) \in \BPartAdm$. From \cref{Block_partition} we know that there exist boxes $B_I,B_J \subseteq \R^d$ with $\Omega_I \subseteq B_I$, $\Omega_J \subseteq B_J$ and $\diam[2]{B_I} \leq \CAdm \dist[2]{B_I}{B_J}$. In particular, $\diam[2]{B_I} \geq \diam[2]{\Omega_I} > 0$, so that \cref{Space_VBDL} is applicable to $B_I$ and $B_J$. Now, denote by $C>0$ the implicit constant from the dimension bound in \cref{Space_VBDL}. We set $\CExp := \ln(2)/C^{1/(d+1)} > 0$ and $L := \lfloor (r/C)^{1/(d+1)} p^{-\CRed} \rfloor \in \N$. Then, \cref{Space_VBDL} provides a subspace $V_{I,J,r} \subseteq V$. We apply \cref{Mat_lvl_to_fct_lvl} to this subspace and get an integer $\tilde{r} \leq \dimN{V_{I,J,r}}$ and matrices $\mvemph{X}_{I,J,r} \in \R^{I \times \tilde{r}}$ and $\mvemph{Y}_{I,J,r} \in \R^{J \times \tilde{r}}$. We set
\begin{equation*}
\restrictN{\mvemph{B}}{I \times J} := \mvemph{X}_{I,J,r} (\mvemph{Y}_{I,J,r})^T.
\end{equation*}

Second, for every small block $(I,J) \in \BPartSmall$, we make the trivial choice
\begin{equation*}
\restrictN{\mvemph{B}}{I \times J} := \restrictN{\mvemph{A}^{-1}}{I \times J}.
\end{equation*}

By \cref{H_matrices}, we have $\mvemph{B} \in \HMatrices{\BPart}{\tilde{r}}$ with a block rank bound
\begin{equation*}
\tilde{r} \leq \dimN{V_{I,J,r}} \stackrel{\text{Def.} \, C}{\leq} C p^{d\CRed} L^{d+1} \leq C (p^{\CRed} L)^{d+1} \stackrel{\text{Def.} \, L}{\leq} r.
\end{equation*}

As for the error, we get
\begin{eqnarray*}
\norm[2]{\mvemph{A}^{-1} - \mvemph{B}} &\stackrel{\cref{Block_partition}}{\cleq}& \ln(\hMin{}^{-d}) \max_{(I,J) \in \BPartAdm} \norm[2]{\restrictN{\mvemph{A}^{-1}}{I \times J} - \mvemph{X}_{I,J,r} (\mvemph{Y}_{I,J,r})^T} \\
&\stackrel{\cref{Mat_lvl_to_fct_lvl}}{\cleq}& p^{2\CStab} \ln(\hMin{}^{-d}) \hMin{}^{-d} \max_{(I,J) \in \BPartAdm} \sup_{\substack{f \in V_{I,J,r}: \\ \supp{f} \subseteq \Omega_J}} \inf_{v \in V_{I,J,r}} \frac{\norm[\Lp{2}{\Omega_I}]{S_{\Elements} f - v}}{\norm[\Lp{2}{\Omega}]{f}} \\
&\stackrel{\cref{Space_VBDL}}{\cleq}& p^{2\CStab} \ln(\hMin{}^{-d}) \hMin{}^{-d} 2^{-L} \\
&\stackrel{\text{Def.} \, L}{\cleq}& p^{2\CStab} \ln(\hMin{}^{-d}) \hMin{}^{-d} \exp(-\ln(2)(r/C)^{1/(d+1)} p^{-\CRed}) \\
&\stackrel{\text{Def.} \, \CExp}{=}& p^{2\CStab} \ln(\hMin{}^{-d}) \hMin{}^{-d} \exp(-\CExp r^{1/(d+1)} p^{-\CRed}),
\end{eqnarray*}
which finishes the proof.

\end{proof}

\section{Polynomial preserving lifting from the boundary and an elementwise defined projection} \label{Sec_Rojik_proof}

In this section, we provide the proof of \cref{Melenk_Rojik}, i.e., we devise an approximation operator $\fDef{J_{\Elements}^p}{\Skp{0}{p+2}{\Elements}}{\Skp{0}{p}{\Elements}}$ that is defined in a elementwise fashion and preserves global continuity. In other words, we need to approximate a spline $u \in \SkpO{1}{p+2}{\Elements}$ of degree $p+2$ by a spline $\tilde{u} \in \SkpO{1}{p}{\Elements}$ of degree $p$ in a way that is stable in $p$.


The subsequent construction of $J_{\Elements}^p$ generalizes \cite[L.4.1., D.2.5., D.2.1.]{Melenk_Rojik} from $d \in \set{1,2,3}$ to arbitrary spatial dimension $d \geq 1$. In \cite{Melenk_Rojik}, $J_{\Elements}^p$ was defined in a piecewise manner by means of an operator $\fDef{\hat{J}^p}{\Hk{(d+1)/2}{\hat{T}}}{\Pp{p}{\hat{T}}}$ on the reference simplex $\hat{T} \subseteq \R^d$. While the results from \cite{Melenk_Rojik} produce the optimal powers of $p$, the proofs are rather involved due to the nonlocality of the pertinent fractional Sobolev norms. In the present paper, we only need the specific case of \emph{polynomial} inputs $f \in \Pp{p+2}{\hat{T}}$. Therefore, using inverse inequalities, we may work with the much simpler norms $\norm[\Lp{2}{\hat{T}}]{\cdot}$ and $\norm[\Lp{2}{\boundaryN{\hat{T}}}]{\cdot}$ at the expense of powers of $p$.
The definition of the operator $\hat{J}^p$ from \cite{Melenk_Rojik} can easily be generalized to arbitrary space dimension $d \geq 1$. However, in order to derive error estimates, a polynomial preserving lifting operator has to be used. The literature on polynomial preserving liftings is extensive (e.g., \cite{Katz_Polynomial_lifting,babuska-craig-mandel-pitkaranta91, bernardi-dauge-maday92,Munoz_Polynomial_lifting,bernardi-maday97,bernardi-dauge-maday07}), but many authors focus on the special cases $d \in \set{2,3}$ and stability estimates are usually phrased in terms of the norm $\norm[\Hk{1/2}{\boundaryN{\hat{T}}}]{\cdot}$. In the sequel, we present a polynomial preserving lifting for arbitrary space dimension $d \geq 1$ that seems to have been overlooked in the pertinent literature. As usual, we first devise a lifting from one of $\hat{T}$'s hyperplanes $\hat{\Gamma} \subseteq \boundaryN{\hat{T}}$ into its interior (cf. \cref{Lifting_op_single}). Then, in \cref{Lifting_op}, we combine the liftings of all such $\hat{\Gamma}$.

To get things going, let $d \geq 1$ as before and consider the \emph{reference $d$-simplex} $\hat{T} := \hat{T}^d := \Set{x \in [0,1]^d}{\norm[1]{x} \leq 1} \subseteq \R^d$. (In this section, we use the closed version in order to ease notation.) We denote by $\Nodes(\hat{T}) := \set{0,e_1,\dots,e_d}$ its set of nodes, $0 \in \R^d$ being the origin and $e_i \in \R^d$ being the $i$-th Euclidean unit vector. In order to describe the boundary $\boundaryN{\hat{T}}$ efficiently, let us introduce \emph{$k$-simplices}. The definition uses the notion of convex hulls, $\convexhull{\Omega} := \Set{(1-t)x+ty}{x,y \in \Omega, t \in [0,1]}$ for all $\Omega \subseteq \R^d$.

\begin{definition} \label{Simplex}
Let $k \in \set{0,\dots,d}$. A subset $\hat{\Sigma} \subseteq \hat{T}$ is called \emph{$k$-simplex}, if there exist $k+1$ distinct nodes $\hat{N}_0,\dots,\hat{N}_k \in \Nodes(\hat{T})$ such that $\hat{\Sigma} = \convexhullN{\set{\hat{N}_0,\dots,\hat{N}_k}}$.
\end{definition}

(Again, we think of $k$-simplices $\hat{\Sigma}$ as being closed.) Note that $\hat{\Sigma} \subseteq \boundaryN{\hat{T}}$, if $k \leq d-1$, and $\hat{\Sigma} = \hat{T}$, if $k=d$. Recall that any $k$-simplex $\hat{\Sigma} = \convexhullN{\set{\hat{N}_0,\dots,\hat{N}_k}}$ is isomorphic to the reference $k$-simplex $\hat{T}^k = \Set{t \in [0,1]^k}{\norm[1]{t} \leq 1} \subseteq \R^k$. In fact, consider the affine parametrization $\fDef{\sigma}{\hat{T}^k}{\hat{\Sigma}}$, $\sigma(t) := \hat{N}_0 + \sum_{i=1}^{k} t_i(\hat{N}_i-\hat{N}_0)$. Then, there holds the representation
\begin{equation} \label{Convex_hull_param}
\hat{\Sigma} = \convexhullN{\set{\hat{N}_0,\dots,\hat{N}_k}} = \Set{\sigma(t)}{t \in \hat{T}^k}.
\end{equation}

Next, let us introduce some function spaces.

\begin{definition} \label{Space_Pp_boundary}
Let $k \in \set{0,\dots,d}$ and consider a $k$-simplex $\hat{\Sigma} \subseteq \hat{T}$ along with an affine parametrization $\fDef{\sigma}{\hat{T}^k}{\hat{\Sigma}}$. We define the spaces
\begin{equation*}
\begin{array}{rcl}
\Pp{p}{\hat{\Sigma}} &:=& \Set{\fDef{f}{\hat{\Sigma}}{\R}}{f \circ \sigma \in \Pp{p}{\hat{T}^k}}, \\
\Pp{p}{\boundaryN{\hat{T}}} &:=& \Set{f \in \Ck{0}{\boundaryN{\hat{T}}}}{\forall (d-1)\text{-simplices} \,\, \hat{\Gamma} \subseteq \hat{T}: \restrictN{f}{\hat{\Gamma}} \in \Pp{p}{\hat{\Gamma}}}.
\end{array}
\end{equation*}
\end{definition}

Note that a function $f \in \Pp{p}{\boundaryN{\hat{T}}}$ necessarily satisfies some compatibility conditions along all $(d-2)$-simplices $\hat{\Sigma} \subseteq \hat{T}$.

Before we construct the lifting operator in an arbitrary space dimension $d \geq 1$, let us first look at the case $d=3$, i.e., $\hat{T} = \Set{(x_1,x_2,x_3)}{x_i \in [0,1], x_1+x_2+x_3 \leq 1}$. We enumerate the nodes as $\hat{N}_0 := (0,0,0)$, $\hat{N}_1 := (1,0,0)$, $\hat{N}_2 := (0,1,0)$ and $\hat{N}_3 := (0,0,1)$. Now, looking at \cref{Figure_Lifting_operator}, our goal is to find a lifting from the bottom face $\hat{\Gamma} := \convexhullN{\set{\hat{N}_0, \hat{N}_1, \hat{N}_2}} = \Set{x \in \hat{T}}{x_3 = 0}$ upwards, into the $x_3$-dimension.

\begin{figure}[H]
\begin{center}
\includegraphics[width=0.5\textwidth, trim=0cm 0cm 0cm 0cm]{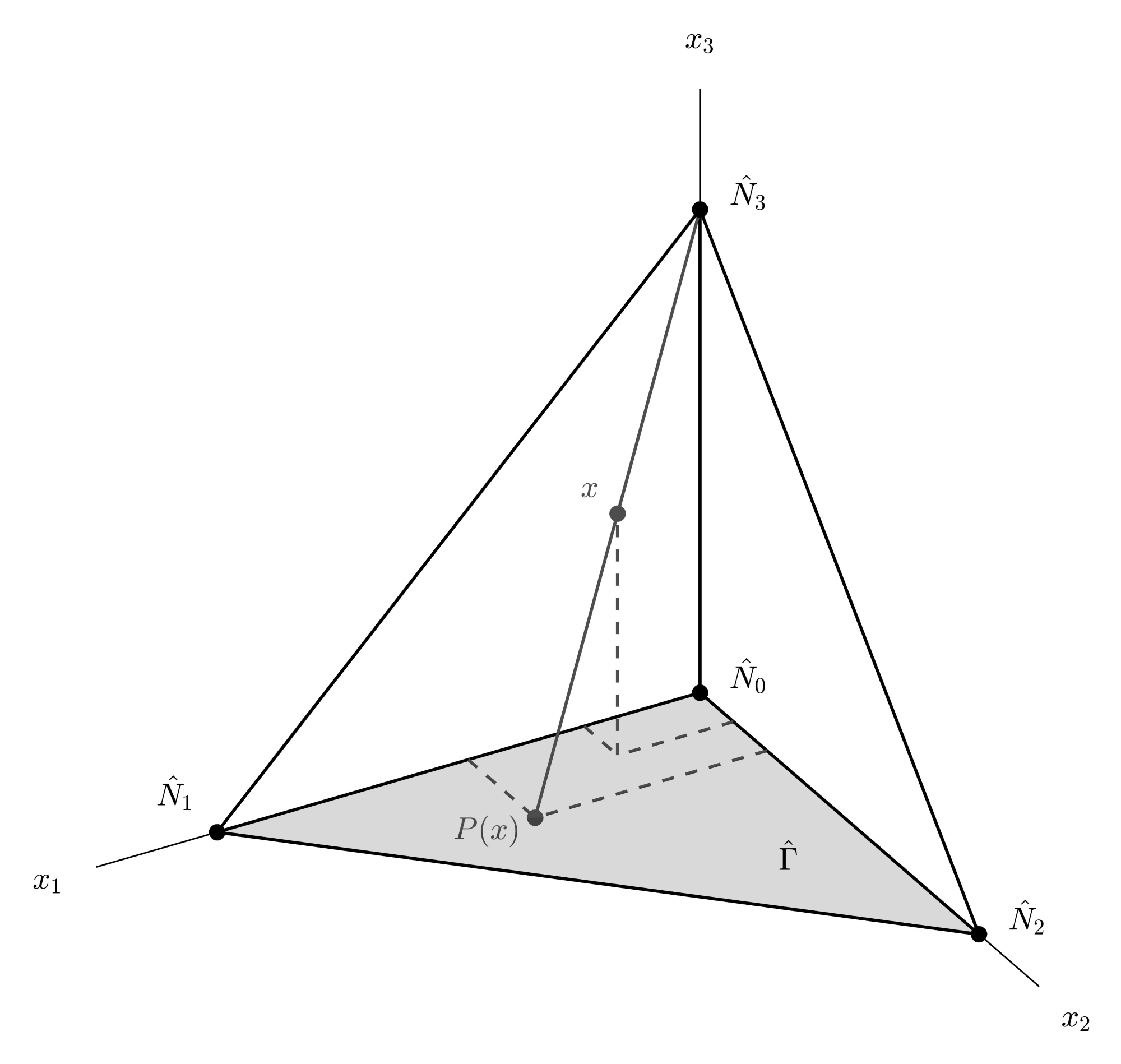}
\caption{The lifting operator in the case $d=3$.}
\label{Figure_Lifting_operator}
\end{center}
\end{figure}

Consider given boundary data $f \in \Ck{0}{\hat{\Gamma}}$. Given any point $y \in \hat{\Gamma}$, the basic idea is to propagate the value $f(y)$ along the line segment from $y$ to $\hat{N}_3$. To be more precise, let us denote the result of the lifting process by $\hat{L}_{\hat{\Gamma}} f \in \Ck{0}{\hat{T}}$. In order to define the value $(\hat{L}_{\hat{\Gamma}} f)(x)$, for any given point $x \in \hat{T}\backslash\set{\hat{N}_3}$, we proceed as follows: First, we cast a ray from the top node $\hat{N}_3$ through the given point $x$ and compute the intersection point with the bottom face $\hat{\Gamma}$. In fact, this intersection point is given by $P(x) := (x_1/(1-x_3), x_2/(1-x_3), 0)$. Then, we set
\begin{equation*}
(\hat{L}_{\hat{\Gamma}} f)(x) := (1-x_3)^p f(P(x)) = (1-x_3)^p f\(\frac{x_1}{1-x_3}, \frac{x_2}{1-x_3}, 0\).
\end{equation*}

The purpose of the prefactor $(1-x_3)^p$ is to guarantee that $\hat{L}_{\hat{\Gamma}} f \in \Pp{p}{\hat{T}}$, whenever $f \in \Pp{p}{\hat{\Gamma}}$, by ``undoing'' the division by $(1-x_3)$ inside the argument of $f$. In fact, if we plug in $f(x) = \sum_{\abs{\alpha} \leq p} f_{\alpha}x^{\alpha}$, we can see that
\begin{equation*}
(\hat{L}_{\hat{\Gamma}} f)(x) = (1-x_3)^p f\(\frac{x_1}{1-x_3}, \frac{x_2}{1-x_3}, 0\) = \sum_{\abs{\alpha} \leq p} f_{\alpha} (1-x_3)^{p-\abs{\alpha}} x_1^{\alpha_1} x_2^{\alpha_2} 0^{\alpha_3} \in \Pp{p}{\hat{T}}.
\end{equation*}

Note that, since $f$ is bounded, we get the added benefit of $\lim_{x \rightarrow \hat{N}_3} (\hat{L}_{\hat{\Gamma}} f)(x) = 0$. Finally, the prefactor satisfies $(1-x_3)^p = 1$, for all $x \in \hat{\Gamma}$, so that $\restrict{\hat{L}_{\hat{\Gamma}} f}{\hat{\Gamma}} = f$.

Now let us have a look at what happens at the edges and faces connecting $\hat{\Gamma}$ with $\hat{N}_3$. Assume, for example, that the input $f \in \Ck{0}{\hat{\Gamma}}$ vanishes at one of $\hat{\Gamma}$'s nodes, say, $f(\hat{N}_0) = 0$. Then it is immediately clear from the picture in \cref{Figure_Lifting_operator} that $(\hat{L}_{\hat{\Gamma}} f)(x) = 0$ for all $x$ on the edge $\convexhullN{\set{\hat{N}_0,\hat{N}_3}}$. (More rigorously, if $x = (0,0,x_3)$, then $(\hat{L}_{\hat{\Gamma}} f)(x) = (1-x_3)^p f(0,0,0) = 0$.) As another example, consider the case of an input $g \in \Ck{0}{\hat{\Gamma}}$ that vanishes on one of $\hat{\Gamma}$'s edges, say, $g=0$ on $\convexhullN{\set{\hat{N}_0,\hat{N}_1}}$. Then, again by \cref{Figure_Lifting_operator}, we can see that $(\hat{L}_{\hat{\Gamma}} g)(x) = 0$ for all $x$ on the face $\convexhullN{\set{\hat{N}_0,\hat{N}_1,\hat{N}_3}}$. As a mnemonic, we may say that the operator $\hat{L}_{\hat{\Gamma}}$ lifts zeros on $k$-simplices to zeros on $(k+1)$-simplices.

This concludes our introductory example in $d=3$ space dimensions and we are now ready to treat the general case $d \geq 1$.

\begin{lemma} \label{Lifting_op_single}
Let $\hat{\Gamma} \subseteq \hat{T}$ be a $(d-1)$-simplex, say, $\hat{\Gamma} = \convexhullN{\set{\hat{N}_0,\dots,\hat{N}_{d-1}}}$. Denote the remaining node of $\hat{T}$ by $\hat{N}_d \in \Nodes(\hat{T})$. Then, there exists a lifting operator $\fDef{\hat{L}_{\hat{\Gamma}}}{\Ck{0}{\hat{\Gamma}}}{\Ck{0}{\hat{T}}}$ with the following properties:
\begin{enumerate}
\item For every $f \in \Ck{0}{\hat{\Gamma}}$, there holds $\restrict{\hat{L}_{\hat{\Gamma}} f}{\hat{\Gamma}} = f$.

\item For every $f \in \Pp{p}{\hat{\Gamma}}$, there holds $\hat{L}_{\hat{\Gamma}} f \in \Pp{p}{\hat{T}}$.

\item For every $f \in \Ck{0}{\hat{\Gamma}}$, there holds $(\hat{L}_{\hat{\Gamma}} f)(\hat{N}_d) = 0$.

\item Let $k \in \set{0,\dots,d-1}$ and let $\hat{\Sigma} \subseteq \hat{T}$ be a $k$-simplex with $\hat{\Sigma} \subseteq \hat{\Gamma}$. Furthermore, consider the $(k+1)$-simplex $\hat{\Sigma}^+ := \convexhull{\hat{\Sigma} \cup \set{\hat{N}_d}} \subseteq \hat{T}$. Then, for every $f \in \Ck{0}{\hat{\Gamma}}$ with $\restrictN{f}{\hat{\Sigma}} = 0$, there holds $\restrict{\hat{L}_{\hat{\Gamma}} f}{\hat{\Sigma}^+} = 0$.

\item For all $f \in \Ck{0}{\hat{\Gamma}}$, there holds the stability bound
\begin{equation*}
\norm[\Lp{2}{\hat{T}}]{\hat{L}_{\hat{\Gamma}} f} \cleq p^{-1/2} \norm[\Lp{2}{\hat{\Gamma}}]{f}.
\end{equation*}
(In the case $d=1$, we interpret $\norm[\Lp{2}{\hat{\Gamma}}]{f} = \norm[\lp{2}{\hat{\Gamma}}]{f}$.)

\end{enumerate}
\end{lemma}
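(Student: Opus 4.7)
The plan is to construct $\hat L_{\hat\Gamma}$ by central projection from the vertex opposite $\hat\Gamma$. After relabeling I may assume $\hat\Gamma = \convexhullN{\set{\hat N_0,\ldots,\hat N_{d-1}}}$, so the remaining vertex is $\hat N_d$. Let $\lambda_0,\ldots,\lambda_d \in \Pp{1}{\hat T}$ denote the affine barycentric coordinates on $\hat T$; they satisfy $\sum_{i=0}^{d}\lambda_i \equiv 1$, with $\hat\Gamma = \set{x \in \hat T : \lambda_d(x) = 0}$ and $\hat N_d = \set{x \in \hat T : \lambda_d(x) = 1}$. Define the central projection
\begin{equation*}
\fDef{P}{\hat T \setminus \set{\hat N_d}}{\hat\Gamma}, \qquad P(x) := \sum_{i=0}^{d-1} \frac{\lambda_i(x)}{1-\lambda_d(x)}\, \hat N_i,
\end{equation*}
and set $(\hat L_{\hat\Gamma} f)(x) := (1-\lambda_d(x))^p\, f(P(x))$ for $x \ne \hat N_d$, extended by $0$ at $\hat N_d$. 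The prefactor $(1-\lambda_d)^p$ is the pivotal choice: it neutralizes the denominators $(1-\lambda_d)$ that appear inside $f(P(x))$ (which will give polynomial preservation) and forces $\hat L_{\hat\Gamma} f$ to vanish at $\hat N_d$ (which gives continuity).

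Items (1)--(4) are all essentially immediate from this definition. On $\hat\Gamma$ one has $\lambda_d \equiv 0$ and $P = \identity$, giving (1). For (2), expand any $f \in \Pp{p}{\hat\Gamma}$ as $f = \sum_{\abs{\beta}\le p} c_\beta \lambda_0^{\beta_0}\cdots\lambda_{d-1}^{\beta_{d-1}}$; substitution yields
\begin{equation*}
(\hat L_{\hat\Gamma} f)(x) = \sum_{\abs{\beta}\le p} c_\beta\, (1-\lambda_d(x))^{p-\abs{\beta}}\, \lambda_0(x)^{\beta_0}\cdots\lambda_{d-1}(x)^{\beta_{d-1}} \in \Pp{p}{\hat T},
\end{equation*}
where the exponents $p-\abs{\beta}\ge 0$ keep the total degree at most $p$. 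Item (3) follows from $\abs{(\hat L_{\hat\Gamma} f)(x)} \le (1-\lambda_d(x))^p \, \norm[\Lp{\infty}{\hat\Gamma}]{f} \to 0$ as $x \to \hat N_d$. For (4), note that $\hat\Sigma^+$ is exactly the set of $x \in \hat T$ with $\lambda_j(x) = 0$ for all $j \notin \set{i_0,\ldots,i_k,d}$; the same vanishing condition is inherited by the barycentric coordinates of $P(x)$, so $P(\hat\Sigma^+ \setminus \set{\hat N_d}) \subseteq \hat\Sigma$, and the hypothesis $\restrictN{f}{\hat\Sigma} = 0$ then forces $\restrict{\hat L_{\hat\Gamma} f}{\hat\Sigma^+} = 0$.

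The main quantitative step is (5). I use the change of variables $\fDef{\Phi}{[0,1)\times\hat\Gamma}{\hat T \setminus \set{\hat N_d}}$, $\Phi(t,y) := (1-t)y + t\hat N_d$. Since $\Phi$ is affine in $y$ with linear part contracting $\hat\Gamma$ by a factor $(1-t)$ in each of the $d-1$ tangential directions, a direct determinant computation (after fixing an affine parametrization of $\hat\Gamma$ and a single column reduction absorbing the parametric variables) shows that the Jacobian has the form $c(1-t)^{d-1}$ for a geometric constant $c>0$ independent of $(t,y)$. Noting that $\lambda_d(\Phi(t,y)) = t$ and $P(\Phi(t,y)) = y$, Fubini then yields
\begin{equation*}
\norm[\Lp{2}{\hat T}]{\hat L_{\hat\Gamma} f}^2 = c \int_0^1 (1-t)^{2p+d-1}\, dt \int_{\hat\Gamma} \abs{f(y)}^2\, dS(y) = \frac{c}{2p+d}\, \norm[\Lp{2}{\hat\Gamma}]{f}^2,
\end{equation*}
which delivers the sharp $p^{-1/2}$ factor. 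The degenerate case $d=1$ collapses to $\int_0^1 (1-t)^{2p}\, dt = 1/(2p+1)$ with the $\lp{2}{\hat\Gamma}$ interpretation of $\norm[\Lp{2}{\hat\Gamma}]{f}$. The only slightly delicate step is the Jacobian formula above, which is a routine determinant manipulation; everything else, including the sharp $p^{-1/2}$ rate and items (1)--(4), hinges on the single choice of the prefactor $(1-\lambda_d)^p$.
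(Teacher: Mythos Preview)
Your construction is correct and coincides with the paper's: your prefactor $(1-\lambda_d(x))^p$ equals the paper's $\skalar{\hat N_d - x}{n}^p$, and your central projection $P$ is the same map, just expressed in barycentric rather than normal-vector coordinates. The stability argument via the change of variables $\Phi(t,y)=(1-t)y+t\hat N_d$ with Jacobian proportional to $(1-t)^{d-1}$ is likewise the same computation as the paper's parametrization $\tau(t,t_d)=(1-t_d)\gamma(t)+t_d\hat N_d$.
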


\begin{proof}
First, since the vectors $\set{\hat{N}_1-\hat{N}_0,\dots,\hat{N}_d-\hat{N}_0} \subseteq \R^d$ form a basis, we may pick a normal vector $n \in \R^d$ of $\hat{\Gamma}$ such that $\skalar{\hat{N}_1-\hat{N}_0}{n} = \dots = \skalar{\hat{N}_{d-1}-\hat{N}_0}{n} = 0$ and $\skalar{\hat{N}_d-\hat{N}_0}{n} = 1$. Note that $n$ can be used to write $\hat{\Gamma}$ in the normal form
\begin{equation}
\label{eq:hatGamma-normal-form}
\hat{\Gamma} = \Set{x \in \hat{T}}{\skalar{x-\hat{N}_0}{n} = 0}.
\end{equation}

For every $x \in \hat{T} \backslash \set{\hat{N}_d}$, the line passing through $\hat{N}_d$ and $x$ is given by $\Set{\hat{N}_d + s(x-\hat{N}_d)}{s \in \R}$. Using the normal form (\ref{eq:hatGamma-normal-form}), the intersection point with $\hat{\Gamma}$ can easily be computed:
\begin{equation*}
P(x) := \hat{N}_d + \skalar{\hat{N}_d-x}{n}^{-1}(x-\hat{N}_d) \in \hat{\Gamma}.
\end{equation*}
Let us verify that indeed $\skalar{\hat{N}_d-x}{n} \neq 0$: Since $x \in \hat{T} \backslash \set{\hat{N}_d}$, we know from \cref{Convex_hull_param} that it can be written in the form $x = \hat{N}_0 + \sum_{i=1}^{d} t_i(\hat{N}_i-\hat{N}_0)$, where $t_d \in [0,1)$. But then $\skalar{\hat{N}_d-x}{n} = \skalar{\hat{N}_d-\hat{N}_0}{n} - \sum_{i=1}^{d} t_i\skalar{\hat{N}_i-\hat{N}_0}{n} = 1-t_d > 0$.

Now, for every $f \in \Ck{0}{\hat{\Gamma}}$, consider the lifting $\hat{L}_{\hat{\Gamma}} f$ defined as follows:
\begin{equation*}
\begin{array}{llcl}
\forall x \in \hat{T}\backslash\set{\hat{N}_d}: \quad \quad & (\hat{L}_{\hat{\Gamma}} f)(x) &:=& \skalar{\hat{N}_d-x}{n}^p f(P(x)), \\
 & (\hat{L}_{\hat{\Gamma}} f)(\hat{N}_d) &:=& 0.
\end{array}
\end{equation*}

Since $f$ is bounded, it is easy to check that $\hat{L}_{\hat{\Gamma}} f \in \Ck{0}{\hat{T}}$. Furthermore, we have $\restrict{\hat{L}_{\hat{\Gamma}} f}{\hat{\Gamma}} = f$, which follows from the identities $\skalar{\hat{N}_d-x}{n} = 1$ and $P(x) = x$, for all $x \in \hat{\Gamma}$.

Next, consider the case $f \in \Pp{p}{\hat{\Gamma}}$. Expanding $f(x) = \sum_{\abs{\alpha} \leq p} f_{\alpha} x^{\alpha}$, we can see that $\hat{L}_{\hat{\Gamma}} f$ is a polynomial as well:
\begin{eqnarray*}
\forall x \in \hat{T}: \quad \quad (\hat{L}_{\hat{\Gamma}} f)(x) &=& \skalar{\hat{N}_d-x}{n}^p \sum_{\abs{\alpha} \leq p} f_{\alpha} (\hat{N}_d + \skalar{\hat{N}_d-x}{n}^{-1}(x-\hat{N}_d))^{\alpha} \\
&=& \sum_{\abs{\alpha} \leq p} f_{\alpha} \skalar{\hat{N}_d-x}{n}^{p-\abs{\alpha}} (\skalar{\hat{N}_d-x}{n}\hat{N}_d + x - \hat{N}_d)^{\alpha} \in \Pp{p}{\hat{T}}.
\end{eqnarray*}

Now, let $k \in \set{0,\dots,d-1}$ and consider a $k$-simplex $\hat{\Sigma} \subseteq \hat{T}$ with $\hat{\Sigma} \subseteq \hat{\Gamma}$. Let $\hat{\Sigma}^+ := \convexhull{\hat{\Sigma} \cup \set{\hat{N}_d}}$ and consider a function $f \in \Ck{0}{\hat{\Gamma}}$ with $\restrictN{f}{\hat{\Sigma}} = 0$. In order to prove the identity $\restrict{\hat{L}_{\hat{\Gamma}} f}{\hat{\Sigma}^+} = 0$, let $x \in \hat{\Sigma}^+$ be given. In the case $x=\hat{N}_d$, we immediately get $(\hat{L}_{\hat{\Gamma}} f)(x) = 0$ from the definition of $\hat{L}_{\hat{\Gamma}} f$. In the non-trivial case $x \neq \hat{N}_d$, we know that there exist $y \in \hat{\Sigma}$ and $t_d \in [0,1]$ such that $x = (1-t_d)y + t_d\hat{N}_d$. Since $y \in \hat{\Sigma} \subseteq \hat{\Gamma}$, we have $P(x) = P(y) = y$, so that $(\hat{L}_{\hat{\Gamma}} f)(x) = \skalar{\hat{N}_d-x}{n}^p f(y) = 0$.

Finally, as for the stability bound, we only prove the non-trivial case $d \geq 2$. To this end, we consider the following parametrizations of $\hat{\Gamma}$ and $\hat{T}$:
\begin{equation*}
\fDefB[\gamma]{\hat{T}^{d-1}}{\hat{\Gamma}}{t}{\hat{N}_0 + \sum_{i=1}^{d-1} t_i(\hat{N}_i-\hat{N}_0)}, \quad \quad \quad \fDefB[\tau]{\hat{T}^{d-1} \times [0,1]}{\hat{T}}{(t,t_d)}{(1-t_d)\gamma(t) + t_d\hat{N}_d}.
\end{equation*}

There hold the identities
\begin{equation*}
\sqrt{\det{(\gradN{\gamma})(t)^T (\gradN{\gamma})(t)}} = C_1, \quad \quad \quad \abs{\det{(\gradN{\tau})(t,t_d)}} = (1-t_d)^{d-1} C_2,
\end{equation*}
where $C_1 := \sqrt{\det{(\skalar{\hat{N}_i-\hat{N}_0}{\hat{N}_j-\hat{N}_0})_{i,j=1}^{d-1}}}$ and $C_2 := \abs{\det{(\hat{N}_1-\hat{N}_0|\dots|\hat{N}_d-\hat{N}_0)}}$. Exploiting the relations $\skalar{\hat{N}_d-\tau(t,t_d)}{n} = 1-t_d$ and $P(\tau(t,t_d)) = P(\gamma(t)) = \gamma(t)$, we compute, for all $f \in \Ck{0}{\hat{\Gamma}}$,
\begin{eqnarray*}
\norm[\Lp{2}{\hat{T}}]{\hat{L}_{\hat{\Gamma}} f}^2 &=& \I{\hat{T}}{\skalar{\hat{N}_d-x}{n}^{2p} f(P(x))^2}{x} \\
&=& \I[1]{0}{\I{\hat{T}^{d-1}}{\skalar{\hat{N}_d-\tau(t,t_d)}{n}^{2p} f(P(\tau(t,t_d)))^2 \abs{\det{(\gradN{\tau})(t,t_d)}}}{t}}{t_d} \\
&=& \frac{C_2}{C_1} \(\I[1]{0}{(1-t_d)^{2p+d-1}}{t_d}\)\(\I{\hat{T}^{d-1}}{f(\gamma(t))^2 \sqrt{\det{(\gradN{\gamma})(t)^T (\gradN{\gamma})(t)}}}{t}\) \\
&=& \frac{C_2}{C_1} (2p+d)^{-1} \norm[\Lp{2}{\hat{\Gamma}}]{f}^2.
\end{eqnarray*}

This finishes the proof.

\end{proof}

Now that the lifting operators for all $(d-1)$-simplices $\hat{\Gamma} \subseteq \boundaryN{\hat{T}}$ are available, we can combine them.

\begin{lemma} \label{Lifting_op}
There exists a \emph{lifting operator} $\fDef{\hat{L}}{\Ck{0}{\boundaryN{\hat{T}}}}{\Ck{0}{\hat{T}}}$ with the following properties:
\begin{enumerate}
\item For every $f \in \Ck{0}{\boundaryN{\hat{T}}}$, there holds $\restrict{\hat{L} f}{\boundaryN{\hat{T}}} = f$.

\item For every $f \in \Pp{p}{\boundaryN{\hat{T}}}$, there holds $\hat{L} f \in \Pp{p}{\hat{T}}$.

\item For all $f \in \Pp{p}{\boundaryN{\hat{T}}}$, there holds the stability estimate
\begin{equation*}
\norm[\Lp{2}{\hat{T}}]{\hat{L} f} \cleq p^{(d-2)/2} \norm[\Lp{2}{\boundaryN{\hat{T}}}]{f}.
\end{equation*}
(In the case $d=1$, we interpret $\norm[\Lp{2}{\boundaryN{\hat{T}}}]{f} = \norm[\lp{2}{\boundaryN{\hat{T}}}]{f}$.)

\end{enumerate}
\end{lemma}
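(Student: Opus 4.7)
My plan is to build $\hat{L}$ iteratively by treating one hyperface of $\hat{T}$ at a time via the single-face lifting $\hat{L}_{\hat{\Gamma}}$ from \cref{Lifting_op_single}. Enumerate the $d+1$ hyperfaces as $\hat{\Gamma}_j := \convexhull{\Nodes(\hat{T}) \setminus \set{\hat{N}_j}}$ for $j = 0,\dots,d$. Set $u_0 := 0$ and define recursively
\begin{equation*}
u_{j+1} := u_j + \hat{L}_{\hat{\Gamma}_j}(\restrictN{f}{\hat{\Gamma}_j} - \restrictN{u_j}{\hat{\Gamma}_j}) \in \Ck{0}{\hat{T}}, \quad j=0,1,\dots,d.
\end{equation*}
The lifting is then $\hat{L} f := u_{d+1}$.

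I would verify property $(1)$ by induction on $j$, showing that $\restrictN{u_j}{\hat{\Gamma}_i} = \restrictN{f}{\hat{\Gamma}_i}$ for every $i < j$. Item $(1)$ of \cref{Lifting_op_single} takes care of the newly processed face $\hat{\Gamma}_j$. For an already-processed face $\hat{\Gamma}_i$ with $i < j$, the input $g := \restrictN{f}{\hat{\Gamma}_j} - \restrictN{u_j}{\hat{\Gamma}_j}$ to the single-face lifting vanishes on $\hat{\Gamma}_i \cap \hat{\Gamma}_j$. Since the apex of $\hat{L}_{\hat{\Gamma}_j}$ is $\hat{N}_j$ and $\convexhull{(\hat{\Gamma}_i \cap \hat{\Gamma}_j) \cup \set{\hat{N}_j}} = \hat{\Gamma}_i$, item $(4)$ of \cref{Lifting_op_single} gives $\restrictN{\hat{L}_{\hat{\Gamma}_j}(g)}{\hat{\Gamma}_i} = 0$, closing the induction. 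Property $(2)$ is immediate from item $(2)$ of \cref{Lifting_op_single} at each step.

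The main obstacle is the stability bound in $(3)$. Item $(5)$ of \cref{Lifting_op_single} bounds each single step by
\begin{equation*}
\norm[\Lp{2}{\hat{T}}]{u_{j+1}-u_j} \cleq p^{-1/2} \left( \norm[\Lp{2}{\boundaryN{\hat{T}}}]{f} + \norm[\Lp{2}{\hat{\Gamma}_j}]{\restrictN{u_j}{\hat{\Gamma}_j}} \right),
\end{equation*}
so the task reduces to controlling the traces $\norm[\Lp{2}{\hat{\Gamma}_j}]{\restrictN{u_j}{\hat{\Gamma}_j}}$ without losing too many powers of $p$. The key observation is that the restriction $\restrictN{\hat{L}_{\hat{\Gamma}_i}(g_i)}{\hat{\Gamma}_k}$ to another hyperface $\hat{\Gamma}_k$ (with $k \neq i$) is itself a single-face lifting \emph{within} the $(d-1)$-simplex $\hat{\Gamma}_k$, namely from the $(d-2)$-subface $\hat{\Gamma}_i \cap \hat{\Gamma}_k$ with apex $\hat{N}_i \in \hat{\Gamma}_k$; this can be read off directly from the explicit formula in the proof of \cref{Lifting_op_single}. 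I would set the whole argument up as an induction on $d$, with base case $d=1$ supplied by item $(5)$, and track the growth of $\norm[\Lp{2}{\hat{\Gamma}_i}]{g_i}$ along the recursion by applying item $(5)$ of \cref{Lifting_op_single} both in dimension $d$ (for the lifting step itself) and in dimension $d-1$ (for the traces onto subsequent hyperfaces). Carefully tallying the resulting $p$-powers should produce the claimed exponent $p^{(d-2)/2}$.
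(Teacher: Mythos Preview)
Your construction of $\hat{L}$ via sequential face-by-face correction is a genuinely different route from the paper's. The paper introduces the \emph{symmetrized} auxiliary operator $\hat{M}f := \sum_{\hat{\Gamma}} \hat{L}_{\hat{\Gamma}}(\restrictN{f}{\hat{\Gamma}})$, shows that $(\identity - c_{d-1}\hat{R}\hat{M})\cdots(\identity - c_0\hat{R}\hat{M})f = 0$ for suitable constants $c_k = (d-k)^{-1}$ (each factor kills all $k$-simplices simultaneously), and reads off $\hat{L}$ from the resulting polynomial identity in $\hat{R}\hat{M}$. The stability bound then reduces to the single estimate $\norm[\Lp{2}{\boundaryN{\hat{T}}}]{\hat{R}\hat{M}g} \cleq p^{1/2}\norm[\Lp{2}{\boundaryN{\hat{T}}}]{g}$ (multiplicative trace plus inverse inequality), iterated at most $d-1$ times and then multiplied by the $p^{-1/2}$ from item~(5). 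Your approach instead handles the faces one at a time; items (1) and (2) are argued correctly, and your observation that $\restrict{\hat{L}_{\hat{\Gamma}_i}(g_i)}{\hat{\Gamma}_k}$ is again a single-face lifting inside $\hat{\Gamma}_k$ is the right structural insight for an induction on $d$.

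However, there is a genuine gap in your stability argument as stated. Applying item~(5) in dimension $d-1$ bounds $\norm[\Lp{2}{\hat{\Gamma}_j}]{\restrict{\hat{L}_{\hat{\Gamma}_i}(g_i)}{\hat{\Gamma}_j}}$ by $p^{-1/2}\norm[\Lp{2}{\hat{\Gamma}_i \cap \hat{\Gamma}_j}]{g_i}$, i.e., by a norm on a $(d-2)$-face --- not by the quantity $\norm[\Lp{2}{\hat{\Gamma}_i}]{g_i}$ you say you want to track. Closing this loop requires the polynomial trace/inverse inequality $\norm[\Lp{2}{\hat{\Gamma}_i \cap \hat{\Gamma}_j}]{g_i} \cleq p\,\norm[\Lp{2}{\hat{\Gamma}_i}]{g_i}$, which you never invoke. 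If you simply feed that in, the naive recursion $a_j \cleq \norm[\Lp{2}{\boundaryN{\hat{T}}}]{f} + p^{1/2}\sum_{i<j} a_i$ over the $d+1$ faces yields $a_d \cleq p^{d/2}\norm[\Lp{2}{\boundaryN{\hat{T}}}]{f}$ and hence only $p^{(d-1)/2}$, off by $p^{1/2}$. To hit the claimed $p^{(d-2)/2}$ with your construction you must actually use the induction hypothesis on the full $(d-1)$-dimensional operator: recognize that $\restrictN{u_j}{\hat{\Gamma}_j}$ is a partial $(d-1)$-dimensional lifting of $\restrictN{f}{\boundaryN{\hat{\Gamma}_j}}$, bound it by $p^{(d-3)/2}\norm[\Lp{2}{\boundaryN{\hat{\Gamma}_j}}]{f}$, and then use trace/inverse once to get $\norm[\Lp{2}{\boundaryN{\hat{\Gamma}_j}}]{f} \cleq p\,\norm[\Lp{2}{\boundaryN{\hat{T}}}]{f}$. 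That does give $a_j \cleq p^{(d-1)/2}\norm[\Lp{2}{\boundaryN{\hat{T}}}]{f}$ uniformly in $j$ and closes at the right power --- but it goes beyond ``applying item~(5) in dimensions $d$ and $d-1$'' as you wrote. The paper's symmetric approach avoids this induction on $d$ entirely.
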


\begin{proof}
For all $(d-1)$-simplices $\hat{\Gamma} \subseteq \hat{T}$, denote by $\fDef{\hat{L}_{\hat{\Gamma}}}{\Ck{0}{\hat{\Gamma}}}{\Ck{0}{\hat{T}}}$ the corresponding lifting operators from \cref{Lifting_op_single}. First, we define an auxiliary operator $\fDef{\hat{M}}{\Ck{0}{\boundaryN{\hat{T}}}}{\Ck{0}{\hat{T}}}$: For every $f \in \Ck{0}{\boundaryN{\hat{T}}}$, we set $\hat{M} f := \sum_{\hat{\Gamma}} \hat{L}_{\hat{\Gamma}} f \in \Ck{0}{\hat{T}}$, where $\hat{L}_{\hat{\Gamma}} f$ is meant as an abbreviation for $\hat{L}_{\hat{\Gamma}}(\restrictN{f}{\hat{\Gamma}})$. Before we construct the alleged operator $\hat{L}$ from $\hat{M}$, let us first present the relevant properties of $\hat{M}$.

Clearly, if $f \in \Pp{p}{\boundaryN{\hat{T}}}$, then $\hat{M} f \in \Pp{p}{\hat{T}}$, by item $(2)$ of \cref{Lifting_op_single}.

Let $f \in \Ck{0}{\boundaryN{\hat{T}}}$ and $\hat{N} \in \Nodes(\hat{T})$ be given. For each $(d-1)$-simplex $\hat{\Gamma} \subseteq \hat{T}$, we distinguish between two cases: First, if $\hat{N} \in \hat{\Gamma}$, then $(\hat{L}_{\hat{\Gamma}} f)(\hat{N}) = f(\hat{N})$ by item $(1)$ of \cref{Lifting_op_single}. Second, if $\hat{N} \notin \hat{\Gamma}$, then item $(3)$ of \cref{Lifting_op_single} immediately tells us that $(\hat{L}_{\hat{\Gamma}} f)(\hat{N}) = 0$. Since the total number of $(d-1)$-simplices $\hat{\Gamma} \subseteq \hat{T}$ is given by $d+1$, and since only one of them falls into the second category, we end up with the following identity:
\begin{equation} \label{Lifting_op_Eq_1}
(\hat{M} f)(\hat{N}) = \sum_{\hat{\Gamma}: \, \hat{N} \in \hat{\Gamma}} (\hat{L}_{\hat{\Gamma}} f)(\hat{N}) + \sum_{\hat{\Gamma}: \, \hat{N} \notin \hat{\Gamma}} (\hat{L}_{\hat{\Gamma}} f)(\hat{N}) = d \cdot f(\hat{N}).
\end{equation}

Next, let $k \in \set{0,\dots,d-1}$ and let $f \in \Ck{0}{\boundaryN{\hat{T}}}$ be such that $\restrictN{f}{\hat{\Sigma}} = 0$, for all $k$-simplices $\hat{\Sigma} \subseteq \hat{T}$. Furthermore, let $\hat{\Sigma}^+ \subseteq \hat{T}$ be an arbitrary $(k+1)$-simplex. Considering a $(d-1)$-simplex $\hat{\Gamma} \subseteq \hat{T}$, we distinguish between two cases again: First, if $\hat{\Sigma}^+ \subseteq \hat{\Gamma}$, then $\restrict{\hat{L}_{\hat{\Gamma}} f}{\hat{\Sigma}^+} = \restrictN{f}{\hat{\Sigma}^+}$ by item $(1)$ of \cref{Lifting_op_single}. Second, if $\hat{\Sigma}^+ \not\subseteq \hat{\Gamma}$, then there must hold $\hat{\Sigma}^+ = \convexhull{\hat{\Sigma} \cup \set{\hat{N}}}$, where $\hat{\Sigma} \subseteq \hat{T}$ is a $k$-simplex with $\hat{\Sigma} \subseteq \hat{\Gamma}$ and where $\hat{N} \in \Nodes(\hat{T}) \backslash \hat{\Gamma}$. Since $\restrictN{f}{\hat{\Sigma}} = 0$ by assumption, we obtain from item $(4)$ of \cref{Lifting_op_single} that there must hold $\restrict{\hat{L}_{\hat{\Gamma}} f}{\hat{\Sigma}^+} = 0$. We mention that the first case occurs $d-k-1$ times, since $\hat{\Sigma}^+$ occupies $k+2$ nodes, so that the remaining node in $\Nodes(\hat{T}) \backslash \hat{\Gamma}$ must be one of the $(d+1)-(k+2) = d-k-1$ unoccupied nodes of $\hat{T}$. Altogether, it follows that
\begin{equation} \label{Lifting_op_Eq_2}
\restrict{\hat{M} f}{\hat{\Sigma}^+} = \sum_{\hat{\Gamma}: \, \hat{\Sigma}^+ \subseteq \hat{\Gamma}} \restrict{\hat{L}_{\hat{\Gamma}} f}{\hat{\Sigma}^+} + \sum_{\hat{\Gamma}: \, \hat{\Sigma}^+ \not\subseteq \hat{\Gamma}} \restrict{\hat{L}_{\hat{\Gamma}} f}{\hat{\Sigma}^+} = (d-k-1) \restrictN{f}{\hat{\Sigma}^+}.
\end{equation}

Finally, for all $f \in \Ck{0}{\boundaryN{\hat{T}}}$, we may use item $(5)$ of \cref{Lifting_op_single} to derive a stability bound for the operator $\hat{M}$:
\begin{equation} \label{Lifting_op_Eq_3}
\norm[\Lp{2}{\hat{T}}]{\hat{M} f} \leq \sum_{\hat{\Gamma}} \norm[\Lp{2}{\hat{T}}]{\hat{L}_{\hat{\Gamma}} f} \cleq p^{-1/2} \norm[\Lp{2}{\boundaryN{\hat{T}}}]{f}.
\end{equation}

Our presentation of the auxiliary operator $\hat{M}$ is now finished and we proceed to construct $\hat{L}$ from $\hat{M}$. Let $f \in \Ck{0}{\boundaryN{\hat{T}}}$ be given. We use the restriction operator $\hat{R} := \restrict{\cdot}{\boundaryN{\hat{T}}}$ and the coefficients $c_k := (d-k)^{-1}$, $k \in \set{0,\dots,d-1}$, to define an auxiliary function:
\begin{equation*}
\tilde{f} := (\identity-c_{d-1}\hat{R}\hat{M}) \cdots (\identity-c_1\hat{R}\hat{M}) (\identity-c_0\hat{R}\hat{M}) f \in \Ck{0}{\boundaryN{\hat{T}}}.
\end{equation*}

From \cref{Lifting_op_Eq_1} we know that the function $(\identity-c_0\hat{R}\hat{M}) f \in \Ck{0}{\boundaryN{\hat{T}}}$ vanishes on all $0$-simplices of $\hat{T}$. Then, using \cref{Lifting_op_Eq_2}, we may conclude that $(\identity-c_1\hat{R}\hat{M})(\identity-c_0\hat{R}\hat{M}) f \in \Ck{0}{\boundaryN{\hat{T}}}$ vanishes on all $1$-simplices of $\hat{T}$. Proceeding forwards with \cref{Lifting_op_Eq_2}, we find that $\tilde{f}$ must vanishes on all $(d-1)$-simplices of $\hat{T}$. However, since the $(d-1)$-simplices make up all of $\boundaryN{\hat{T}}$, we have $\tilde{f} = 0 \in \Ck{0}{\boundaryN{\hat{T}}}$. Expanding $\tilde{f}$, we find that, for certain coefficients $\tilde{c}_k \in \R$,
\begin{equation*}
0 = \tilde{f} = f - \sum_{k=1}^{d} \tilde{c}_k (\hat{R}\hat{M})^k f = f - \hat{R}\hat{M} \sum_{k=0}^{d-1} \tilde{c}_{k+1} (\hat{R}\hat{M})^k f.
\end{equation*}

Now, define
\begin{equation*}
\hat{L} f := \hat{M} \sum_{k=0}^{d-1} \tilde{c}_{k+1} (\hat{R}\hat{M})^k f \in \Ck{0}{\hat{T}}.
\end{equation*}

Clearly, $\restrict{\hat{L} f}{\boundaryN{\hat{T}}} = \hat{R}\hat{L} f = f$. Furthermore, since $\hat{R}$ and $\hat{M}$ preserve polynomials, so does $\hat{L}$. Finally, let us derive a bound for $\hat{L} f$ in the case of a polynomial input, $f \in \Pp{p}{\boundaryN{\hat{T}}}$. Since the powers $(\hat{R}\hat{M})^k f$ are polynomials as well, it pays off to have a look at $\hat{R}\hat{M} g$, where $g \in \Pp{p}{\boundaryN{\hat{T}}}$. Using a multiplicative trace inequality, \cite{BS02}, and an inverse inequality (e.g., \cite{Ditzian_Inverse_inequality}), we compute
\begin{equation*}
\norm[\Lp{2}{\boundaryN{\hat{T}}}]{\hat{R}\hat{M} g} = \norm[\Lp{2}{\boundaryN{\hat{T}}}]{\hat{M} g} \cleq \norm[\Lp{2}{\hat{T}}]{\hat{M} g}^{1/2} \norm[\Hk{1}{\hat{T}}]{\hat{M} g}^{1/2} \cleq p \norm[\Lp{2}{\hat{T}}]{\hat{M} g} \stackrel{\cref{Lifting_op_Eq_3}}{\cleq} p^{1/2} \norm[\Lp{2}{\boundaryN{\hat{T}}}]{g}.
\end{equation*}

We conclude the proof with the bound for $\hat{L} f$:
\begin{equation*}
\norm[\Lp{2}{\hat{T}}]{\hat{L} f} \stackrel{\cref{Lifting_op_Eq_3}}{\cleq} p^{-1/2} \sum_{k=0}^{d-1} \norm[\Lp{2}{\boundaryN{\hat{T}}}]{(\hat{R}\hat{M})^k f} \cleq p^{-1/2} \sum_{k=0}^{d-1} p^{k/2} \norm[\Lp{2}{\boundaryN{\hat{T}}}]{f} \cleq p^{(d-2)/2} \norm[\Lp{2}{\boundaryN{\hat{T}}}]{f}.
\end{equation*}

\end{proof}

The next lemma generalizes the results from \cite{Melenk_Rojik} to arbitrary space dimensions $d \geq 1$. The approach taken here is slightly different from \cite{Melenk_Rojik}, since we define the operator $\hat{J}^p$ by induction on $d$. Furthermore, as was pointed out at the beginning of this section, it suffices to consider polynomial inputs $f \in \Pp{p+2}{\hat{T}}$.

\begin{lemma} \label{Rojik_operator}
There exists a linear operator $\fDef{\hat{J}^p}{\Pp{p+2}{\hat{T}}}{\Pp{p}{\hat{T}}}$ with the following properties:
\begin{enumerate}
\item For all $k \in \set{0,\dots,d}$, all $k$-simplices $\hat{\Sigma} \subseteq \hat{T}$ and all $f \in \Pp{p+2}{\hat{T}}$, the quantity $\restrict{\hat{J}^p f}{\hat{\Sigma}}$ is uniquely determined by $\restrictN{f}{\hat{\Sigma}}$.

\item $\hat{J}^p$ is a projection, i.e., $\hat{J}^p f = f$ for all $f \in \Pp{p}{\hat{T}}$.

\item For all $f \in \Pp{p+2}{\hat{T}}$, there hold the following stability and error bounds:
\begin{eqnarray*}
\norm[\Lp{2}{\hat{T}}]{\hat{J}^p f} &\cleq& p^{d(d+1)/4} \norm[\Lp{2}{\hat{T}}]{f}, \\
\norm[\Lp{2}{\hat{T}}]{f-\hat{J}^p f} &\cleq& p^{d(d+1)/4} \inf_{g \in \Pp{p}{\hat{T}}} \norm[\Lp{2}{\hat{T}}]{f-g}.
\end{eqnarray*}

\end{enumerate}
\end{lemma}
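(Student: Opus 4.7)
My plan is to construct $\hat{J}^p$ by induction on the spatial dimension $d$, using the polynomial-preserving lifting $\hat{L}$ from \cref{Lifting_op} to carry boundary data into the interior and correcting in the interior with an $L^2$-orthogonal bubble projection. The base case $d=0$ is trivial: $\hat{T}^0$ is a point, $\Pp{p}{\hat{T}^0} = \R$, and $\hat{J}^p := \identity$ satisfies all three properties.

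For the inductive step, assume the operator $\hat{J}^p_{d-1}$ on $(d-1)$-simplices satisfying (1)--(3) has been constructed. Given $f \in \Pp{p+2}{\hat{T}}$, I would proceed in three substeps. \emph{(i) Face projection:} For every $(d-1)$-face $\hat{\Gamma} \subseteq \hat{T}$, set $g_{\hat{\Gamma}} := \hat{J}^p_{d-1}(\restrictN{f}{\hat{\Gamma}}) \in \Pp{p}{\hat{\Gamma}}$. An easy unrolling of the inductive construction (on each $(d-2)$-face $\hat{\Sigma} \subseteq \hat{\Gamma}$ one has $\restrict{g_{\hat{\Gamma}}}{\hat{\Sigma}} = \hat{J}^p_{d-2}(\restrictN{f}{\hat{\Sigma}})$, which is intrinsic to $\hat{\Sigma}$) shows that the $g_{\hat{\Gamma}}$ agree on common $(d-2)$-faces and glue to a continuous $g \in \Pp{p}{\boundaryN{\hat{T}}}$. \emph{(ii) Lifting:} Set $G := \hat{L}g \in \Pp{p}{\hat{T}}$; by \cref{Lifting_op}, $\restrict{G}{\boundaryN{\hat{T}}} = g$. \emph{(iii) Bubble correction:} Denote by $\mathbb{B}_p := \{v \in \Pp{p}{\hat{T}} : \restrictN{v}{\boundaryN{\hat{T}}} \equiv 0\}$ the polynomial bubble space and by $\Pi_{\mathbb{B}_p}$ the $L^2(\hat{T})$-orthogonal projection onto it; define $\hat{J}^p f := G + \Pi_{\mathbb{B}_p}(f - G)$.

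I would then verify (1)--(3) in turn. Property (1) for a $k$-simplex $\hat{\Sigma} \subseteq \hat{T}$ with $k<d$ follows because on every $(d-1)$-face $\restrict{\hat{J}^p f}{\hat{\Gamma}} = g_{\hat{\Gamma}}$ (the bubble part vanishes on $\boundaryN{\hat{T}}$), and inductively its further restriction to $\hat{\Sigma}$ depends only on $\restrictN{f}{\hat{\Sigma}}$; the case $k=d$ is trivial. Property (2) holds because for $f \in \Pp{p}{\hat{T}}$ the inductive projection gives $g_{\hat{\Gamma}} = \restrictN{f}{\hat{\Gamma}}$ on every face, hence $f - G \in \mathbb{B}_p$ is fixed by $\Pi_{\mathbb{B}_p}$, so $\hat{J}^p f = f$. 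For the stability bound, the triangle inequality and the fact that $\Pi_{\mathbb{B}_p}$ is a contraction give $\norm[\Lp{2}{\hat{T}}]{\hat{J}^p f} \leq \norm[\Lp{2}{\hat{T}}]{f} + 2\norm[\Lp{2}{\hat{T}}]{G}$. I would then chain: the lifting estimate $\norm[\Lp{2}{\hat{T}}]{\hat{L}g} \cleq p^{(d-2)/2} \norm[\Lp{2}{\boundaryN{\hat{T}}}]{g}$ from \cref{Lifting_op}, the inductive stability $\norm[\Lp{2}{\hat{\Gamma}}]{g_{\hat{\Gamma}}} \cleq p^{(d-1)d/4} \norm[\Lp{2}{\hat{\Gamma}}]{\restrictN{f}{\hat{\Gamma}}}$, and a polynomial trace inequality $\norm[\Lp{2}{\hat{\Gamma}}]{\restrictN{f}{\hat{\Gamma}}} \cleq p \norm[\Lp{2}{\hat{T}}]{f}$. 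The exponents telescope via $(d-2)/2 + (d-1)d/4 + 1 = d(d+1)/4$, giving the claimed rate. The error bound then follows from stability and projection by writing $f - \hat{J}^p f = (\identity - \hat{J}^p)(f - q)$ for an arbitrary $q \in \Pp{p}{\hat{T}}$ and taking the infimum.

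The main obstacle is the compatibility of the face projections on common $(d-2)$-faces, which is precisely what locality property (1) is designed to propagate through the induction—a formulation limited to the top-dimensional face, or only to nodes, would not suffice. Once this compatibility is in place, the $p$-arithmetic is a careful but routine bookkeeping that, reassuringly, yields exactly the optimal exponent $d(d+1)/4$.
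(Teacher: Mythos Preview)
Your proposal is correct and follows essentially the same approach as the paper: induction on $d$, face-wise application of $\hat{J}^p_{d-1}$ glued via property~(1), lifting by $\hat{L}$, and an $L^2$-orthogonal bubble correction, with the identical exponent bookkeeping $(d-2)/2 + (d-1)d/4 + 1 = d(d+1)/4$. The only cosmetic differences are that the paper starts the induction at $d=1$ rather than $d=0$ and is slightly more explicit about the affine parametrizations $\gamma_i:\hat{T}^{d-1}\to\hat{\Gamma}_i$ when pulling back to the reference simplex.
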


\begin{proof}
We construct the operator $\hat{J}^p$ via induction on the space dimension $d \geq 1$ and write $\hat{J}^p_1$, $\hat{J}^p_2$, \dots, $\hat{J}^p_d$ for the corresponding operators. As part of the induction argument, we prove item $(1)$, item $(2)$ and the stability bound from item $(3)$. Finally, the error bound is \emph{not} part of the induction, since it follows readily from the projection property and the stability bound.

The case $d=1$: Denote by $\fDef{\hat{L}}{\Ck{0}{\boundaryN{\hat{T}}}}{\Ck{0}{\hat{T}}}$ the polynomial preserving lifting operator from \cref{Lifting_op}. Note that, since $d=1$, we have $\hat{L} f \in \Pp{p}{\hat{T}}$, for \emph{all} $f \in \Ck{0}{\boundaryN{\hat{T}}}$. Let $\PpO{p}{\hat{T}} := \Set{f \in \Pp{p}{\hat{T}}}{\restrictN{f}{\boundaryN{\hat{T}}} = 0}$ and denote by $\fDef{\hat{P}}{\Lp{2}{\hat{T}}}{\PpO{p}{\hat{T}}}$ the orthogonal projection. We define
\begin{equation*}
\forall f \in \Pp{p+2}{\hat{T}}: \quad \quad \hat{J}^p_1 f := \hat{L}(\restrictN{f}{\boundaryN{\hat{T}}}) + \hat{P}(f - \hat{L}(\restrictN{f}{\boundaryN{\hat{T}}})) \in \Pp{p}{\hat{T}}.
\end{equation*}

The identity $\restrict{\hat{J}^p_1 f}{\boundaryN{\hat{T}}} = \restrictN{f}{\boundaryN{\hat{T}}}$, for all $f \in \Pp{p+2}{\hat{T}}$, proves item $(1)$. Since $\hat{P}$ is a projection, so is $\hat{J}^p_1$. Using a multiplicative trace inequality and an inverse inequality, we obtain, for all $f \in \Pp{p+2}{\hat{T}}$, the stability bound
\begin{eqnarray*}
\norm[\Lp{2}{\hat{T}}]{\hat{J}^p_1 f} &\leq& \norm[\Lp{2}{\hat{T}}]{\hat{L}(\restrictN{f}{\boundaryN{\hat{T}}})} + \norm[\Lp{2}{\hat{T}}]{\hat{P}(f - \hat{L}(\restrictN{f}{\boundaryN{\hat{T}}}))} \cleq \norm[\Lp{2}{\hat{T}}]{\hat{L}(\restrictN{f}{\boundaryN{\hat{T}}})} + \norm[\Lp{2}{\hat{T}}]{f} \\
&\stackrel{\cref{Lifting_op}}{\cleq}& p^{-1/2} \norm[\lp{2}{\boundaryN{\hat{T}}}]{f} + \norm[\Lp{2}{\hat{T}}]{f} \cleq p^{-1/2} \norm[\Lp{2}{\hat{T}}]{f}^{1/2} \norm[\Hk{1}{\hat{T}}]{f}^{1/2} + \norm[\Lp{2}{\hat{T}}]{f} \cleq p^{1/2} \norm[\Lp{2}{\hat{T}}]{f}.
\end{eqnarray*}

The step $d-1 \mapsto d$: Assume that an operator $\fDef{\hat{J}^p_{d-1}}{\Pp{p+2}{\hat{T}^{d-1}}}{\Pp{p}{\hat{T}^{d-1}}}$ satisfying items $(1)$, $(2)$ and the stability bound from $(3)$ is well-defined. Furthermore, let us denote the $(d-1)$-subsimplices of $\hat{T}$ by $\hat{\Gamma}_0,\dots,\hat{\Gamma}_d$ and fix affine parametrizations $\fDef{\gamma_i}{\hat{T}^{d-1}}{\hat{\Gamma}_i}$. In order to construct the operator $\fDef{\hat{J}^p_d}{\Pp{p+2}{\hat{T}^d}}{\Pp{p}{\hat{T}^d}}$ from $\hat{J}^p_{d-1}$, we proceed roughly as follows: Given $f \in \Pp{p+2}{\hat{T}}$, we can use $\hat{J}^p_{d-1}$ to find a polynomial $g \in \Pp{p}{\boundaryN{\hat{T}}}$ with $g \approx \restrictN{f}{\boundaryN{\hat{T}}}$. Then, using the polynomial preserving lifting operator $\fDef{\hat{L}}{\Ck{0}{\boundaryN{\hat{T}}}}{\Ck{0}{\hat{T}}}$ from \cref{Lifting_op}, we introduce the quantity $G := \hat{L}g \in \Pp{p}{\hat{T}}$. Clearly, $\restrictN{G}{\boundaryN{\hat{T}}} \approx \restrictN{f}{\boundaryN{\hat{T}}}$, but \emph{not} necessarily $G \approx f$ in all of $\hat{T}$. However, if $\fDef{\hat{P}}{\Lp{2}{\hat{T}}}{\PpO{p}{\hat{T}}}$ denotes the orthogonal projection onto the space of homogeneous polynomials $\PpO{p}{\hat{T}}$, then indeed $\hat{J}^p_d f := G+\hat{P}(f-G) \approx G+(f-G) = f$ on $\hat{T}$. 

In order to work out the details, let $f \in \Pp{p+2}{\hat{T}}$ be given. Then, for each $i \in \set{0,\dots,d}$, $f \circ \gamma_i \in \Pp{p+2}{\hat{T}^{d-1}}$, so that the polynomial $\hat{J}^p_{d-1}(f \circ \gamma_i) \in \Pp{p}{\hat{T}^{d-1}}$ is well-defined by the induction hypothesis. We define boundary data $\fDef{g}{\boundaryN{\hat{T}}}{\R}$ in a piecewise manner:
\begin{equation*}
\forall i \in \set{0,\dots,d}: \quad \quad \restrictN{g}{\hat{\Gamma}_i} := \hat{J}^p_{d-1}(f \circ \gamma_i) \circ \gamma_i^{-1} \in \Pp{p}{\hat{\Gamma}_i}.
\end{equation*}
(Note that $\gamma_i$ is injective and thus invertible on its range, $\hat{\Gamma}_i$.)

We argue that $g \in \Pp{p}{\boundaryN{\hat{T}}}$: Consider the boundary $\hat{\Sigma}_{ij} := \hat{\Gamma}_i \cap \hat{\Gamma}_j \subseteq \hat{T}$ between any two $(d-1)$-simplices $\hat{\Gamma}_i,\hat{\Gamma}_j$ and note that $\hat{\Sigma}_{ij}$ is a $(d-2)$-simplex. Then, the pre-images $\gamma_i^{-1}(\hat{\Sigma}_{ij}), \gamma_j^{-1}(\hat{\Sigma}_{ij}) \subseteq \hat{T}^{d-1}$ are $(d-2)$-simplices as well. Using item $(1)$ of the induction hypothesis, we know that $\restrict{\hat{J}^p_{d-1}(f \circ \gamma_i)}{\gamma_i^{-1}(\hat{\Sigma}_{ij})}$ is uniquely determined by $\restrict{f \circ \gamma_i}{\gamma_i^{-1}(\hat{\Sigma}_{ij})}$ and that $\restrict{\hat{J}^p_{d-1}(f \circ \gamma_j)}{\gamma_j^{-1}(\hat{\Sigma}_{ij})}$ is uniquely determined by $\restrict{f \circ \gamma_j}{\gamma_j^{-1}(\hat{\Sigma}_{ij})}$. However, since $\restrict{f \circ \gamma_i}{\gamma_i^{-1}(\hat{\Sigma}_{ij})} = \restrictN{f}{\hat{\Sigma}_{ij}} = \restrict{f \circ \gamma_j}{\gamma_j^{-1}(\hat{\Sigma}_{ij})}$, there must hold $\restrict{\hat{J}^p_{d-1}(f \circ \gamma_i)}{\gamma_i^{-1}(\hat{\Sigma}_{ij})} = \restrict{\hat{J}^p_{d-1}(f \circ \gamma_j)}{\gamma_j^{-1}(\hat{\Sigma}_{ij})}$. It follows that $\restrictN{\restrictN{g}{\hat{\Gamma}_i}}{\hat{\Sigma}_{ij}} = \restrictN{\restrictN{g}{\hat{\Gamma}_j}}{\hat{\Sigma}_{ij}}$, i.e., that $g \in \Ck{0}{\boundaryN{\hat{T}}}$. According to \cref{Space_Pp_boundary}, it follows that $g \in \Pp{p}{\boundaryN{\hat{T}}}$.

Furthermore, to get a stability estimate for $g$, we can use item $(3)$ of the induction hypothesis:
\begin{eqnarray*}
\norm[\Lp{2}{\boundaryN{\hat{T}}}]{g} &\cleq& \sum_{i=0}^{d} \norm[\Lp{2}{\hat{\Gamma}_i}]{\hat{J}^p_{d-1}(f \circ \gamma_i) \circ \gamma_i^{-1}} \cleq \sum_{i=0}^{d} \norm[\Lp{2}{\hat{T}^{d-1}}]{\hat{J}^p_{d-1}(f \circ \gamma_i)} \\
&\stackrel{(3)}{\cleq}& p^{(d-1)d/4} \sum_{i=0}^{d} \norm[\Lp{2}{\hat{T}^{d-1}}]{f \circ \gamma_i} \cleq p^{(d-1)d/4} \norm[\Lp{2}{\boundaryN{\hat{T}}}]{f}.
\end{eqnarray*}

We proceed as stated above and lift $g$ from $\boundaryN{\hat{T}}$ into $\hat{T}$. From \cref{Lifting_op}, we know that the function $G := \hat{L} g$ satisfies
\begin{equation*}
\restrictN{G}{\boundaryN{\hat{T}}} = g, \quad \quad \quad G \in \Pp{p}{\hat{T}}, \quad \quad \quad \norm[\Lp{2}{\hat{T}}]{G} \cleq p^{(d-2)/2} \norm[\Lp{2}{\boundaryN{\hat{T}}}]{g}.
\end{equation*}

Now, recalling that $\fDef{\hat{P}}{\Lp{2}{\hat{T}}}{\PpO{p}{\hat{T}}}$ denotes the orthogonal projection, consider the function
\begin{equation*}
\hat{J}^p_d f := G + \hat{P}(f-G) \in \Pp{p}{\hat{T}}.
\end{equation*}
Clearly, the mapping $f \mapsto \hat{J}^p_d f$ defines a linear operator $\fDef{\hat{J}^p_d}{\Pp{p+2}{\hat{T}}}{\Pp{p}{\hat{T}}}$. To prove item $(1)$, let $k \in \set{0,\dots,d}$ and consider a $k$-simplex $\hat{\Sigma} \subseteq \hat{T}$. If $k=d$, the statement becomes trivial. If $k \leq d-1$, then there exists a $(d-1)$-simplex $\hat{\Gamma}_i \subseteq \hat{T}$ such that $\hat{\Sigma} \subseteq \hat{\Gamma}_i \subseteq \boundaryN{\hat{T}}$. Since $\hat{P}(f-G)$ vanishes on $\boundaryN{\hat{T}}$, we find that
\begin{equation*}
\restrict{\hat{J}^p_d f}{\hat{\Sigma}} = \restrictN{G}{\hat{\Sigma}} = \restrictN{g}{\hat{\Sigma}} = \hat{J}^p_{d-1}(f \circ \gamma_i) \circ (\restrictN{\gamma_i^{-1}}{\hat{\Sigma}}).
\end{equation*}
Item $(1)$ of the induction hypothesis tells us that this function is uniquely determined by $\restrict{f \circ \gamma_i}{\gamma_i^{-1}(\hat{\Sigma})}$, i.e., by $\restrictN{f}{\hat{\Sigma}}$.

As for the projection property of $\hat{J}^p_d$, consider an input $f \in \Pp{p}{\hat{T}}$. Then, $g = \restrictN{f}{\boundaryN{\hat{T}}} \in \Pp{p}{\boundaryN{\hat{T}}}$, since $\hat{J}^p_{d-1}$ is a projection by the induction hypothesis. It follows that $f-G \in \PpO{p}{\hat{T}}$ so that $\hat{J}^p_d f = G + \hat{P}(f-G) = G + (f-G) = f$.

Finally, for all $f \in \Pp{p+2}{\hat{T}}$, a multiplicative trace inequality and an inverse inequality give us the desired stability estimate:
\begin{equation*}
\begin{array}{rclcl}
\norm[\Lp{2}{\hat{T}}]{\hat{J}^p_d f} &\leq& \norm[\Lp{2}{\hat{T}}]{G} + \norm[\Lp{2}{\hat{T}}]{\hat{P}(f-G)} &\leq& \norm[\Lp{2}{\hat{T}}]{G} + \norm[\Lp{2}{\hat{T}}]{f-G} \\
&\cleq& \norm[\Lp{2}{\hat{T}}]{f} + \norm[\Lp{2}{\hat{T}}]{G} &\cleq& \norm[\Lp{2}{\hat{T}}]{f} + p^{(d-2)/2} \norm[\Lp{2}{\boundaryN{\hat{T}}}]{g} \\
&\cleq& \norm[\Lp{2}{\hat{T}}]{f} + p^{(d-2)/2 + (d-1)d/4} \norm[\Lp{2}{\boundaryN{\hat{T}}}]{f} &\cleq& \norm[\Lp{2}{\hat{T}}]{f} + p^{(d-2)/2 + (d-1)d/4} \norm[\Lp{2}{\hat{T}}]{f}^{1/2} \norm[\Hk{1}{\hat{T}}]{f}^{1/2} \\
&\cleq& p^{(d-2)/2 + (d-1)d/4 + 1} \norm[\Lp{2}{\hat{T}}]{f} &=& p^{d(d+1)/4} \norm[\Lp{2}{\hat{T}}]{f}.
\end{array}
\end{equation*}

This finishes the proof.

\end{proof}

We close this section with the delayed proof of \cref{Melenk_Rojik}.
\begin{proof}[Proof of \cref{Melenk_Rojik}]
Denote by $\fDef{\hat{J}^p}{\Pp{p+2}{\hat{T}}}{\Pp{p}{\hat{T}}}$ the operator from \cref{Rojik_operator}. We define the asserted operator $\fDef{J_{\Elements}^p}{\Skp{0}{p+2}{\Elements}}{\Skp{0}{p}{\Elements}}$ in an elementwise fashion: For every $v \in \Skp{0}{p+2}{\Elements}$ and every element $T \in \Elements$, we set
\begin{equation*}
\restrict{J_{\Elements}^p v}{T} := \hat{J}^p(v \circ F_T) \circ F_T^{-1}.
\end{equation*}
(Recall from \cref{Mesh} that $\fDef{F_T}{\hat{T}}{T}$ is the affine transformation between $\hat{T}$ and $T$.)

The preservation of continuity and boundary values follows from item $(1)$ in \cref{Rojik_operator}. The preservation of supports is obvious from the elementwise definition. Finally, to see the error bound, let $\kappa \in \Pp{1}{\hat{T}}$ and $u \in \Pp{p}{\hat{T}}$. Then, using an inverse inequality once again, we obtain
\begin{equation*}
\begin{array}{rclcl}
\norm[\Hk{1}{\hat{T}}]{(\identity-\hat{J}^p)(\kappa^2 u)} &\cleq& p^2 \norm[\Lp{2}{\hat{T}}]{(\identity-\hat{J}^p)(\kappa^2 u)} &\stackrel{\cref{Rojik_operator}}{\cleq}& p^{d(d+1)/4+2} \inf_{g \in \Pp{p}{\hat{T}}} \norm[\Lp{2}{\hat{T}}]{\kappa^2 u - g} \\
&\leq& p^{d(d+1)/4+2} \norm[\Lp{2}{\hat{T}}]{\kappa^2 u - \kappa(0)^2 u} &\stackrel{\mathrm{Taylor}}{\cleq}& p^{d(d+1)/4+2} \seminorm[\Wkp{1}{\infty}{\hat{T}}]{\kappa^2} \norm[\Lp{2}{\hat{T}}]{u}.
\end{array}
\end{equation*}

Item $(3)$ of \cref{Melenk_Rojik} then follows with the standard scaling relation $\h{T}^l \seminorm[\Wkp{l}{q}{T}]{v} \ceq \h{T}^{d/q} \seminorm[\Wkp{l}{q}{\hat{T}}]{v \circ F_T}$. In fact, if $\hat{\kappa} := \kappa \circ F_T$ and $\hat{u} := u \circ F_T$ denote the pull-backs of $\kappa$ and $u$, then
\begin{eqnarray*}
\sum_{l=0}^{1} \h{T}^l \seminorm[\Hk{l}{T}]{(\identity-J_{\Elements}^p)(\kappa^2 u)} &\cleq& \h{T}^{d/2} \norm[\Hk{1}{\hat{T}}]{(\identity-\hat{J}^p)(\hat{\kappa}^2 \hat{u})} \cleq p^{d(d+1)/4+2} \h{T}^{d/2} \seminorm[\Wkp{1}{\infty}{\hat{T}}]{\hat{\kappa}^2} \norm[\Lp{2}{\hat{T}}]{\hat{u}} \\
&\ceq& p^{d(d+1)/4+2} \h{T} \seminorm[\Wkp{1}{\infty}{T}]{\kappa^2} \norm[\Lp{2}{T}]{u}.
\end{eqnarray*}
This concludes the proof of \cref{Melenk_Rojik}.

\end{proof}

\section{Numerical results}

In this final section, we illustrate the validity of \cref{Main_result} with two numerical examples in $d=2$ space dimensions.
The domain $\Omega := (0,1) \times (0,1) \subseteq \R^2$ is triangulated with a mesh $\Elements$ with exponential grading towards the left edge $\Gamma := \set{0} \times [0,1]$ (cf. \cref{SSec_The_main_result}). Each element $T \in \Elements$ satisfies $\h{T} \ceq \dist[2]{x_T}{\Gamma}^{1-1/\alpha} H$, where $\alpha = \infty$ and $H = 0.25$.

We start with the special case $p=1$.   The system matrix $\mvemph{A} \in \R^{N \times N}$ is assembled and explicitly inverted using MATLAB's built-in inversion routine $\texttt{inv(\dots)}$. Then, for each rank bound $r \in \set{1,\dots,15}$, an approximation $\mvemph{B}_r \in \HMatrices{\BPart}{r}$ to $\mvemph{A}^{-1}$ is computed via blockwise truncated singular values decompositions. As was discussed in more detail in \cite[Section 4]{Angleitner_H_matrices_FEM}, this procedure gives rise to the \emph{computable} error bound
\begin{equation*}
\norm[2]{\mvemph{A}^{-1}-\mvemph{B}_r} \cleq \depth{\Tree{N \times N}} \cdot \max_{(I,J) \in \BPart} \sigma_{r+1}(\restrictN{\mvemph{A}^{-1}}{I \times J}).
\end{equation*}

\begin{figure}[H]
\begin{center}
\includegraphics[width=\textwidth, trim=0cm 0cm 0cm 0cm]{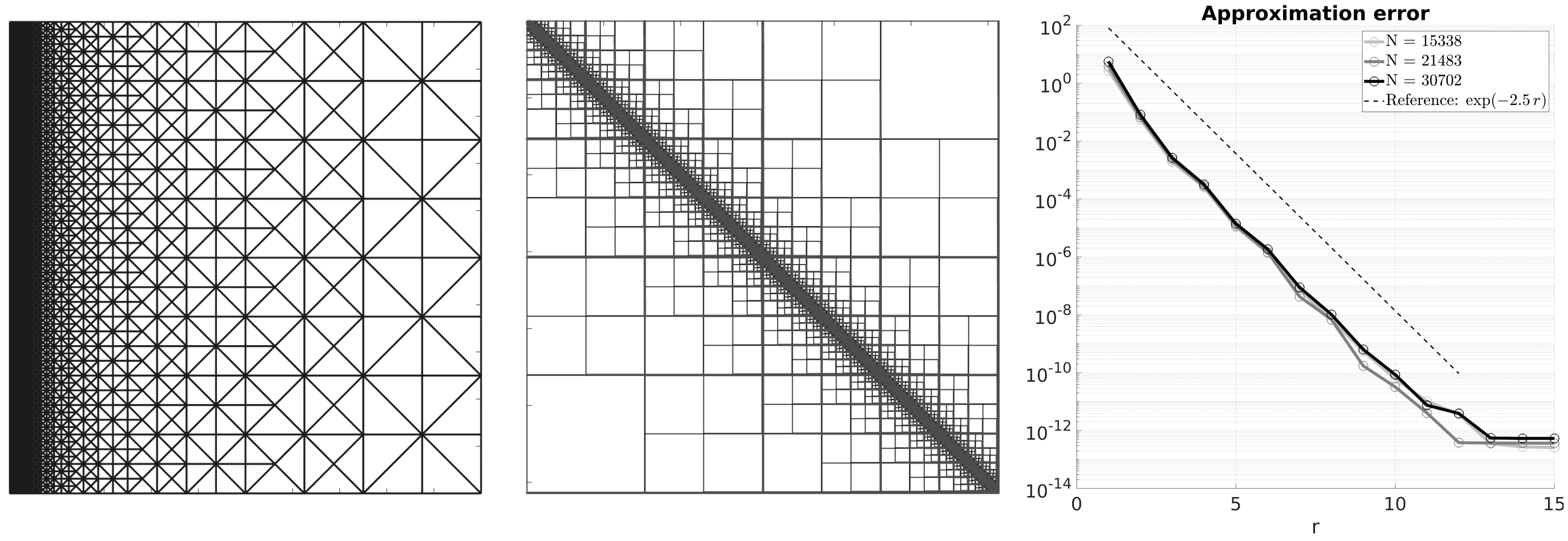}
\caption{Left: The mesh $\Elements$. Center: The block partition $\BPart$. Right: Empirical approximation errors.}
\label{Num_example_1}
\end{center}
\end{figure}

The right-hand image in \cref{Num_example_1} depicts a comparison between three different problem sizes of roughly $N \approx 15.000$, $N \approx 21.500$ and $N \approx 31.000$ degrees of freedom. The error appears to decline at a rate of $\exp(-2.5 r)$, which is even better than our theoretical prediction $\exp(-\CExp r^{1/3})$ from \cref{Main_result}.

\begin{figure}[H]
\begin{center}
\includegraphics[width=0.5\textwidth, trim=0cm 0cm 0cm 0cm]{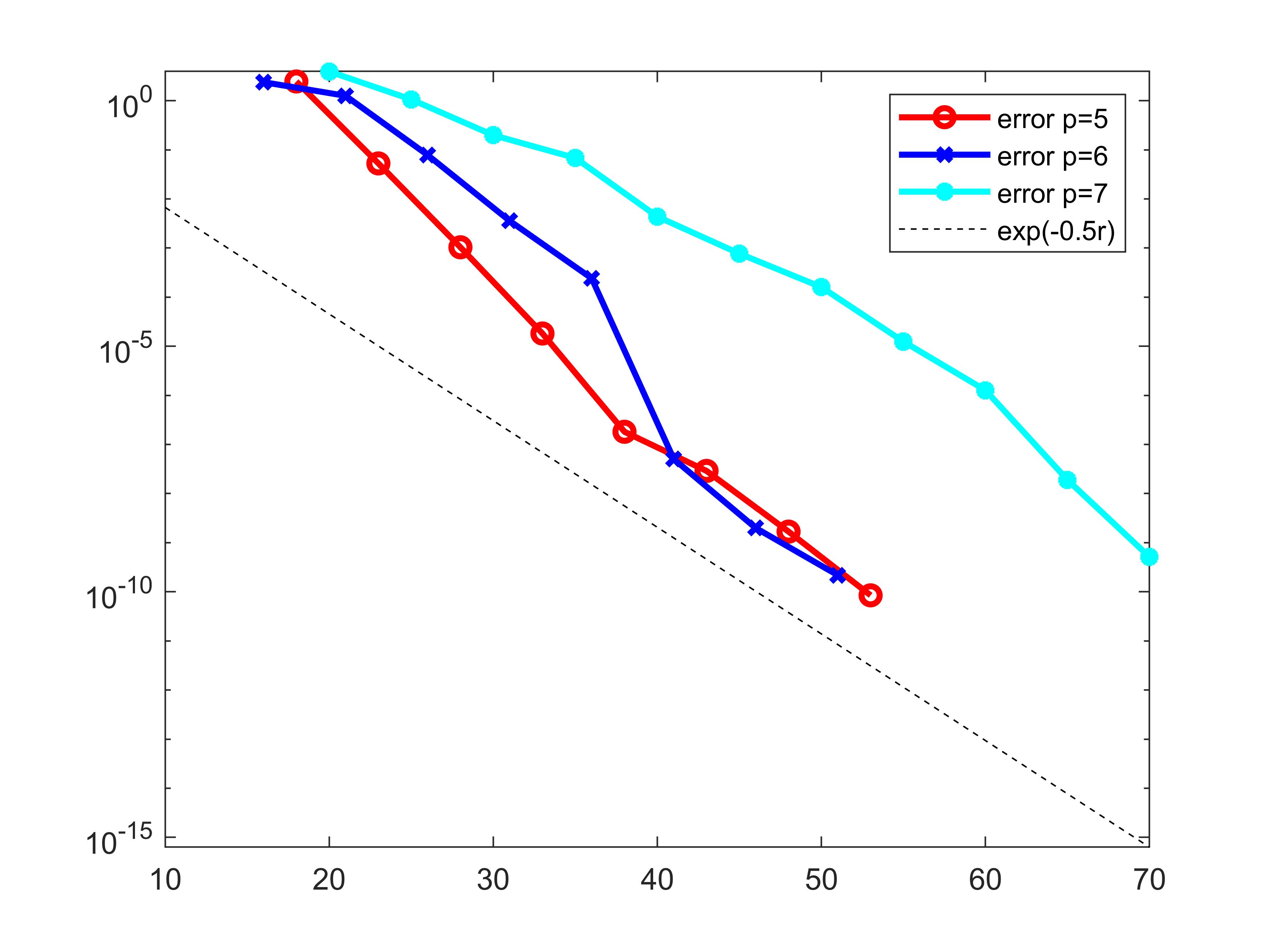} 
\caption{Exponential convergence of $\mathcal{H}$-matrix approximations for $p=5,6,7$.}
\label{Figure_bcFEM}
\end{center}
\end{figure}

With the previously defined exponentially graded mesh towards the edge $x=0$, we also compute an example with higher polynomial degrees on each element. 
We employ a combination of the finite element code {\tt NGSolve} (which is capable of higher order polynomials), \cite{NGSolve}, and the C++ $\mathcal{H}$-matrix library, \cite{H2Lib}. Hereby, both codes are coupled using a code also employed in \cite{Rieder}. 
We use polynomial degrees $p=5$ (which leads to a problem size of $N=5791$), $p=6$ (which leads to a problem size of $N=17053$) and $p=7$ (which leads to a problem size of $N=46915$). The $\mathcal{H}$-matrix approximations are computed using $\mathcal{H}$-Cholesky decompositions and then inverting the Cholesky factors. In order to avoid computing the full inverse matrix, we compute the error measure $\norm[2]{\mvemph{I} - (\mvemph{C}_{\mathcal{H}}\mvemph{C}_{\mathcal{H}}^T)^{-1} \mvemph{A}}$, which is an upper bound for the relative error.

Figure~\ref{Figure_bcFEM} shows exponential convergence of the error measure as predicted by our main result.

\bibliographystyle{amsalpha}
\bibliography{literature}

\end{document}